\newtheorem{teo}	{Theorem}[section]
\newtheorem*{mteo}	{Main Theorem}
\newtheorem*{teoa}	{Theorem A}
\newtheorem{defi}	{Definition}
\newtheorem{pr}		{Proposition}[section]
\newtheorem{lm}		{Lemma}[section]
\newtheorem{cor}	{Corollary}[section]
\newtheorem{rem}	{Remark}
\newcommand{\N}		{\mathbb{N}}
\newcommand{\Z}		{\mathbb{Z}}
\newcommand{\R}		{\mathbb{R}}
\newcommand{\D}		{\mathbb{D}}
\newcommand{\C}		{\mathbb{C}}
\newcommand{\Cb}	{\overline{\mathbb{C}}}
\newcommand{\Ctwo}	{{\mathbb{C}}^2}
\renewcommand{\P}	{\mathbb{P}}
\newcommand{\Hc}	{{\mathring{H} }}
\newcommand{\bpl}	{\bigl\{ }
\newcommand{\bpr}	{\bigr\} }
\newcommand{\ww}  [2]	{W^{#1}({#2})}
\newcommand{\www} [3]	{W^{#1}_{#3}({#2})}
\newcommand{\graph}	{{\rm graph\, }}
\newcommand{\crit}	[1]	{{\rm Crit(#1)\, }}
\newcommand{\cv}	[1]	{{\rm CVal(#1)\, }}
\newcommand{\reg}	{\mathcal{R}}
\newcommand{\Isom}	{{\rm Isom}}
\newcommand{\supp}	{{\rm supp\, }}
\renewcommand{\exp}	{{\rm exp}}
\renewcommand{\mod}	{{\rm mod}}
\begin{document}

\title{\bf On Critical Point for Two Dimensional Holomorphics Systems}

\author{Francisco Valenzuela--Henr\'{\i}quez\thanks{Part of this work was supported by CNPq--IMPA, by PEC--PG, and by Proyecto Interno Postdoctorado 2011 of PUCV.} \footnote{Instituto de Matem\'atica, Pontificia Universidad Cat\'olica de Valpara{\'{\i}}so, Blanco Viel 596, Cerro Bar\'on, Valpara{\'{\i}}so, Chile.}\\
\texttt{\small{francisco.valenzuela@ucv.cl}}}

\date{}

\maketitle

\begin{abstract}
Let $f:M\rightarrow M$ be a biholomorphisms on two--dimensional a complex manifold , and let $X\subseteq M$ be a compact $f$--invariant set such that $f|X$ is asymptotically dissipative and without sinks periodic points. We introduce a solely dynamical obstruction to dominated splitting, namely critical point. Critical point is a dynamical object and capture many of the dynamical properties of their one--dimensional counterpart.
\end{abstract}

\tableofcontents

\section{Introduction}\label{sec:intro}

Interested in a global understanding of holomorphic dynamics in higher dimensions, the polynomial automorphisms of  $\C^2$ known as generalized H\'enon maps (or complex H\'enon maps), appear in a natural way in this study.

These maps were a subject
of serious study with foundational work carried out by J. Hubbard \cite{h}, J. Hubbard and R. Oberste--Vorth \cite{ho1,ho2}, E. Bedford and J. Smillie
\cite{bs1,bs2,bs3}, E. Bedford, M. Lyubich and J. Smillie \cite{bls1}, S. Friedland and J. Milnor in \cite{fm}, J. Forn\ae{}ss and N. Sibony \cite{fs1}, among others.

Complex H\'{e}non maps are obtained as a finite composition of maps
of the form $(y,p(y)-bx)$ where $p$ is a polynomial of degree at least two and
$b \in \Bbb{C}^*$. As in the case of rational maps in the one--dimensional context, complex H\'enon maps have a well--defined Julia set $J$. % (see \cite{bs1}).
This set is a compact invariant set that contains the support of the unique measure of maximal entropy. (See \cite{bs1}.) We will denote the support of the measure of maximal entropy  by $J^*$.

A significant open question in the study of complex H\'{e}non maps is whether $J = J^*$. Bedford and Smillie~\cite{bs1} show that if $f$ is uniformly
hyperbolic on $J$, then $J=J^*$. Further, Forn\ae{}ss~\cite{F} shows that if $f$ is uniformly hyperbolic on $J^*$, and  is not volume preserving,
then $J=J^*$. In the setting of complex H\'enon maps, hyperbolicity is the natural generalization of the expansiveness on the Julia set for rational maps
on $\Cb$. 

We recall that dominated splitting is a weaker version of hyperbolicity. In this setting, it is reasonable to look the property of dominated splitting, to verify the possible hyperbolicity. Recently in our work \cite{pancho01}, was given several equivalences of uniform hyperbolicity, under the hypothesis of dominated splitting in the Julia set. Then a natural question appear:

\smallskip

\noindent
{\bf Question:}\, \, {\it When the set $J/J\sp*$  has dominated splitting?.}

\smallskip

In the context of real and complex one--dimensional mapping, this phenomena is already known: for one--dimensional endomorphism, one obstruction to hyperbolicity is the presence of critical points (points with zero derivative) in the limit set. In fact, in the real context, Ma\~n\'e showed that smooth and generic (Kupka--Smale) one--dimensional endomorphisms without critical points are either hyperbolic or conjugate to an irrational rotation (see \cite{ma2}). So we could say that, for generic smooth one--dimensional endomorphisms, any compact invariant set is hyperbolic if, and only if, it does not contain critical points.
In the complex  case (rational maps), it is well known that the Julia set $J$ is hyperbolic if, and only if, $J$ is disjoint of the post critical set. We recall that in dimension one, hyperbolicity and dominated splitting is the same notion.

{\bf Our main goal:} is to introduce the dynamical obstruction to dominated splitting for two--dimensional biholomorphisms in a complex manifolds. This allow us to introduce one notion of {\bf critical point} for complex H\'enon maps, that capture many of the dynamical properties of their one--dimensional counterpart.

One notion of critical point on surfaces, was introduced by E. Pujals and F. Rodrigues Hertz in their work \cite{P-RH}. They works with systems that are dissipative in a compact invariant set without sink periodic point. The main result of \cite{P-RH} state that $C\sp2$--generically a systems has dominated splitting, if and only if, the set of critical point is empty. From Theorem B of Pujals--Sambarino in \cite{ps}, the authors of \cite{P-RH} conclude that: {\it Generically, an invariant set is either an hyperbolic set or an normally hyperbolic closed curve which dynamics is conjugate to an irrational rotation if and only if the set of critical points is empty}. We remark that in \cite{P-RH}, the authors performs the proof of their main result, using Theorem B on \cite{ps}. Later, S. Crovisier in \cite{C}, was give one prove of the main result on \cite{P-RH}, independent of the  Pujals--Sambarino's Theorem.

We will make a first presentation of our main result, in the context of complex H\'enon maps. To introduce the notion of critical point we look for the projective action of the derivative of the map. More precisely, let $f$ be a dissipative complex H\'enon map, i.e. $|\det(df_x)|=b<1$. Let $F_x$ be the M${\ddot{\rm o}}$bius transformation induced by $df_x$. We denote the spherical norm of the derivative of $F_x$ at the point $\xi\in\Cb$ by $||F_x\sp\prime(\xi)||$.

Let $b<\beta_+\le\beta_-<1$ and $\beta=(\beta_-,\beta_+)$. We say that $x\in J$ is a {\bf $\beta$--critical point} if there exists a direction $\xi$ such that $||(F\sp{\pm n}_x)\sp\prime(\xi)||\ge \beta_\pm\sp{\pm n}$, for each $n\ge0$. We denote the set of all $\beta$--critical points by $\crit\beta$. The preceding definition assert that a point is critical, if there exist a (projective) direction that is expanded (in norm) to the past by the action of $F$, and is not very contracted to the future. We recall that this definition generalize in the complex case, the notion of critical point in \cite{P-RH}, that turn, and quoting the words of \cite{P-RH} authors, {\it ``\ldots (a critical set)\ldots goes back to the seminal studies done for H\'enon attractor in \cite{B-C}''}. Also recall that this definition is adapted to the dissipative context.

Our main result can be now stated as follows:

\begin{mteo}
Let $f$ be a dissipative complex H\'enon map, with  $|\det(df_x)|=b<1$. Then $J$ has dominated splitting if and only if for every $\beta=(\beta_-,\beta_+)$ where $b<\beta_+\le\beta_-<1$, the set $\crit\beta$ is an empty set.
\end{mteo}

This Theorem is consequence of a more general version of this result, stated for complex linear cocycles, namely {\bf Theorem A}. An hypothesis necessary both in the surfaces, and Theorem A version, is the absence of sinks. Since Julia set only contain periodic saddle points, this hypothesis not appear in the statement of Main Theorem. For now, there are not a Pujals--Sambarino's Theorem in the two--dimensional complex case. The way to prove Theorem A is adapt in our context the main ideas of Crovisier on \cite{C}. However, since the definition of critical point in \cite{P-RH}, and our definition are distinct, the adaptation of this ideas, have several differences with the original version.

In this point, we explain several properties relatively to the critical set (See Subsection 6.2). Firstly, the critical set is a compact set. Introducing a partial order in the set of indexes $\beta$  (we say that $\beta\ge\alpha$ if and only if $\beta_\pm\sp{\pm1}\ge\alpha_\pm\sp{\pm1}$), we have the monotony contention: if $\beta\ge\alpha$ then $\crit\beta\subseteq\crit\alpha$. Also, under change of (hermitian) metric and conjugation, critical point are preserved, maybe after of a finite but bounded iterates to the past or to the future. The critical set is far from dominated/hyperbolic sets and Pesin's Blocks. We recall that in the polynomial case, positive iterates of a critical point can still be critical, but not all element of the whole positive orbit is a critical point. This property also holds for our context: in the orbit of a critical point, there exists a {\it ``distinguished''}\,  critical point that is the last critical point (last in the point of view of the positive orbit). Critical point is not a regular point in the Osceledets sense. Finally, orbit of a tangencie between the stable and the unstable manifold of a periodic point, contain a critical point.

The paper is organized as follows:

In Section 2, we state result and give the tools related to complex linear cocycles (the action of $df$ in the tangent of $J$)  and projective cocycles (the action of $F$ in the spherical bundle of $J$).

Section 3 is devoted to state the notion of dominated splitting for linear cocycles. We give several tools in terms of the projective cocycle, that are equivalent to the existence of dominated splitting.

In Section 4, we define formally the notion of critical point and state the general version of our Main Theorem (Theorem A). Also we state the notions necessaries to prove our Theorem A.

Section 5, is devoted to proving Theorem A.

Section 6, we study several properties of critical points. More over, in subsection 6.5, we conjecture (for complex H\'enon maps) the existence of another ``critical points'' outside of $J$, in order to obtain a two dimensional counterpart for the classical one dimensional Theorem  about rational maps:\, {\it If the post--critical is disjoint from the Julia set, them the Julia set is hyperbolic.}

\bigskip

\noindent {\it Acknowledgements.--}\, This paper is part of my thesis, IMPA 2009. I want to thank my advisor Enrique R. Pujals for his significant orientation and his constant encouragement while carrying out thesis work. I am
particularly grateful to IMPA for its support during my doctorate studies. While preparing this work I enjoyed the hospitality of both the Pontificia
Universidad Cat\'olica de Valpara\'{\i}so and the Universidad Pontificia Universidad Cat\'olica de Chile in Santiago city. I wish to give a special thank to Juan Rivera Lettelier (PUC) for several suggestions in order to obtain a final definition of critical point.

\section{Preliminaries}\label{s:preliminaries}

\subsection{Bundles and Cocycles}\label{ss:bundles}

Let $X$ be a topological compact space. We let $TX$ denote a locally trivial vector bundle of complex dimension 2 over $X$. We denote by $T_z$ the fiber of $TX$ at $z\in X$, and denote by $pr:TX\rightarrow X$ the natural projection.
A {\bf linear cocycle} $A:TX\rightarrow TX$ is a continuous isomorphism in the category of the vector bundles. More precisely, $A$ is continuous and there exists $f:X\rightarrow X$ an homeomorphism such that $A_z=A|T_z:T_z\rightarrow T_{f(z)}$ is a complex isomorphisms. We say that the homeomorphism
$f$ is the {\bf base} of the cocycle $A$.

\smallskip

Given a vector bundle $PX$ we define its {\bf projective
bundle} as the set $\P(X)=\cup_{z\in X}\bpl
z\bpr\times\C\P^1(T_z)$. The projective bundle is a bundle which fiber is the Riemann sphere $\Cb$. Denote the canonical projections by $p:TX^*\rightarrow\P(X)$, where $TX^*=TX\setminus\bpl \textrm{the zero section}\bpr$. In what follows, we denote by $\Cb_z$ the fiber of $\P(X)$ at $z\in X$. Given $A$ a linear cocycle with base
$id$, $p_A$ denotes the map from $TX^*$ to $\P(X)$ satisfying $p_A|_{T_z^*}=p\circ A_z$.

\smallskip

Similarly as in the vector bundle case, we say that $M:\P(X)\rightarrow\P(X)$ is a {\bf projective cocycle}, if it is a continuous isomorphisms in the category of bundles with projective fibre. Given a linear cocycle $A$ we can associate to it a projective cocycle $M$ in a natural fashion as $M_z([v])=[A_zv]$, where $[\cdot]$ denote the equivalence class in the projective space.

\smallskip

Given a non--negative integer $l$ we define the iterate of the cocycle $A$ by the equation
\[A^l_z=A_{f^{l-1}(z)}\circ\cdots\circ A_{f(z)}\circ A_z\, \textrm{
and }\, A^{-l}_z=A^{-1}_{f^{-l}(z)}\circ\cdots\circ
A^{-1}_{f^{-2}(z)}\circ
A^{-1}_{f^{-1}(z)},\] and define $A\sp0=Id$ by convention. In the same way we define the iterates $M\sp l_z$ and $ M^{-l}_z$ for the projective cocycle.

\smallskip

We have well--defined an hermitian (a spherical) metric in the linear (projective) bundle. Let $TX\odot TX$ be the subset of $TX\times TX$ consisting of pairs $(u,v)$ such that $u$ and $v$ are in the same fiber. An {\bf hermitian metric} on $TX$ is a continuous function $(\cdot|\cdot):TX\odot TX\rightarrow\C$ such that $(\cdot|\cdot)|T_z\times T_z=(\cdot|\cdot)_z$ is an hermitian product in $T_z$. Since $X$ is compact, there exists an hermitian metric on $TX$ (cf. \cite{H}). In what follows, we denote $||v||_z=(v|v)_z$.

\smallskip

We also have the following statement.

\smallskip

\begin{defi}
The {\bf spherical metric} in the projective bundle $\P(X)$, is the metric induced by the hermitian metric in $TX$.
\end{defi}

\smallskip

For see the formal construction of the previous definition, we suggest to the reader consult the Appendix A.

\subsection{Oseledets Theorem}

We say that a point $z\in X$ is a {\bf regular point of $A$}, if the fiber $T_z$ admits a splitting $T_z=E_z\oplus F_z$ of one dimensional complex subspaces, and numbers $\lambda^-(x)\leq\lambda^+(z)$ satisfying
\[\lim_{n\rightarrow\pm\infty}\frac{1}{n}
\log(||A^n_zu||)=\pm\lambda^-(z)\, \, \, \,
\textrm{and}\, \, \, \, \lim_{n\rightarrow\pm\infty}\frac{1}{n}
\log(||A^n_zv||)=\pm\lambda^+(z),\]
where $u\in E_z\setminus\bpl 0\bpr$ and $v\in F_z\setminus\bpl 0\bpr$. Recall that a set $S\subset X$ has {\bf total probability in $X$}, if for every $f$-invariant measure $\mu$, we have $\mu(S^c)=0$.

\smallskip

\begin{teo}{\bf (Oseledets)}
The set of regular points of $A$ has total probability. Moreover, $z\mapsto E_z$ and $z\mapsto F_z$ are measurable subbundles and the functions $z\mapsto\lambda^\pm(z)$ are measurable.
\end{teo}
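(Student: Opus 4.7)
The plan is to derive the theorem from Kingman's subadditive ergodic theorem combined with a gap argument for the splitting, exploiting the fact that the fibers $T_z$ have complex dimension two so that the relevant growth information is encoded in just two singular values.

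First I would fix an arbitrary $f$--invariant Borel probability measure $\mu$ on $X$ and apply Kingman's subadditive ergodic theorem to $\phi_n(z)=\log\|A^n_z\|$, the operator norm being that induced by the hermitian metric on $TX$. Subadditivity follows from the cocycle identity $A^{n+m}_z=A^n_{f^m z}\circ A^m_z$ and submultiplicativity of the operator norm, while $L^1$--integrability is immediate since $A$ is continuous and $X$ compact. This yields a measurable function $\lambda^+(z)=\lim_{n\to+\infty}\frac{1}{n}\log\|A^n_z\|$ defined $\mu$--a.e. Next, Birkhoff's ergodic theorem applied to the continuous additive cocycle $\log|\det A_z|$ produces a measurable $\Lambda(z)=\lim_{n}\frac{1}{n}\log|\det A^n_z|$, and I would set $\lambda^-(z)=\Lambda(z)-\lambda^+(z)$. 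In complex dimension two the singular values satisfy $\sigma_1(A^n_z)=\|A^n_z\|$ and $\sigma_1\sigma_2=|\det A^n_z|$, so $\lambda^-$ is also the exponential growth rate of $\sigma_{\min}(A^n_z)$. Running the same arguments on $A^{-1}$, which is a continuous linear cocycle with base $f^{-1}$ preserving $\mu$, produces the matching limits as $n\to-\infty$, which agree with $\pm\lambda^\pm(z)$ via the identities $\sigma_i(A^{-n}_z)=\sigma_{3-i}(A^n_{f^{-n}z})^{-1}$. Because a set has total probability iff it has full measure for every $f$--invariant measure, and the set where these limits exist is $\mu$--conull for each such $\mu$, the Lyapunov exponents are defined on a set of total probability.

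For the splitting I separate according to the Lyapunov gap. On the set $\bpl\lambda^+(z)=\lambda^-(z)\bpr$ any measurable decomposition works, so I focus on $\bpl\lambda^+(z)>\lambda^-(z)\bpr$. Let $v_n\in T_z$ be a unit vector realizing $\sigma_{\min}(A^n_z)=\|A^n_z v_n\|$. The gap forces $(v_n)$ to cluster in a single direction of the spherical fiber $\Cb_z$: any component of $v_n$ transverse to the true slow direction is amplified by roughly $\sigma_{\max}(A^n_z)\sim e^{n\lambda^+(z)}$ rather than $\sigma_{\min}(A^n_z)\sim e^{n\lambda^-(z)}$, and imposing that $v_n$ be the minimizer forces this transverse component to shrink like $e^{-n(\lambda^+-\lambda^-)(z)}$. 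Hence $(v_n)$ is Cauchy in the spherical metric and its limit defines the slow subspace $E_z$. Symmetrically, applying the same construction to $A^{-1}$ produces the fast subspace $F_z$, which can alternatively be described as the unique direction of $T_z$ along which $\|A^n_z v\|$ grows at the maximal rate $\lambda^+(z)$. Measurability of $z\mapsto E_z$ and $z\mapsto F_z$ is then automatic: each is an almost sure pointwise limit of measurable maps into the Polish fiber $\Cb_z$ equipped with the spherical metric from Section~\ref{ss:bundles}.

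The hard part will be verifying the two--sided exponential growth asserted in the definition of a regular point, namely $\lim_{n\to\pm\infty}\frac{1}{n}\log\|A^n_z u\|=\pm\lambda^-(z)$ for $u\in E_z\setminus\bpl 0\bpr$, and analogously for $F_z$. Forward growth along $E_z$ is delicate because the limit direction $u$ need not, at any finite step, coincide with the instantaneous minimizer $v_n$; one must bound the exponential drift of $u$ from $v_n$ by the gap $\lambda^+-\lambda^-$ and show it does not swamp the slow growth rate. This is the classical Lyapunov regularity estimate, combining the angle control produced by the gap with the identity $\|A^n_z u\|\cdot\|A^{-n}_{f^n z}\|\ge \|u\|$ to get matching upper and lower bounds. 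Once this is settled, invariance $A_z E_z=E_{f(z)}$ follows from uniqueness of the slow direction, the two lines $E_z$ and $F_z$ are complementary since they correspond to distinct Lyapunov values, and one obtains the desired decomposition $T_z=E_z\oplus F_z$ on the full--probability set where $\lambda^+(z)>\lambda^-(z)$.
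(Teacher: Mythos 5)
The paper does not prove this statement at all: it is the classical Oseledets multiplicative ergodic theorem, quoted and attributed to the lecture notes \cite{V}, so there is no internal proof to compare yours against. Taken on its own, your outline is the standard two--dimensional argument (Kingman for $\log\|A^n_z\|$, Birkhoff for $\log|\det A^n_z|$, $\lambda^-=\Lambda-\lambda^+$ via $\sigma_1\sigma_2=|\det|$, and the slow line $E_z$ as the limit of minimizing singular directions, with the Cauchy estimate $\sin\measuredangle(v_n,v_{n+1})\lesssim \sigma_2(A^{n+1}_z)/\sigma_1(A^n_z)$ coming from the spectral gap), and the reduction of ``total probability'' to ``full measure for every invariant $\mu$'' is exactly right since the set of regular points is a single Borel set.

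What keeps this from being a complete proof is concentrated in three places, only the first of which you explicitly flag. (i) The two--sided regularity along $E_z$: forward growth at rate $\lambda^-$ follows from $\|u-v_n\|\lesssim e^{-n(\lambda^+-\lambda^-)+n\varepsilon}$ together with $\|A^n_zu\|\ge\sigma_2(A^n_z)\|u\|$, but the backward rate $-\lambda^-$ along the \emph{forward}--constructed line is not symmetric and is where the classical regularity estimates actually live. (ii) Your justification that $E_z\ne F_z$ --- ``they correspond to distinct Lyapunov values'' --- is circular as stated, because a line only ``corresponds to'' a Lyapunov value after you have proved it realizes that two--sided exponent; the standard fix is an angle argument showing $\frac1n\log\sin\measuredangle(E_{f^nz},F_{f^nz})\to0$ a.e., or a determinant bookkeeping contradiction on the set where they coincide. (iii) Identifying the exponents of the backward cocycle with $\pm\lambda^\pm(z)$ requires knowing that $\frac1n\log\sigma_2(A^n_{f^{-n}z})\to\lambda^-(z)$, i.e.\ that the forward and backward exponents agree along the orbit; this is a genuine (if standard) step via invariance of the limit functions and ergodic decomposition, not a formal consequence of $\sigma_i(A^{-n}_z)=\sigma_{3-i}(A^n_{f^{-n}z})^{-1}$. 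Since the paper simply cites the theorem, the efficient course is to do the same; if you do want a self--contained proof, these are the three estimates you still owe.
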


\smallskip

For a proof of Oseledets's Theorem in the setting of
cocycles, see \cite{V}.

\smallskip

We denote the set consisting of all regular points of a cocycle $A$ by $\mathcal{R}(A)$. The Oseledets's Theorem asserts that given an $f$--invariant measure $\mu$, the set of regular points in the support of $\mu$ has total measure. Indeed, we have that $\mu(\mathcal{R}(A)\cap\supp(\mu))=1$.
We denote the set $\mathcal{R}(A)\cap\supp(\mu)$ by $\mathcal{R}(A,\mu)$.

\smallskip

In the original work of Pujals and Rodriguez Hertz (see \cite{P-RH}), an important hypothesis is the absence of saddle periodic points. In our setting, we replace this hypothesis for our next notion.

\smallskip

\begin{defi}\label{def:partially_hyp_measure}
We say that a measure $f$--invariant $\mu$ is {\bf partially hyperbolic}, if for any $x\in\mathcal{R}(A,\mu)$ the inequality $\lambda^-(x)<0\leq\lambda^+(x)$ hold.

We also say that $f$ {\bf has no attractors} (in the broad sense) if all $f$--invariant measure is partially hyperbolic.
\end{defi}

\subsection{The Multiplier}

In the studies of rational maps in the Riemann sphere, an
important tool to describe the dynamics near a  periodic
point, is the notion of multiplier. By B$\rm{\ddot{o}}$cher's
Theorem, the dynamic in a neighborhood of the periodic point is (via
conjugation) given by the dynamics of the map $w\mapsto \lambda w$,
where $\lambda$ is called the multiplier of the point. In many cases, we are
interested in the norm of the multiplier.

\smallskip

For a point $z\in\Cb$ which is not periodic, it is possible to define a similar tool as the multiplier, using the spherical metric.

\smallskip

\begin{defi}\label{def:g}
Let $U\subset \Cb$ be an open set and $R:U\rightarrow\Cb$ be an holomorphic map. We define the {\bf norm of the multiplier of $R$ at the point $z$}, as the spherical norm of the derivative of $R$ at the point $z$. That is,
\begin{equation}\label{g_def}
||R\sp\prime(z)||=\sup\left\{\frac{||R\sp\prime(z)\xi||_{R(z)}}{||\xi||_z}\, :\, \xi\in T_z\Cb\right\},
\end{equation}
where $||\cdot||_z$ denote the spherical norm in $T_z\Cb$.
\end{defi}

\smallskip

Under the identification $T_z\Cb$ with $\C$, an explicit expression for the spherical metric is
\begin{equation}\label{sph_met}
||\xi||_z=\frac{2|\xi|}{1+|z|\sp2}.
\end{equation}
Thus, it is not difficult to see that
\begin{equation}\label{ec:03}
||R\sp\prime(z)||=|R\sp\prime(z)|\cdot\frac{1+|z|\sp2}{1+|R(z)|\sp2}.
\end{equation}

\smallskip

An important result is the following.

\smallskip

\begin{pr}\label{pr:multip}
Let $z_0\in\Cb$ and $R$ be a rational map. Also let $f$ and $h$ two isometries in the Riemann sphere such that $f(z)=h(R(z))=0$. Then
\[||R\sp\prime(z)||=|(h\circ R\circ f\sp{-1})'(0)|.\]
\end{pr}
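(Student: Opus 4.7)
The plan is to let $\widetilde R := h\circ R\circ f^{-1}$ and note that by the choice of $f$ and $h$ we have $\widetilde R(0)=h(R(f^{-1}(0)))=h(R(z))=0$. Then compare $||R'(z)||$ with $|\widetilde R'(0)|$ through two observations.

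The first observation is that the spherical norm of the derivative is multiplicative along compositions (this follows from the chain rule together with the very definition \eqref{g_def} as a supremum of quotients of spherical norms). Applied to $\widetilde R=h\circ R\circ f^{-1}$ at $0$, one obtains
\[
||\widetilde R'(0)||=||h'(R(z))||\cdot||R'(z)||\cdot||(f^{-1})'(0)||.
\]
Since $f$ and $h$ are isometries of the spherical metric on $\Cb$, so is $f^{-1}$, and by definition of isometry each of $||h'(R(z))||$ and $||(f^{-1})'(0)||$ equals $1$. This yields $||\widetilde R'(0)||=||R'(z)||$.

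The second observation is that the formula \eqref{ec:03} specialises very cleanly at a fixed-point-like situation: because $\widetilde R(0)=0$, one has
\[
||\widetilde R'(0)||=|\widetilde R'(0)|\cdot\frac{1+|0|^2}{1+|\widetilde R(0)|^2}=|\widetilde R'(0)|,
\]
so the spherical norm of the derivative of $\widetilde R$ at $0$ reduces to the ordinary modulus of its complex derivative. Combining the two observations gives $||R'(z)||=|(h\circ R\circ f^{-1})'(0)|$, which is the claim.

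There is no serious obstacle: the only mildly delicate point is to confirm, via the definition of the spherical metric on $T_z\Cb$ given in \eqref{sph_met} and the Appendix, that an isometry of $\Cb$ really does have spherical derivative of norm $1$ at every point; but this is an immediate consequence of the fact that isometries preserve the norms on tangent spaces from which \eqref{g_def} is built. The rest is bookkeeping with the chain rule and the evaluation at $0$.
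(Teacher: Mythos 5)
Your proof is correct and follows essentially the same route as the paper: both arguments rest on the chain rule, the fact that isometries of the spherical metric have unit spherical derivative norm, and the formula \eqref{ec:03}. You organize the computation at the level of spherical norms (using multiplicativity and then specializing at the fixed point $0$), whereas the paper computes the Euclidean moduli $|f'(z)|$ and $|h'(R(z))|$ explicitly, but the content is the same.
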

\begin{proof}
Since $f$ is an isometry in the Riemann sphere, for each $v\in T_w\Cb$ we have that $||v||_w=||f\sp\prime(w)v||_{f(w)}$, and the same equality holds for $h$.

\smallskip

Thus we have that,
\[\frac{2}{1+|z|\sp2}=||1||_z=||f\sp\prime(z)||_0=2|f\sp\prime(z)|\]
and
\[\frac{2}{1+|R(z)|\sp2}=||1||_{R(z)}=||h\sp\prime(R(z))||_{0}=2|h\sp\prime(R(z))|,\]
therefore we conclude that
\[|(h\circ R\circ f\sp{-1})'(0)|=|(h\sp\prime(R(z))|\cdot|R\sp\prime(z)|\cdot| f\sp\prime(z)|\sp{-1}=\frac{1}{1+|R(z)|\sp2}\cdot|R\sp\prime(z)|\cdot(1+|z|\sp2).\]
\end{proof}

\smallskip

In the following lemma, we give an explicit formula to calculate the norm of the multiplier, for a M${\ddot{\rm o}}$bius transformation.

\smallskip

\begin{lm}\label{lemma:g} Let $M$ be a M$\rm{\ddot{o}}$bius transformations given by
\[M(\xi)=\frac{a\xi+b}{c\xi+d},\]
then
\begin{equation}\label{definition_g_2}
||M\sp\prime(z)||=\frac{|\delta|}{||Av_z||^2}
\end{equation}
where
\[A=\left(\begin{array}{cc}
   a&b\\
   c&d
  \end{array}\right),
\]
$v_z$ is an unitary vector in $\Ctwo$ whit
$[v_z]=z$, and $\delta=\det(A)$.
\end{lm}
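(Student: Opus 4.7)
The plan is to combine the closed-form expression for the spherical derivative given in equation (1.3) with the usual quotient-rule computation of $M'(z)$, and then recognize the denominator as $\|Av_z\|^2$ after rescaling by the norm of the lift of $z$.

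First, I would compute the ordinary (Euclidean) derivative of $M$. Using the quotient rule,
\[
M'(z)=\frac{a(cz+d)-c(az+b)}{(cz+d)^2}=\frac{\delta}{(cz+d)^2},
\]
so $|M'(z)|=|\delta|/|cz+d|^2$. Plugging this into the formula
\[
\|M'(z)\|=|M'(z)|\cdot\frac{1+|z|^2}{1+|M(z)|^2}
\]
from (1.3), I would next clear the denominator $1+|M(z)|^2$ by writing
\[
1+|M(z)|^2=\frac{|cz+d|^2+|az+b|^2}{|cz+d|^2},
\]
which causes the factors of $|cz+d|^2$ to cancel and yields
\[
\|M'(z)\|=\frac{|\delta|\,(1+|z|^2)}{|az+b|^2+|cz+d|^2}.
\]

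The final step is to identify the denominator with $\|Av_z\|^2$. Since $v_z$ is a unit vector in $\mathbb{C}^2$ with $[v_z]=z$, I would take $v_z=(1+|z|^2)^{-1/2}(z,1)$ (identifying the point $z\in\overline{\mathbb{C}}$ with the projective class of $(z,1)$). A direct computation then gives
\[
Av_z=\frac{1}{\sqrt{1+|z|^2}}\,(az+b,\ cz+d),\qquad \|Av_z\|^2=\frac{|az+b|^2+|cz+d|^2}{1+|z|^2}.
\]
Substituting this into the previous display gives exactly $\|M'(z)\|=|\delta|/\|Av_z\|^2$, which is what we wanted.

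There is essentially no obstacle here; the only minor point to be careful about is the convention for the lift $v_z$ (one must make sure that the normalization used for the projective identification $[v_z]=z$ matches the affine chart in which $M$ is written), and that the formula should be checked to be chart-independent at $z=\infty$, which follows because both sides of (1.4) are continuous on $\overline{\mathbb{C}}$ and agree on the dense set where $z$ and $M(z)$ are finite.
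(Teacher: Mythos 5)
Your proof is correct and follows essentially the same route as the paper: compute $M'(z)=\delta/(cz+d)^2$ by the quotient rule, substitute into the formula $\|M'(z)\|=|M'(z)|\,\frac{1+|z|^2}{1+|M(z)|^2}$, and rewrite the resulting denominator $|az+b|^2+|cz+d|^2$ over $1+|z|^2$ as $\|Av_z\|^2$ for a unit lift $v_z$ of $z$. The only cosmetic difference is that the paper works with a general unit vector $v_z=(v^1_z,v^2_z)$ with $z=v^1_z/v^2_z$ while you pick the explicit lift $(z,1)/\sqrt{1+|z|^2}$, which is equivalent since $\|Av_z\|$ is unchanged under unimodular rescaling.
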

\begin{proof}
From equation (\ref{ec:03}),  we have that
\[||M\sp\prime(z)||=|M'(z)|\cdot\frac{1+|z|^2}{1+|M(z)|^2}=
\frac{|\delta|}{|cz+d|^2}\frac{1+|z|^2}{1+|M(z)|^2}.\]
If we take $v_z=(v^1_z,v^2_z)\in\C^2$ an unitary vector such that $z=v^1_z/v^2_z$, then
\[\begin{aligned}
||M\sp\prime(z)||&=|\delta|\cdot\frac{1+|z|^2}{|az+b|^2+|cz+d|^2}\\&=
|\delta|\cdot\frac{1+\left|\frac{v^1_z}{v^2_z}\right|^2}{\left|a\frac{
v^1_z}{v^2_z}
+b\right|^2+\left|c\frac{v^1_z}{v^2_z}+d\right|^2}\\
&=|\delta|\cdot\frac{|v^1_z|^2+|v^2_z|^2}{
|av^1_z+bv^2_z|^2+|cv^1_z+dv^2_z|^2}\\
&=\frac{|\delta|}{||Av_z||^2}.
  \end{aligned}
\]
\end{proof}

\smallskip

The equation (\ref{definition_g_2}), hallow define the norm of a multiplier for a projective cocycle $M$ in terms of his linear cocycle related with them, and the hermitian metric considered in the fibre bundle.

\smallskip

\begin{defi}\label{defi:g_for_cocyles}
Let $A$ be a linear cocycle on $TX$ and $M$ the natural projective cocycle related with them. Given $\xi\in\P(X)$ with $\xi\in\Cb_z$ we define the {\bf norm of the multiplier of $M$ at the point $\xi\in\Cb_z$} by
\begin{equation}\label{g_1_iterate}
g(1,\xi)=g(1,\xi,M)=\frac{|\det(A_z)|}{||A_zv_\xi||_{f(z)}^2}
\end{equation} and similarly, the norm of the multiplier of $M\sp n$ at the point $\xi$ is defined by
\begin{equation}\label{g_n_iterate}
g(n,\xi)=g(1,\xi,M\sp n)=\frac{|\det(A\sp n_z)|}{||A_z\sp
nv_\xi||_{f\sp n(z)}\sp 2}
\end{equation}
where $v_\xi$ is choose unitary and such that $[v_\xi]=\xi$.
\end{defi}

\smallskip

\begin{rem}
It follows from equation (\ref{g_n_iterate}) that $g(n+m,\xi)=g(n,M\sp n\xi)\cdot g(m,\xi)$ for each $n,m\in\Z$. This prove is elementary and will be left to the  reader.
\end{rem}

\smallskip

In the following subsection we justify the Definition \ref{def:g}, but is not essential for the rest of this work,  and may be skipped.

\subsection{Some Remarks}
In the remainder of this section, we explain with detail the motivations for Definition \ref{def:g}. The experienced reader can skip directly to the next section.

\smallskip

First, we recall the formal definition of {\bf multiplier} for a rational map at a fixed point.

\smallskip

\begin{defi}[{\bf Multiplier}]\label{def:multiplier}
Let $R$ be a rational function on the Riemann sphere $\Cb$, and let $z\in\Cb$ be a fixed point of $R$.

\begin{itemize}
\item[{\it i)}] If $z\in\C$ we define the
multiplier of $R$ in the point $z$ by $R^\prime(z)$, and is denoted by
$\lambda(z,R)$.
\item[{\it ii)}] If $z=\infty$ we choose a M$\it{\ddot{o}}$bius map
$f$ such that $f(\infty)\in\C$, and it is defined
$\lambda(\infty,R)=\lambda(f(\infty),f\circ R\circ f^{-1})$.
\end{itemize}
\end{defi}

\smallskip

Note that in the preceding definition the value of $\lambda(z,R)$ when $z\in\C$ remains {\it invariant under conjugation} by a M$\rm{\ddot{o}}$bius transformations. It follows that $\lambda(\infty,R)$ is well--defined.

\smallskip

Remember that a M$\rm{\ddot{o}}$bius transformation $T$ is an isometry in the Riemann sphere whit the spherical metric, if and only if we can write
\[T(w)=T_{a,b}(w)=\frac{\overline{b}w+a}{-\overline{a}w+b},\]
with $a$ and $b$ complex number and $|a|^2+|b|^2=1$. Note that if we write $z=a/b$ (and $z=\infty$ if $b=0$) then $T(0)=z$. Since $T$ is an isometry, we conclude that $T(\infty)=z^*$ where $z\sp*$ is the antipodal point of $z$, that is $z\sp*=-1/\overline{z}$. Denote the set consisting of all isomorphisms in the Riemann sphere with the spherical metric by $\Isom(\Cb)$.

\smallskip

In this point, we want to extend the notion of multiplier for the case in which $z$ is not a fixed point. The next proposition stay that this extension can not be done as expected.

\smallskip

\begin{pr}\label{pr_multip}Let $D\subset \Cb$ be a topological disc
and $R:D\rightarrow\Cb$ be an holomorphic map. Let $z\in D\mapsto
f_{i,z},h_{i,z}\in \Isom(\Cb)$ with $i=1,2$, continuous
maps such that $f_{i,z}(z)=h_{i,z}(R(z))=0$.
If we define $F_{1,z}(w)=h_{1,z}\circ R\circ f_{1,z}^{-1}(w)$ and
$F_{2,z}(w)=h_{2,z}\circ R\circ f_{2,z}^{-1}(w)$ in some neighborhood
of zero, then there exists a unique continuous function
$\xi:D\rightarrow\mathbb{S}^1$ such that
\begin{equation}\label{mult_bruto}
 F_{1,z}^\prime(0)=\xi(z)F_{2,z}^\prime(0).
\end{equation}
\end{pr}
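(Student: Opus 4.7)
The plan is to exploit the group-theoretic rigidity of the spherical isometries: any two such isometries sending the same point to $0$ differ by a rotation fixing $0$, and rotations have modulus-one complex derivative at $0$. This immediately forces $F_{1,z}'(0)$ and $F_{2,z}'(0)$ to be proportional with factor in $\mathbb{S}^1$.

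More concretely, I would proceed as follows. First, observe that the composition $\phi_z := f_{2,z}\circ f_{1,z}^{-1}$ belongs to $\Isom(\Cb)$ and fixes $0$, because both $f_{1,z}$ and $f_{2,z}$ send $z$ to $0$. Using the parametrization $T_{a,b}$ of spherical isometries recalled just before the statement, the subgroup of those fixing $0$ is exactly the rotations $w\mapsto e^{i\theta}w$. Hence $\phi_z(w)=\alpha(z)w$ for a unique $\alpha(z)\in\mathbb{S}^1$. Similarly, $\psi_z:=h_{1,z}\circ h_{2,z}^{-1}$ fixes $0$, so $\psi_z(w)=\beta(z)w$ for a unique $\beta(z)\in\mathbb{S}^1$.

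Next, rewrite
\[
F_{1,z}(w) \;=\; h_{1,z}\circ R\circ f_{1,z}^{-1}(w)
\;=\; \psi_z\bigl(h_{2,z}\circ R\circ f_{2,z}^{-1}(\phi_z(w))\bigr)
\;=\; \beta(z)\, F_{2,z}\bigl(\alpha(z)w\bigr),
\]
valid in a neighborhood of $0$. Differentiating at $w=0$ gives
\[
F_{1,z}'(0) \;=\; \alpha(z)\,\beta(z)\, F_{2,z}'(0),
\]
so the function $\xi(z):=\alpha(z)\beta(z)$ satisfies (\ref{mult_bruto}) and takes values in $\mathbb{S}^1$.

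Finally, continuity and uniqueness: since $z\mapsto f_{i,z},h_{i,z}$ are continuous by hypothesis and the parameters $(a,b)$ of a spherical isometry depend continuously on the isometry, the rotation angles $\alpha(z)$ and $\beta(z)$ (hence $\xi(z)$) are continuous on $D$. Uniqueness follows at once at points where $F_{2,z}'(0)\neq 0$; at the remaining points $F_{1,z}'(0)$ also vanishes (by the same identity) and one must argue that the continuous extension is forced, which is where a small amount of care is needed---this is the only mildly subtle step, but it is handled by the fact that the factorization $F_{1,z}(w)=\beta(z)F_{2,z}(\alpha(z)w)$ determines $\alpha(z)\beta(z)$ intrinsically as an element of $\mathbb{S}^1$, independently of whether the derivative at $0$ vanishes, so $\xi=\alpha\cdot\beta$ is the unique continuous choice.
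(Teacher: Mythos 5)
Your proof is correct and is essentially the paper's own argument: both identify the transition maps $f_{2,z}\circ f_{1,z}^{-1}$ and $h_{1,z}\circ h_{2,z}^{-1}$ as spherical isometries fixing $0$, hence rotations, and take $\xi$ to be the product of their multipliers; the paper merely computes these rotation factors explicitly from the $T_{a,b}$ parametrization rather than arguing abstractly. Your extra remark about the points where $F_{2,z}'(0)=0$ is a legitimate (and welcome) refinement that the paper's proof silently skips.
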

\begin{proof}
If we write $f_{i,z}=T_{a_i(z),b_i(z)}$ and
$h_{i,z}=T_{c_i(z),d_i(z)}$, then
$f_{1,z}\circ f_{2,z}^{-1}(w)=\xi_1(z)\cdot w$ and
$h_{2,z}\circ
h_{1,z}^{-1}(w)=\xi_2(z)\cdot w$, where $\xi_1(z)=\zeta_1(z)/
\overline{\zeta}_1(z)$, $\xi_2(z)=\zeta_2(z)/\overline{\zeta}_2(z)$,
$\zeta_1(z)=a_1(z)\overline{a}_2(z)+\overline{b}_1(z)b_2(z)$, and
$\zeta_2(z)=c_2(z)\overline{c}_1(z)+\overline{d}_2(z)d_1(z)$. Taking
$\xi(z)=(\xi_1(z))^{-1}\cdot\xi_2(z)$ we obtain the Proposition.
\end{proof}

\smallskip

Note that the number $\widetilde{\lambda}(z,R):=F_{1,z}^\prime(0)$ is the multiplier in the fixed point case. Nevertheless, the preceding Proposition establishes that $\widetilde{\lambda}(z,R)$ depend of the isometries considered in the ``conjugation'', but $|\widetilde{\lambda}(z,R)|$ is independent. So, is reasonable to call this number as ``the norm of the multiplier''. Since Proposition \ref{pr:multip} establishes that that $|\widetilde{\lambda}(z,R)|$ is equal to the spherical norm of $R\sp\prime(z)$, we decided to accept this terminology in the Definition \ref{def:g}.

\section{Dominated Splitting and Hyperbolic Projective Cocycles}

The main goal of this section is to characterize the notion of dominated splitting for a linear cocycle, in terms of his action in the projective bundle. We introduce the notion of hyperbolic projective cocycle, that roughly speaking, are those cocycles that present the same dynamics as a hyperbolic M${\ddot{\rm o}}$bius transformation. In Theorem  \ref{pr:ds_vs_hyp}, we prove that a linear cocycle has dominated splitting if and only if his projective cocycle is hyperbolic. Moreover, in the same Theorem we estate that the continuity of the section it is not necessary to obtain domination.

\smallskip

We recall the notion of dominated splitting for linear cocycles.

\smallskip

\begin{defi}
We say that a cocycle $A:TX\rightarrow TX$ has {\bf dominated splitting} if there exist an $A$--invariant splitting $TX=E\oplus F$ where $E$ and $F$ are one-dimensional complex planes, and $\,l\ge1$ such that
\[||A_z^l|_{E_z}||\cdot||A^{-l}_{f^l(z)}|_{F_{f^l(z)}}||<\frac{1}{2}.\]
\end{defi}

\smallskip

Recall that if $A$ has dominated splitting, then the $A$--invariant splitting $TX=E\oplus F$ is continuous.
The following is a classical result that establishes equivalences
properties with dominated splitting.

\smallskip

\begin{pr}\label{equivalences_to_ds}Let $A$ be a linear cocycle on a vector bundle $TX$. Then the following statements are
equivalent:
\begin{itemize}
\item[1.] The cocycle $A:TX\rightarrow TX$ has dominated splitting.
\item[2.] There exist an $A$--invariant splitting $TX=E\oplus F$ where $E$ and $F$ are one--dimensional complex planes, a constant $0<\lambda<1$, and a $C>0$ such that
\[||A_z^n|_{E_z}||\cdot||A^{-n}_z|_{F_{f^n(z)}}||\leq C\lambda^n\]for every $z\in X$ and all $n>0$.
\item[3.] There exist a splitting $TX=E\oplus F$ where $E$ and $F$ are one--dimensional complex planes, a constant $l\ge0$, and cone fields $K(\alpha,E)$ and  $K(\beta,F)$, where
\[K(\alpha,E_z)=\bpl u+v\in E_z\oplus F_z\,:\,||u||\leq
\alpha||v||\bpr\]
and
\[K(\beta,F_z)=\bpl u+v\in E_z\oplus F_z\,:\,||v||\leq
\beta||u||\bpr,\]
$A\sp l$--invariant, that is
\[A^{-l}_{f^l(z)}(K(\alpha,E_{f^l(z)}))\subset K(\alpha,E_z)^\circ,\,
\, \, A^{l}_{z}(K(\beta,F_z))\subset K(\beta,F_{f^l(z)})^\circ\]
and
\[||A_z^l|_{K(\alpha,E_z)}||\cdot||A^{-l}_{f^l(z)}|_{K(\beta,F_{f\sp
l(z)})}||<\frac{1}{2}\footnote{This condition appear for real cocycle
of each dimension. In
our case, complex two-dimensional dominated splitting is not
necessary. To see this fact, see Proposition
\ref{contractive_section}, and proof of Theorem
\ref{pr:ds_vs_hyp}, Claim 1, in the Subsection \ref{ss:proof}.}\]
where $K^\circ={\rm int}(K)\cup\bpl 0\bpr$. %We say that such of those
%cones are $A$-invariant and have the property of $l$-domination.
\end{itemize}
\end{pr}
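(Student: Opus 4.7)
The plan is to prove the circular chain $(2)\Rightarrow(1)\Rightarrow(3)\Rightarrow(2)$. The implication $(2)\Rightarrow(1)$ is immediate: choose $l$ so large that $C\lambda^l<1/2$, and the definition of dominated splitting in item (1) is satisfied with this $l$.

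For $(1)\Rightarrow(2)$, I would exploit that $E$ and $F$ are one-dimensional complex $A$-invariant subbundles, so restricted operator norms along a single orbit multiply exactly:
\[
\|A^{kl}_z|_{E_z}\|=\prod_{i=0}^{k-1}\|A^{l}_{f^{il}(z)}|_{E_{f^{il}(z)}}\|,
\qquad
\|A^{-kl}_{f^{kl}(z)}|_{F_{f^{kl}(z)}}\|=\prod_{i=0}^{k-1}\|A^{-l}_{f^{(i+1)l}(z)}|_{F_{f^{(i+1)l}(z)}}\|.
\]
Multiplying and applying (1) orbit-by-orbit gives the product bound $(1/2)^k$. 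For a general $n$, write $n=kl+r$ with $0\le r<l$, and absorb the $r$-step tails into a multiplicative constant $C$ using continuity of $A$ and compactness of $X$; this yields (2) with $\lambda=(1/2)^{1/l}$.

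For $(1)\Rightarrow(3)$, I would fix $\alpha,\beta>0$ small and prove cone invariance by direct computation. Given $v=v_E+v_F\in K(\alpha,E_{f^l(z)})$, decompose $A^{-l}v=A^{-l}v_E+A^{-l}v_F$ along the invariant splitting; then
\[
\frac{\|A^{-l}v_F\|}{\|A^{-l}v_E\|}\le \|A^{-l}_{f^l(z)}|_F\|\cdot \|A^l_z|_E\|\cdot\alpha \le C\lambda^l\cdot\alpha,
\]
so taking $l$ large forces strict interior inclusion $A^{-l}(K(\alpha,E_{f^l(z)}))\subset K(\alpha,E_z)^{\circ}$, and symmetrically for $K(\beta,F)$. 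The norm-product inequality in (3) is then obtained by estimating $\|A^l|_{K(\alpha,E_z)}\|$ in terms of $\|A^l|_E\|$ and $\|A^l|_F\|$, using that the angle between $E$ and $F$ is uniformly bounded below on the compact space $X$ (consequence of continuity of the splitting established in (1)), and then applying (2) again with $l$ large. The footnote signals that the strict $<1/2$ clause is in fact redundant in complex dimension two; I would defer its removal to Proposition \ref{contractive_section} rather than try to handle it here.

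For the return $(3)\Rightarrow(2)$ (which will also close the loop back to (1)), I would construct the invariant splitting as an intersection of nested cones. Set $K_n(z):=A^{-nl}_{f^{nl}(z)}\bigl(K(\alpha,E_{f^{nl}(z)})\bigr)$; the invariance clause in (3) makes $K_{n+1}(z)\subset K_n(z)^\circ$, so compactness gives a nonempty intersection $\tilde E_z$, and dually one obtains $\tilde F_z$ from the $K(\beta,F)$-cones under forward iterates. The contraction condition $\|A^l|_{K(\alpha,E)}\|\cdot\|A^{-l}|_{K(\beta,F)}\|<1/2$ forces $\tilde E_z$ and $\tilde F_z$ to be exactly one-dimensional complex subspaces, rather than thicker cones, since any two distinct rays in $\tilde E_z$ would contradict the strict contraction upon iteration. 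Invariance $A_z\tilde E_z=\tilde E_{f(z)}$ is built into the construction, and the exponential product bound of (2) follows by reading the cone contraction inequality iterated $k$ times.

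I expect the main obstacle to be the step $(3)\Rightarrow(2)$: showing that the nested cone intersection is a genuine complex line (and continuous in $z$), rather than a proper sub-cone, is exactly where the contraction hypothesis must be used carefully, and it is also the step where the peculiarities of complex two-dimensional cocycles enter through the footnote. Everything else is essentially telescoping estimates plus compactness.
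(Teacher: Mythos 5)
The paper never actually proves this proposition --- it is introduced as ``a classical result'' and no argument is supplied --- so there is no in-paper proof to compare against; your chain $(2)\Rightarrow(1)\Rightarrow(3)\Rightarrow(2)$ is the standard one. Most of it is sound: exact multiplicativity of restricted norms on one-dimensional invariant subbundles makes $(1)\Leftrightarrow(2)$ a telescoping argument, and your $(3)\Rightarrow(2)$ is correct and completable as you indicate --- if $\widetilde{E}_z$ contained two distinct lines $L_1,L_2$, a unit vector $w\in\widetilde{F}_z$ could be written $w=c_1u_1+c_2u_2$ with $u_i\in L_i$, and then
\[
1=\|w\|\le\Bigl(\prod_j\|A^{-l}|_{K(\beta,F)}\|\Bigr)\|A^{nl}w\|\le(|c_1|+|c_2|)\prod_j\Bigl(\|A^l|_{K(\alpha,E)}\|\cdot\|A^{-l}|_{K(\beta,F)}\|\Bigr)<(|c_1|+|c_2|)2^{-n}\rightarrow0,
\]
a contradiction. (Note that you are tacitly reading $K(\alpha,E)$ as the cone \emph{around} $E$, i.e.\ $\|v\|\le\alpha\|u\|$; the paper's displayed cone inequalities are swapped relative to its own invariance conditions and to equation (\ref{invariant_splitting}), and your reading is the consistent one.)

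The one step that fails as written is the norm-product clause in $(1)\Rightarrow(3)$: you fix $\alpha,\beta$ first and then take $l$ large. On one-dimensional fibres $\|A^{-l}|_F\|=\|A^l|_F\|^{-1}$ and $\|A^{-l}|_E\|=\|A^l|_E\|^{-1}$, so decomposing cone vectors along the splitting gives, up to angle constants,
\[
\|A^l|_{K(\alpha,E)}\|\cdot\|A^{-l}|_{K(\beta,F)}\|\asymp\|A^l|_E\|\,\|A^{-l}|_F\|+\alpha+\beta+\alpha\beta\,\|A^l|_F\|\,\|A^{-l}|_E\|,
\]
and the last term equals $\alpha\beta\bigl(\|A^l|_E\|\,\|A^{-l}|_F\|\bigr)^{-1}\ge\alpha\beta\,C^{-1}\lambda^{-l}$, which blows up as $l\rightarrow\infty$ for fixed $\alpha,\beta$. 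A one-point example makes this concrete: for $A=\mathrm{diag}(\mu,\nu)$ with $|\mu|<1<|\nu|$, the product of cone-restricted norms is comparable to $\alpha\beta(|\nu|/|\mu|)^l$. The repair is to reverse the quantifiers: first choose $l$ so that $C\lambda^l$ is small, then shrink $\alpha,\beta$ depending on $l$; this costs nothing elsewhere, since your cone-invariance computation shows strict invariance holds for every $\alpha,\beta>0$ as soon as $C\lambda^l<1$. With that reordering the whole argument goes through.
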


\smallskip

\begin{rem}\label{rem:ds}
\begin{itemize}
\item[1.] It follows from the preceding definition that a linear cocycle $A$ has dominated splitting if and only if $A\sp n$ has dominated splitting, for all $n\ge2$.
\item[2.] Note that item 3 in the preceding establish that it is possible to get the invariant splitting given in item {1} and item 2, by the expressions
\end{itemize}
\begin{equation}\label{invariant_splitting}\widetilde{E}_z=\bigcap_{
n\geq0}A^{-n}_{f^n(z)}
(K(\alpha,E_{f^n(z)}))\, \, \, \, \textrm{and}\, \,
\, \,
\widetilde{F}_z=\bigcap_{n\geq0}A^{n}_{f^{-n}(z)}(K(\beta,F_{f^{-n}(z)
})).
\end{equation}
\end{rem}

\smallskip

The main idea, is note that the invariants splitting determines (in each fibre) two specials points in the sphere, and the cones fields represent disks in the riemann sphere that are asymptotically contracted/expanded by the projective cocycle. We explain this remark in the subsequent paragraphs.

\smallskip

\begin{defi}
\begin{enumerate}
\item We say that a section $\sigma\in\Gamma(X,\P(X))$ is {\bf $M$--invariant} if
$M(\sigma(z))=\sigma(f(z))$.

\item We say that a section $\sigma$ is {\bf contractive} if it is $M$--invariant and there exist constant $C>0$ and a $0<\lambda<1$, such that $g(n,\sigma(z))\leq C\lambda^n$ for every $z\in X$ and all $n\geq 1$. Similarly, we say that a section is {\bf expansive} if this is contractive for $M^{-1}$.

\item We say that a cocycle $M$ is {\bf hyperbolic} if there exist two disjoint sections $\tau$ and $\sigma$ in $\Gamma(X,\P(X))$ (i.e., $\tau(z)\neq\sigma(z)$ for every $z\in X$) such that $\tau$ is expansive and $\sigma$ is contractive.
\end{enumerate}
\end{defi}

\smallskip

So we can state the following:

\smallskip

\begin{teo}\label{pr:ds_vs_hyp}
Let $A$ be a linear cocycle on $TX$ and $M$ the projective
cocycle on $\P(X)$ related with them. Then the following properties are equivalents:
\begin{enumerate}
\item The cocycle $A$ has dominated splitting
\item The cocycle $M$ is hyperbolic.
\item There exists $\sigma$ a contractive section for $M$ (equivalently there exists $\tau$ an expansive section).
\item\label{basical_domination} There exist $C>0$ and $\lambda>1$ such that for every $z\in X$ there exists one direction $\tau_z\in\Cb_z$ such that $g(n,\tau_z)\geq C\lambda^n$ for every $n>0$ (equivalently $\sigma_z\in\Cb_z$ such that $g({-n},\sigma_z)\geq C\lambda^n$ for every $n>0$).
\end{enumerate}
\end{teo}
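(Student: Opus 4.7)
The plan is to close the cycle $(1)\Rightarrow(2)\Rightarrow(3)\Rightarrow(4)\Rightarrow(1)$. The implication $(2)\Rightarrow(3)$ is immediate, and $(3)\Rightarrow(4)$ follows pointwise from the cocycle identity $g(-n,M^n\xi)=g(n,\xi)^{-1}$ recorded after Definition~\ref{defi:g_for_cocyles}: if $\sigma$ is a contractive section, then $g(-n,\sigma(z))=g(n,\sigma(f^{-n}(z)))^{-1}\ge C^{-1}\lambda^{-n}$. The substantive work lies in $(1)\Rightarrow(2)$ and $(4)\Rightarrow(1)$.

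For $(1)\Rightarrow(2)$, I would extract the two sections directly from the invariant splitting: set $\sigma(z)=[F_z]$ and $\tau(z)=[E_z]$, with $E$ the more contracted subbundle. For unit $u\in E_z$ and $v\in F_z$, the wedge identity $\|A_z^n u\wedge A_z^n v\|=|\det A_z^n|\cdot\|u\wedge v\|$ combined with Lemma~\ref{lemma:g} gives
\[
g(n,\sigma(z))=\frac{|\det A_z^n|}{\|A_z^n v\|^2}=\frac{\|A_z^n u\|}{\|A_z^n v\|}\cdot\frac{\sin\theta(f^n(z))}{\sin\theta(z)},
\]
where $\theta(\cdot)$ is the Hermitian angle between $E$ and $F$. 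By item~2 of Proposition~\ref{equivalences_to_ds} the first factor decays like $C\lambda^n$ with $\lambda<1$, and continuity of the splitting on the compact space $X$ bounds the second factor, so $\sigma$ is contractive. The symmetric computation for $A^{-1}$ yields the expansivity of $\tau$.

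The crux is $(4)\Rightarrow(1)$. The main tool is a uniqueness estimate: for distinct $\xi_1,\xi_2\in\Cb_z$ with unit representatives $v_1,v_2$, the bound $\|A^n v_1\wedge A^n v_2\|\le\|A^n v_1\|\|A^n v_2\|$ together with $\|A^n v_1\wedge A^n v_2\|=|\det A_z^n|\cdot\|v_1\wedge v_2\|$ gives $g(n,\xi_1)g(n,\xi_2)\le 1/\|v_1\wedge v_2\|^2$. Hence at most one direction in each fiber $\Cb_z$ can satisfy $g(n,\xi)\ge C\lambda^n$ for all $n\ge 1$, so $\tau_z$ is uniquely determined. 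The cocycle relation $g(n,M_z\tau_z)=g(n{+}1,\tau_z)/g(1,\tau_z)$, together with the uniform upper bound on $g(1,\cdot)$ coming from continuity of $A$ on compact $X$, transfers the growth condition from $\tau_z$ at $z$ to $M_z\tau_z$ at $f(z)$; uniqueness then forces $M_z\tau_z=\tau_{f(z)}$. Continuity of $\tau$ follows in the same spirit: any limit $\tau_{z_n}\to\xi$ inherits the growth condition and hence coincides with $\tau_z$.

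Set $E_z=\mathrm{line}(\tau_z)$. The analogous argument applied to the inverse cocycle $A^{-1}$, using the $\sigma$-form of hypothesis~(4), produces a continuous $A$-invariant line bundle $F_z=\mathrm{line}(\sigma_z)$. Transversality follows by contradiction: if $\sigma_{z_0}=\tau_{z_0}$, invariance of both sections forces $\sigma=\tau$ along the orbit of $z_0$, and combining the growth conditions in both time directions with the cocycle identity yields $g(n,\tau_{f^{-n}(z_0)})\le(C\lambda^n)^{-1}$, contradicting the hypothesis for large $n$. Continuity on compact $X$ gives a uniform lower bound for $\sin\theta(z)$, and the same rewriting as in $(1)\Rightarrow(2)$ expresses the domination ratio as
\[
\|A_z^n|_{E_z}\|\cdot\|A_{f^n(z)}^{-n}|_{F_{f^n(z)}}\|=\frac{\|A^n u\|}{\|A^n v\|}=\frac{\sin\theta(f^n(z))}{g(n,\tau(z))\sin\theta(z)}\le C'\lambda^{-n},
\]
which is item~2 of Proposition~\ref{equivalences_to_ds}. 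The principal obstacle will be the uniqueness/invariance/continuity bootstrap opening $(4)\Rightarrow(1)$: the hypothesis supplies only pointwise directions with no a priori regularity, and every structural property of the resulting invariant subbundles must be extracted from the single quantitative estimate on $g(n,\cdot)$.
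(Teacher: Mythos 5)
Your cycle $(1)\Rightarrow(2)\Rightarrow(3)\Rightarrow(4)\Rightarrow(1)$ is reasonable, and several pieces are sound and even cleaner than the paper's: the wedge/angle identity for $(1)\Rightarrow(2)$ is more direct than the paper's route through cone fields and conjugation to a diagonal cocycle, and your uniqueness--invariance--continuity bootstrap at the start of $(4)\Rightarrow(1)$ is exactly the paper's Lemma \ref{uniq_directions} together with Claim 4.

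There is, however, a genuine gap at the decisive step of $(4)\Rightarrow(1)$. You write that the line bundle $F$ is obtained by ``the analogous argument applied to the inverse cocycle, using the $\sigma$--form of hypothesis (4)''. But hypothesis (4) gives you only \emph{one} of the two forms (the parenthetical ``equivalently'' is itself part of the assertion to be proved, and in the applications --- e.g.\ Proposition \ref{crit_dom_2} --- only the $\sigma$--form is ever produced). From a single field of uniformly forward--expanded directions $\tau_z$ you get, via the product formula of Lemma \ref{formula}, that every \emph{other} direction is forward--contracted; but that does not by itself yield a direction whose entire \emph{backward} orbit is uniformly expanded, which is what the second invariant subbundle requires. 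This is precisely where the paper injects complex analysis: having upgraded $\tau$ to an invariant continuous expansive section, Proposition \ref{contractive_section} gives uniform contraction of small discs around $\tau$ under $M^{-k}$; the unstable set $W^u(\tau(z))$ is then an increasing union of conformal discs with moduli of the intermediate annuli bounded below, hence biholomorphic to $\C$ by Proposition \ref{mod_cor} (Claim 2); its complement in $\Cb_z$ is therefore a single point, and that point \emph{is} the missing section $\sigma(z)$ (Claim 3). Alternatively, one can argue elementarily that the nested intersection $\bigcap_{t\ge0}M^{tk}\bigl(\Cb_{f^{-tk}(z)}\setminus D_r(\tau(f^{-tk}(z)))\bigr)$ is nonempty and that any point in it satisfies $g(-n,\xi)\ge C'\lambda^n$ by combining $g(-n,\tau_z)\le (C\lambda^n)^{-1}$ with the angle formula; either way, some argument of this kind must be supplied --- your proposal currently assumes the conclusion. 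Once both sections exist, your transversality and domination computations go through as written.
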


\smallskip

Theorem \ref{pr:ds_vs_hyp} will be proved in Subsection
\ref{ss:proof}. In the following subsections we explain a series of results necessaries for his proof.

\subsection{Conjugation of Cocycles and Global Trivialization Bundle}

A well known fact about holomorphics maps, is that topological (metrical) contraction of small disc around some point implies that the norm of its derivative is smaller than one and therefore also its multiplier is smaller than one. Since that projective cocycle is holomorphic in each fiber, to determinate if the norm of the multiplier is less to one in some point, it is suffices to determinate if this contract disc around this point. For that, it is natural to look for more simples cocycles which are conjugated to the initial one, and check if the new cocycle shrinks discs. The formal
notion of conjugation is the following definition.

\smallskip

\begin{defi}
Let $M,N:\P(X)\rightarrow\P(X)$ be two cocycles with
$M=(f,M_*)$ and $N=(g,N_*)$. We say that $M$ and $N$ are {\bf conjugated} if there exists a cocycle $H=(h,H_{*}):\P(X)\rightarrow\P(X)$ where $h:X\rightarrow X$ is an homeomorphism such that $H\circ M=N\circ H$.
\end{defi}

\smallskip

The preceding definition state that we have simultaneously the conjugations $hf(z)=gh(z)$ and $H_{f(z)}M_z(\xi)=N_{h(z)}H_z(\xi)$.

\smallskip

In what follows, we will only work with projective cocycles with an invariant (global) section. Whit this hypothesis, the following result establish that the bundle is trivial.

\smallskip

\begin{pr}
If $\,\P(X)$ has a global section, then $\P(X)$ is isometrically equivalent to the trivial bundle $X\times\Cb$.
\end{pr}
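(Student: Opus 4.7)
The plan is to use the given section $\sigma$ together with the hermitian metric on $TX$ to construct a fiberwise isometric bundle isomorphism $H: \P(X) \to X \times \Cb$.

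First, the section identifies a continuous rank-one complex subbundle $L \subseteq TX$ defined by $L_z := \sigma(z) \subset T_z$. Using the hermitian metric, I take the orthogonal complement $L^\perp$, obtaining an orthogonal splitting $TX = L \oplus L^\perp$ as a direct sum of continuous complex line subbundles.

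Second, for each $z \in X$ I choose unit vectors $e_z \in L_z$ and $f_z \in L_z^\perp$. The assignment $(a,b) \mapsto a e_z + b f_z$ is then a unitary isomorphism $\C^2 \to T_z$, which induces a projective bijection $\Cb \to \Cb_z$ by $[a:b] \mapsto [a e_z + b f_z]$. By the construction of the spherical metric from the hermitian inner product recalled in Appendix A, this bijection is a spherical isometry. Inverting pointwise yields the candidate fiberwise isometry $H_z:\Cb_z \to \Cb$ with $H_z(\sigma(z)) = 0$.

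Finally, I verify that the assignment $H(\xi) := (z, H_z(\xi))$ for $\xi \in \Cb_z$ defines a continuous global bundle isomorphism. Two choices of frames $(e_z, f_z)$ and $(\lambda e_z, \mu f_z)$ with $|\lambda|=|\mu|=1$ produce identifications that differ by the M\"obius rotation $w \mapsto (\mu/\lambda) w$, which is itself an isometry of $\Cb$ fixing $0$ and $\infty$. Hence the local choices patch into a continuous global bundle map and the inverse is continuous by the same reasoning.

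The main subtlety lies in this last step: checking that locally continuous frame choices assemble into a genuinely global continuous bundle isomorphism. This amounts to reducing the structure group of $\P(X)$ from $\Isom(\Cb)$ to the stabilizer $S^1$ of the marked point $\sigma$, after which the residual transition functions are all still isometries of $\Cb$ and therefore present no obstruction to obtaining a global isometric trivialization onto $X \times \Cb$.
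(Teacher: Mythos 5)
Your fiberwise construction is correct, and it is essentially the paper's: both proofs split $TX$ orthogonally along the line subbundle $L$ determined by $\sigma$, choose unitary frames adapted to the splitting, and let the induced M\"obius maps carry $\sigma$ and its antipodal section to $0$ and $\infty$. The genuine gap is in your last step. Reducing the structure group to the rotations $w\mapsto\theta w$ fixing $0$ and $\infty$ is \emph{not} ``no obstruction'': a bundle with structure group $S^1$ is precisely a complex line bundle, and it need not be trivial. Concretely, on an overlap your two frames satisfy $e^j_z=\lambda_{ij}(z)e^i_z$ and $f^j_z=\mu_{ij}(z)f^i_z$, so the two local trivializations differ by $w\mapsto(\mu_{ij}/\lambda_{ij})w$; the cocycle $\mu_{ij}/\lambda_{ij}$ represents the line bundle ${\rm Hom}(L,L^{\perp})$, and the local maps $H_z$ can be corrected to agree on overlaps if and only if that line bundle is trivial. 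This can fail: take $X=S^2$ and $TX=\mathcal{O}\oplus\mathcal{O}(-1)$, with $\sigma$ the section given by the summand $\mathcal{O}(-1)$. Then the total space of $\P(X)$ is the first Hirzebruch surface (the blow-up of $\C\P^2$ at a point), whose intersection form is odd, so it is not even homeomorphic to $S^2\times\Cb$, let alone isometrically bundle-equivalent to it. No patching argument can succeed there.

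The statement does hold, and your argument closes immediately, under the additional hypothesis that $TX$ (or at least ${\rm Hom}(L,L^{\perp})$) is a trivial bundle: Gram--Schmidt applied to a global frame then yields a global continuous unitary frame, and your $H_z$ is globally continuous at once. This is the situation in the paper's dynamical applications, where $TX$ is the restriction of $T\C^2$ to a compact invariant set. For comparison, the paper's own proof buries the same difficulty in the assertion that its auxiliary splitting $F_z$ (the orthogonal complement of $v_i-v_i^*$) does not depend on the local unit section $v_i$; in fact replacing $v_i$ by $\lambda v_i$ replaces $v_i-v_i^*$ by $\lambda v_i-\lambda^{-1}v_i^*$, which spans a different complex line unless $\lambda^2=1$. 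So the defect you flagged as ``the main subtlety'' is real, it is not resolved by observing that the residual transition functions are isometries, and it is in fact the entire content of the proposition.
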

\begin{proof}
Let $\sigma\in\Gamma(X,\P(X))$ be a global section and $E$ a splitting in $TX$ associated with this direction. Let us take $\sigma^*$ the global section associated with the direction $E^\perp$, then $\sigma^*(z)$ is the antipodal point of $\sigma(z)$ in the sphere $\Cb_z$.

\smallskip

\noindent
{\bf Claim:} {\it  For every $z\in X$ there exists a biholomorphism $H_z:\Cb_z\rightarrow\Cb$ such that is an isometry, $H_z(\sigma(z))=0$ and $H_z(\sigma^*(z))=\infty$.}

\smallskip

\noindent
{\bf Proof of Claim.} First, let $\bpl (U_i,\varphi_i)\, : \, i=1,\ldots,n\bpr$ be a family of local charts of bundle such that $X=\cup_iU_i$. Take $v_i\in\Gamma(U_i,TX)$ a local sections where $||v_i||=1$ and $v_i(z)\in E$. Let $L_z:T_z\rightarrow\C^2$ be the unique linear isometry such that $T_z(v_i(z))=(1,0)$ and $\det(L_z)=1$. The map $L_z$ is unique because the only element of the group $SU(2,\C)$ that fix the vector $(1,0)$ is the identity map. Since each $v_i$ is unique and varies continuously, we conclude that $z\mapsto L_z$ is continuous.

\smallskip

Now, take $v_i^*(z)=L_z^{-1}((0,1))\in\Cb_z$. We conclude that  $v^*_i\in\Gamma(U_i,TX)$, $||v^*_i||=1$, and $v^*_i(z)\in E^\perp$. Define the splitting
\[F_z=\bpl v\in T_z\, : \, (v|v_i-v^*_i)_z=0\bpr.\]
It is easy to see that $F_z$ is independent of the choice of $v_i$'s and $F$ is a continuous splitting. We conclude that $F$ define a global section $\tau\in\Gamma(X,\P(X))$, this is, for any $u\in F_z$ we have $\tau(z)=[u]$.

\smallskip

Finally, we define $H_z$ as the unique M$\rm{\ddot{o}}$bius transformation such that\linebreak $H_z(\sigma(z))=0$, $H_z(\sigma^*(z))=\infty$, and $H_z(\tau(z))=1$; more precisely we define $H_z([v])=[L_z(v)]$. This finishes the proof of the claim.

\smallskip

Continuing with the proof of the Proposition, if $(U,\phi)$ is a local of the bundle $\P(X)$, by continuity of the sections, the local expression in $U$ of $H_z$ is a continuous function. More precisely, there exist a continuous family
$\widetilde{H}:U\times\Cb\rightarrow U\times \Cb$ with
\[\widetilde{H}_z(w)=\frac{a_zw+b_z}{c_zw+d_z}\]
where the maps $z\mapsto a_z,\ldots,z\mapsto d_z$ are continuous, $H_z=\widetilde{H}_z\circ\phi$,
$\widetilde{H}_z\circ\widetilde{\sigma}(z)=0$, and
$\widetilde{H}_z\circ\widetilde{\sigma^*}(z)=\infty$, where $\widetilde{\sigma}=\sigma\circ\phi$ and
$\widetilde{\sigma^*}=\sigma^*\circ\phi$. It follows that the function $H=(id,H_*)$ is an homeomorphism and an isometry in each fiber.
\end{proof}

\smallskip

\begin{rem}\label{rem:unit:lift}
After previous proposition we can assume that the bundle $\P(X)$ is in fact the trivial bundle $X\times\Cb$. Moreover, given a section $\sigma\in\Gamma(X,\P(X))$ we can lift this section to the trivial bundle $X\times\C^2$ as a global section $v\in\Gamma(X,X\times\C^2)$ such that $||v||=1$ and if we write $v=(v_1,v_2)$ then
$\sigma(z)=(z,[v_1(z):v_2(z)])$; this helps us to find global expressions of the section in the projective bundle.
\end{rem}

\smallskip

With this remark we can define.

\smallskip

\begin{defi}\label{def:unit:lift}
A section $v$ as in Remark \ref{rem:unit:lift}, is called a {\bf unitary lift} of the section $\sigma$.
\end{defi}

\subsection{Equivalence of Contractive sections}

We denote the unit disc in $\C$ by $\D$. Given $\xi\in\Cb$ an $r>0$, we denote the {\it $\rho$--ball with center at $\xi$ and radius $r$} in the spherical metric by $B(\xi,r)$. For any isometry in the Riemann sphere $L$ with $L(0)=\xi\in\Cb$, the set $L(r\overline{\D})$ does not depend on $L$. We will denote this set by $D_r(\xi)$ and is called {\it disc of radius $r$ centered at $\xi$}. Moreover,  we have that for any $r$, the disc $D_r(\xi)$ is equal to $B(\xi,\varepsilon)$ where $\varepsilon$ satisfies the equation
\[\frac{r}{\sqrt{1+r^2}}=\sin\left(\frac{\varepsilon}{2}
\right).\]
This last equation goes from the relation between the chordal and spherical metric (see for example, \cite{conw}).

\smallskip

\begin{pr}\label{contractive_section}
Let $\sigma\in\Gamma(X,\P(X))$ be a $M$--invariant section. Then the following statement are equivalents:
\begin{itemize}
\item[{\it i.}]The section $\sigma$ is contractive.
\item[{\it ii.}]There exist $0<\eta<1$ and $k>0$ such that $g(k,\sigma(z))<\eta$ for all
$z$ in $X$.
\item[{\it iii.}]There exist $k>0$ and $r>0$ such that
$M^k_z(\overline{D}_r(\sigma(z)))\subset D_r(\sigma(f^k(z)))$.
\item[{\it iv.}]There exist $k>0$ and $R>0$ such that for all $0<r\leq R$, $M^k_z(\overline{D}_r(\sigma(z)))\subset D_r(\sigma(f^k(z)))$.
\end{itemize}
\end{pr}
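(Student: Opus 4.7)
The plan is to establish the cyclic implications (i) $\Rightarrow$ (ii) $\Rightarrow$ (iv) $\Rightarrow$ (iii) $\Rightarrow$ (i). The essential content is the equivalence between fibrewise multiplier decay and disc contraction, which I will handle via the Schwarz lemma together with compactness of $X$. As a preparatory reduction I would trivialise $\P(X)$ using the preceding proposition and, via continuous isometries sending each $\sigma(z)$ to $0\in\Cb$ (as constructed in the claim of that proof), conjugate $M^k_z$ into a M\"obius transformation $F_z:\Cb\to\Cb$ with $F_z(0)=0$, continuously depending on $z$. Proposition \ref{pr:multip} then identifies $|F_z'(0)|$ with $g(k,\sigma(z))$, and the disc $D_r(\sigma(z))$ is carried to $r\overline{\D}$.

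The implication (i) $\Rightarrow$ (ii) is immediate on taking $k$ so large that $C\lambda^k<1$. For (ii) $\Rightarrow$ (i), I would combine the cocycle identity $g(n+m,\xi)=g(n,M^n\xi)\cdot g(m,\xi)$ (from the remark after Definition \ref{defi:g_for_cocyles}) with the invariance $M^k(\sigma(z))=\sigma(f^k(z))$ and telescope to get $g(nk,\sigma(z))\le\eta^n$; for arbitrary $n=qk+r$ with $0\le r<k$, continuity of $g(r,\sigma(\cdot))$ on the compact $X$ produces a uniform bound on the remainder factor, whence $g(n,\sigma(z))\le C\lambda^n$ with $\lambda=\eta^{1/k}$.

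For (ii) $\Rightarrow$ (iv), I would expand $F_z(w)=F_z'(0)w+O(w^2)$ with the $O$-term uniformly bounded in $z$ by compactness and joint continuity of $(z,w)\mapsto F_z(w)$ near $w=0$; since $|F_z'(0)|<\eta<1$, a sufficiently small uniform $R>0$ then ensures $|F_z(w)|<|w|$ whenever $|w|\le R$, which is (iv). For (iii) $\Rightarrow$ (ii), if $F_z(r\overline{\D})\subset r\D$ for every $z$, then the continuous function $z\mapsto\sup_{|w|\le r}|F_z(w)|$ is strictly less than $r$ pointwise on the compact $X$, hence uniformly bounded by some $\rho<r$; the Schwarz lemma applied to $g_z(w):=F_z(rw)/r$ (which maps $\overline{\D}$ into $(\rho/r)\overline{\D}$ and fixes $0$) then yields $g(k,\sigma(z))=|F_z'(0)|\le\rho/r<1$ uniformly, proving (ii). The implication (iv) $\Rightarrow$ (iii) is immediate.

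The main obstacle is the passage from fibrewise to uniform-in-$z$ estimates in the equivalence (ii) $\Leftrightarrow$ (iii): both the Taylor expansion and the Schwarz lemma produce the desired bounds for free at each individual fibre, but extracting a single contraction radius $R$ (and dually a single contraction factor $\eta<1$) valid for every $z\in X$ relies crucially on compactness of $X$ combined with continuity of the cocycle, the section, and the trivialising isometries. Everything else in the chain of implications is either immediate or a direct consequence of the cocycle identity.
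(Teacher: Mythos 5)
Your proposal is correct and follows essentially the same route as the paper: both reduce to the normal form in which $\sigma$ becomes the zero section via the continuous fibrewise isometries of the trivialization, prove (i)$\Leftrightarrow$(ii) by telescoping the cocycle identity $g(n+m,\xi)=g(n,M^n\xi)g(m,\xi)$ together with compactness of $X$, and derive the disc statements from one-variable complex analysis made uniform in $z$. The only difference is cosmetic: your Schwarz/Cauchy estimate for (iii)$\Rightarrow$(ii) and the uniform Taylor expansion for (ii)$\Rightarrow$(iv) spell out steps the paper dispatches as ``clear'' or by a brief continuity-of-$g$ bootstrap.
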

\begin{proof}
Clearly $(i)$ implies $(ii)$. To see that $(ii)$ implies
$(i)$, define
\[C_j=\sup\bpl g(j,\sigma(z))\,:\,z\in
X\bpr\]
for $j=0,\ldots,k-1$. We conclude that for every $s\geq0$
\[g({sk+j},\sigma(z))=g({sk},\sigma(f^j(z)))g(j,\sigma(z))\leq
C_j\eta^s=C_j\eta^{-j/k}[\eta^{1/k}]^{sk+j}\leq
C\lambda^{sk+j}\]where
$C=\sup\bpl C_j\eta^{-j/k}\,:\,j=0,\ldots,k-1\bpr$ and
$\lambda=\eta^{1/k}<1$.

\smallskip

Also it is clear that $(ii)$ is equivalent with $(iii)$ and that $(iv)$ implies $(ii)$ and $(iii)$.

\smallskip

To prove that $(ii)$ implies $(iv)$, we consider $v=(v_1,v_2)$ the unitary lift of $\sigma$. Consequently, if $\sigma^*$ is the antipodal point of $\sigma$ then $v^*=(\overline{v}_{2},-\overline{v}_{1})$ is an unitary
lift of $\sigma^*$. We consider
\begin{equation}\label{matrix_02}
B_z=
\left(\begin{array}{cc}
       \overline{v}_{2}(z)&v_{1}(z)\\
       -\overline{v}_{1}(z)&v_{2}(z)
     \end{array}\right)
\end{equation}
and denote the M$\rm{\ddot{o}}$bius transformation related with them by $H_z$. It follows that $H_z$ is an isometry of the Riemann sphere.

\smallskip

Now we take $N_z=H^{-1}_{f(z)}\circ M_z\circ H_z$. If we define the cocycles $N$ and $H$ by $N=(f,N_*)$ and $H=(id,H_*)$, then $H\circ N=M\circ H$ and that the null section $\xi_0\equiv0$ is $N$--invariant. We conclude that $N_z$ has the form
\[N_z(\xi)=\frac{\xi}{\beta_z\xi+\alpha_z}.\] Notice that
\[g(1,\xi_0)=g(1,\xi_0,N)=\left|\frac{1}{\alpha_z}\right|\]
so from hypothesis there exist constants $0<\eta<1$ and $k\ge0$ such that $g(k,\xi_0(z))=|\alpha_z^k|^{-1}<\eta$ for every $z$ in $X$. Consequently there exists a $R>0$ such that for every $0<r\leq R$ and $z\in X$, we have $g({k},\xi)\leq \eta$ for every $\xi\in D_r(\xi_0(z))$. It follows that there exist a constant $0<\lambda<1$ and a $C>0$ such that $g({n},\xi)\leq C\lambda^n$ for every $\xi$ in $D_r(\xi_0(z))$ and $n\in\N$. The previous observation implies that $|N^n(\xi)|< C\lambda^nr$ for every $\xi \in D_r(\xi_0)$ and $n\ge1$, as required.
\end{proof}

\smallskip

\begin{cor}\label{local_manifold}
Let $\sigma$ and $R>0$ satisfying the item (iv) of the Proposition \ref{contractive_section}. Then for every $\xi\in D_r(\sigma(z))$ and $0<r\leq R$, we have
\[\lim_{n\rightarrow+\infty}\rho(M^n_z(\xi),M^n_z(\sigma(z)))=0\]
where $\rho$ is the spherical metric.
\end{cor}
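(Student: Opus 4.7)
The strategy is to transport the estimate already obtained in the proof of Proposition \ref{contractive_section} back through the isometric conjugation constructed there. Recall that in the step (ii) $\Rightarrow$ (iv), one builds a cocycle $H=(id,H_*)$ where each $H_z:\Cb\to\Cb_z$ is a M\"obius transformation which is an isometry of the Riemann sphere with $H_z(0)=\sigma(z)$ and $H_z(\infty)=\sigma^*(z)$, together with a conjugate cocycle $N$ satisfying $H\circ N=M\circ H$ for which the null section $\xi_0\equiv 0$ is $N$--invariant. In the same proof one shows that there exist $C>0$ and $0<\lambda<1$ such that
\[|N_z^n(\xi')|<C\lambda^n r\quad\text{for every }\xi'\in D_r(0)\text{ and every }n\ge 1.\]

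First, I would unwind the conjugation. From $H\circ N=M\circ H$ one gets $M_z=H_{f(z)}\circ N_z\circ H_z^{-1}$ and, by induction on $n$, $M_z^n=H_{f^n(z)}\circ N_z^n\circ H_z^{-1}$ for every $n\ge 0$. Set $\xi':=H_z^{-1}(\xi)$. Since $H_z^{-1}$ is an isometry of the Riemann sphere sending $\sigma(z)$ to $0$, it maps the spherical disc $D_r(\sigma(z))$ onto $D_r(0)$, so $\xi'\in D_r(0)$. Moreover $N_z^n(0)=0$ because $\xi_0$ is $N$--invariant, and $M_z^n(\sigma(z))=\sigma(f^n(z))=H_{f^n(z)}(0)$ because $\sigma$ is $M$--invariant. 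Using that $H_{f^n(z)}$ is a fiberwise spherical isometry, I would then compute
\[\rho\bigl(M_z^n(\xi),M_z^n(\sigma(z))\bigr)=\rho\bigl(H_{f^n(z)}(N_z^n(\xi')),H_{f^n(z)}(0)\bigr)=\rho(N_z^n(\xi'),0).\]

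Second, I would invoke the estimate quoted above together with the fact that on any bounded neighborhood of $0\in\C$ the spherical metric $\rho$ and the euclidean metric are comparable. Since $|N_z^n(\xi')|<C\lambda^n r\to 0$, this forces $\rho(N_z^n(\xi'),0)\to 0$, which combined with the previous display yields the required convergence.

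The argument is essentially bookkeeping; the only mild pitfall I anticipate is keeping the direction of the conjugation straight (i.e.\ whether $H_z$ sends $0$ to $\sigma(z)$ or the reverse) and verifying that the spherical disc $D_r(\sigma(z))$ is preserved by $H_z^{-1}$, which is automatic because each $H_z$ is a spherical isometry.
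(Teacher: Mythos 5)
Your proposal is correct and follows essentially the same route the paper intends: the corollary is meant to be read off from the final step of the proof of Proposition \ref{contractive_section} ((ii) $\Rightarrow$ (iv)), where the conjugated cocycle $N$ is shown to satisfy $|N^n_z(\xi')|<C\lambda^n r$ on $D_r(0)$, and transporting this back through the fiberwise spherical isometries $H_z$ gives exactly the stated convergence. Your unwinding of the conjugation and the comparison of the spherical and euclidean metrics near $0$ are the right bookkeeping steps.
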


\subsection{Expansive/Contractive Direction}

In this subsection we explain some properties of the function $g$. The main goal is establish the uniqueness of direction asymptotically expansive. This fact will be used recurrently in this work.

\smallskip

\begin{lm}\label{formula}If $\xi_i$ with $i=1,2$ are two different directions in $\Cb_z$ and $u_i$ is an unitary vector that generate the direction $\xi_i$ for $i=1,2$, then
\[g(n,\xi_1)g(n,\xi_2)=\left(\frac{\sin(\measuredangle(A^n_zu_1,
A^n_zu_2))}{\sin(\measuredangle(u_1,u_2))}\right)^2,\]
for any $n\in \Z$.
\end{lm}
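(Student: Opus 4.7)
The plan is to reduce everything to a classical identity relating the determinant of a linear map between two--dimensional inner product spaces with the ``areas'' it scales. The key identity, valid for any two vectors $v,w$ in a $2$--dimensional Hermitian space, is the Lagrange--type formula
\[
\|v\|^2\|w\|^2\sin^2(\measuredangle(v,w))=\|v\|^2\|w\|^2-|(v|w)|^2,
\]
where $\cos(\measuredangle(v,w))=|(v|w)|/(\|v\|\|w\|)$ is the angle in the sense of the Hermitian inner product (equivalently, the Fubini--Study distance between $[v]$ and $[w]$).

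First, I would use the definition of $g$, namely $g(n,\xi)=|\det(A^n_z)|/\|A^n_z v_\xi\|^2$ with $v_\xi$ unitary and $[v_\xi]=\xi$, so that, taking $v_{\xi_i}=u_i$,
\[
g(n,\xi_1)\,g(n,\xi_2)=\frac{|\det(A^n_z)|^2}{\|A^n_zu_1\|^2\,\|A^n_zu_2\|^2}.
\]
Next I would recognize $\|v\|^2\|w\|^2-|(v|w)|^2$ as $|v\wedge w|^2$ in the induced metric on $\Lambda^2$ of the fibre; equivalently, it is the squared norm of the Gram determinant. Since the fibres are complex two--dimensional, $\Lambda^2 T_z$ is one--dimensional and the action of $A^n_z$ on it is multiplication by $\det(A^n_z)$. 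This gives the decisive transformation rule
\[
|A^n_zu_1\wedge A^n_zu_2|^2=|\det(A^n_z)|^2\,|u_1\wedge u_2|^2.
\]

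Combining these two observations, and using that $u_1,u_2$ are unitary (so $|u_1\wedge u_2|^2=\sin^2(\measuredangle(u_1,u_2))$),
\[
\|A^n_zu_1\|^2\|A^n_zu_2\|^2\sin^2(\measuredangle(A^n_zu_1,A^n_zu_2))=|\det(A^n_z)|^2\sin^2(\measuredangle(u_1,u_2)).
\]
Dividing by $\|A^n_zu_1\|^2\|A^n_zu_2\|^2\sin^2(\measuredangle(u_1,u_2))$ (all positive because $\xi_1\ne\xi_2$ and $A^n_z$ is a complex isomorphism, so the images are again linearly independent) gives exactly the stated identity. The formula holds for every $n\in\Z$, positive or negative, because the same algebraic argument applies to the isomorphism $A^n_z$.

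The only delicate point is matching the author's convention for $\measuredangle$ in a complex two--dimensional fibre with the identity $\|v\|^2\|w\|^2-|(v|w)|^2=\|v\|^2\|w\|^2\sin^2(\measuredangle(v,w))$; once one fixes the Hermitian--angle convention (equivalently, the spherical/Fubini--Study distance in $\C\P^1$ used throughout the paper), everything is routine linear algebra. No dynamical input and no continuity is needed: the lemma is purely a fibrewise statement about the linear isomorphism $A^n_z$.
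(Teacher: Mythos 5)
Your proposal is correct and is essentially the paper's own argument: the paper's area function $\phi(x,y)=|x||y|\sin(\measuredangle(x,y))=\sqrt{\det([x\,y]^*[x\,y])}=|\det([x\,y])|$ is precisely your Gram/wedge-product identity $|x\wedge y|$, and the scaling rule $\phi(Ax,Ay)=|\det(A)|\,\phi(x,y)$ is your statement that $A^n_z$ acts on $\Lambda^2T_z$ by $\det(A^n_z)$. The only difference is notational (exterior powers versus parallelogram areas), so no further comparison is needed.
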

\begin{proof}
Let $x, y\in\Ctwo$ and denote the area of the polygon formed by the vertices $0$, $x$, $x+y$ and $y$
by $\phi(x,y)$. Then we have the equality \begin{equation}\label{01}
\phi(x,y)=|x|\cdot|y|\cdot\sin(\measuredangle(x,y))=\sqrt{\det([x\, \,
y]^*\cdot[x\, \, y])}=|\det([x\, \, y])|,
\end{equation}
where $[x\, \, y]$ is a column matrix and $[x\, \, y]^*$ denote its
transposed conjugate. Then, it is easy to see that
$\phi(Ax,Ay)=|\det(A)|\phi(x,y)$ for any linear map $A$ in
$\Ctwo$.

\smallskip

Then from equation (\ref{01}) we have that
\[\left(\frac{\sin(\measuredangle(A_z\sp nu_1,A_z\sp
nu_2))}{\sin(\measuredangle(u_1,u_2))}
\right)^2=\frac{\phi(A_z\sp nu_1,A_z\sp nu_2)^2/|A_z\sp
nu_1|^2\cdot|A_z\sp nu_2|^2}{\phi(u_1,
u_2)^2/|u_1|^2\cdot|u_2|^2}
=\frac{|\det(A_z\sp n)|^2}{|A_z\sp nu_1|^2\cdot|A_z\sp nu_2|^2}.\]
According to the equation (\ref{g_n_iterate}) and the previous
equality, it follows that
\[g(n,\xi_1)g(n,\xi_2)=\frac{|\det(A_z\sp n)|^2}{|A_z\sp
nu_1|^2\cdot|A_z\sp
nu_2|^2}=\left(\frac{\sin(\measuredangle(A_z\sp nu_1,A_z\sp
nu_2))}{\sin(\measuredangle(u_1,u_2))}
\right)^2.\]
\end{proof}

\smallskip

\begin{lm}\label{uniq_directions}
An expansive (contractive) direction is unique.
\end{lm}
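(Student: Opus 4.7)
The plan is to derive a contradiction from the existence of two distinct expansive directions at the same fibre using the identity in Lemma \ref{formula}. Suppose, for contradiction, that $\xi_1,\xi_2\in\Cb_z$ are two distinct expansive directions at the same point $z\in X$. By the definition following Theorem \ref{pr:ds_vs_hyp}, there exist constants $C>0$ and $\lambda>1$ such that
\[g(n,\xi_i)\ge C\lambda^n\quad\text{for every } n>0,\ i=1,2.\]
Multiplying these two estimates yields $g(n,\xi_1)\cdot g(n,\xi_2)\ge C^2\lambda^{2n}$, which tends to $+\infty$ as $n\to+\infty$.

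On the other hand, choosing unitary vectors $u_1,u_2$ with $[u_i]=\xi_i$, Lemma \ref{formula} gives
\[g(n,\xi_1)\cdot g(n,\xi_2)=\left(\frac{\sin(\measuredangle(A^n_zu_1,A^n_zu_2))}{\sin(\measuredangle(u_1,u_2))}\right)^2\le\frac{1}{\sin^2(\measuredangle(u_1,u_2))},\]
since $\xi_1\neq\xi_2$ forces $\sin(\measuredangle(u_1,u_2))\neq 0$, and the numerator is bounded above by $1$. This upper bound is independent of $n$, contradicting the exponential growth deduced above. Hence the expansive direction at $z$ is unique.

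The contractive case follows immediately by applying the previous argument to the cocycle $M^{-1}$, since a contractive direction for $M$ is, by definition, an expansive direction for $M^{-1}$. The main (indeed only) conceptual step is the observation that Lemma \ref{formula} forces the product $g(n,\xi_1)\cdot g(n,\xi_2)$ to be uniformly bounded for any two distinct directions; everything else is bookkeeping. There is no serious obstacle here, as the key algebraic identity has already been established.
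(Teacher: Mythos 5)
Your proof is correct and follows essentially the same route as the paper: both arguments multiply the two expansive estimates to get unbounded growth of $g(n,\xi_1)g(n,\xi_2)$ and contradict the uniform bound coming from Lemma \ref{formula}. (Your constant $1/\sin^2(\measuredangle(u_1,u_2))$ is in fact the cleaner form of the bound the paper uses.)
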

\begin{proof}
Take $z\in X$ and two different directions that are expanded for the future $\xi_1, \xi_2\in\Cb_z$, that is, there exist $C>0$ and $\lambda>1$ such that $g(n,\xi_i)\ge C\lambda\sp n$ for each $n\ge0$, and $i=1,2$. If $u_i$ is an unitary vector that generate the direction $\xi_i$ for $i=1,2$, we conclude that $\measuredangle(u_1,u_2)>0$.
From the preceding Lemma, we have that
\[C\sp 2\lambda^{2n}\leq
g(n,\xi_1)g(n,\xi_2)=\left(\frac{\sin(\measuredangle(A^n_zu_1,
A^n_zu_2))}{\sin(\measuredangle(u_1,u_2))}\right)^2<\frac{1}{
\sin(\measuredangle(u_1,u_2))}\]
which is a contradiction. For the case that we have expansion for the past, the same proof holds.
\end{proof}

\smallskip

\subsection{Module}
A {\bf double connected domain} in $\Cb$ is a open connected set $U$ such that its complement has two connected component. The definition of the module of a double connected domain is based in the following mapping theorem: {\it Every double connected domain $U$ is biholomorphic to a ring domain of the form
\[A(r_1,r_2)=\bpl z\in\C\, : \,0\leq r_1<|z|< r_2\leq\infty\bpr\]
and is called a canonical image of $U$.}

\smallskip

If $r_1>0$ and $r_2<\infty$ for one canonical image of $U$, then the ratio of the radii $r_2/r_1$ is the same for all canonical images of $U$. Then the number
\[\mod(U)=\log\left(\frac{r_2}{r_1}\right)\]
determines the conformal equivalence class of $U$ and is called the module of $U$. Otherwise, we define $\mod(U)=\infty$ and this happens if and only if at least one boundary component of $U$ consists of a single point.

\smallskip

The following proposition will be crucial in the proof of Theorem \ref{pr:ds_vs_hyp}.

\smallskip

\begin{pr}\label{mod_cor}
Let $\, D_1,\, D_2,\, D_3,\,\ldots$ be conformal discs in $\Cb$ such that for every $i\geq1$ we have $\overline{D}_i\subset D_{i+1}$. If there exist a constant $\kappa>0$ such that $\mod(\textrm{{\rm int}}(D_{i+1}
\setminus \overline{D}_i))\geq\kappa$, then the set
$D=\cup_nD_n$ is biholomorphic to $\C$.
\end{pr}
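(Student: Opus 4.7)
The plan is to invoke the Uniformization Theorem and rule out the other two possibilities using the modulus hypothesis. First I would observe that $D=\bigcup_n D_n$ is open and simply connected in $\Cb$: each $D_n$ is simply connected, the union is open, and any loop in $D$ is compact, hence contained in some $D_n$, where it is contractible. Thus by Uniformization, $D$ is biholomorphic to exactly one of $\D$, $\C$, or $\Cb$.

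The case $D=\Cb$ is ruled out by compactness: if it held, the open cover $\{D_n\}_{n\ge 1}$ would admit a finite subcover, forcing $D_N=\Cb$ for some $N$, which contradicts the fact that $D_N$ is conformally a disc and therefore not homeomorphic to the sphere.

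The substantive step is excluding $D\cong\D$. Assume for contradiction that $\phi\colon D\to\D$ is a biholomorphism and set $K_n=\phi(D_n)$. Because $\phi$ is a homeomorphism and $\overline{D}_n\subset D_{n+1}$, one has $\overline{K}_n\subset K_{n+1}\subset\D$ (closures in $\D$), and $\overline{K}_1$ is a compact connected subset of $\D$ with nonempty interior. In particular the doubly connected domain $\D\setminus\overline{K}_1$ has both boundary components non-degenerate continua, so its modulus $\mu$ is finite. Since the modulus is a conformal invariant, $\mod(\mathrm{int}(K_{i+1}\setminus\overline{K}_i))\ge\kappa$ for every $i\ge 1$. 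The nested annuli $\mathrm{int}(K_{i+1}\setminus\overline{K}_i)$ for $i=1,\dots,n$ are pairwise disjoint and each separates the two boundary components of the annulus $A_n=\mathrm{int}(K_{n+1}\setminus\overline{K}_1)$, so Gr\"otzsch's inequality yields
\[\mod(A_n)\;\ge\;\sum_{i=1}^{n}\mod(\mathrm{int}(K_{i+1}\setminus\overline{K}_i))\;\ge\;n\kappa.\]
On the other hand, $A_n$ is essentially embedded in $\D\setminus\overline{K}_1$, so monotonicity of the modulus gives $\mod(A_n)\le\mu$. Letting $n\to\infty$ contradicts $\mu<\infty$, and hence $D\cong\C$.

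The main obstacle is setting up the topological configuration so that Gr\"otzsch's inequality applies cleanly in both directions: on one hand additivity along the nested chain of annuli, on the other hand the upper bound by $\mu$. Both rely on checking that each $\mathrm{int}(K_{i+1}\setminus\overline{K}_i)$ genuinely separates $\overline{K}_1$ from $\partial K_{n+1}$ in $A_n$, and that $\overline{K}_1$ is a continuum of positive diameter, so that $\D\setminus\overline{K}_1$ has finite modulus. These facts are immediate from the hypothesis $\overline{D}_i\subset D_{i+1}$ and from each $D_i$ being a nonempty conformal disc.
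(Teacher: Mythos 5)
Your argument is correct and is the standard uniformization-plus-Gr\"otzsch-inequality proof of this fact; the paper itself gives no proof, deferring to \cite{M,LV}, and your write-up supplies essentially the argument found there (superadditivity of the modulus over the disjoint essential annuli $K_{i+1}\setminus\overline{K}_i$ versus the finite modulus of $\D\setminus\overline{K}_1$). The only implicit point worth flagging is that the sets $D_{i+1}\setminus\overline{D}_i$ are tacitly assumed to be genuine ring domains, but that assumption is already built into the proposition's hypothesis that their moduli are defined.
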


\smallskip

See \cite{M,LV} for details.

\subsection{Proof of Theorem \ref{pr:ds_vs_hyp}}\label{ss:proof}

We define the {\bf stable set} at the point $\xi\in\Cb_z$  of a cocycle $M$ as the set
\[W^s(\xi)=\left\{ w\in \Cb_z\,: \,
\lim_{n\rightarrow\infty}\rho(M_z^n(w),M_z^n(\xi))=0\right\}\]
and the {\bf local stable set of size} $\varepsilon>0$ by
\[W^s_\varepsilon(\xi)=\left\{ w\in W^s(\xi)\,: \,
\rho(M_z^n(w),M_z^n(\xi))<\varepsilon \,,\,
\, \textrm{for all}\, \, n\in \N\right\}.\]
The unstable set is defined in the same way, for the inverse cocycle $M^{-1}$.

\smallskip

We can write the stable (resp. unstable) set in terms of backward (resp. forward) iterates of the local stable (resp. local unstable) sets. In fact, given $\varepsilon>0$ we have
\[W^s(\xi)=\bigcup_{n=0}^\infty
M^{-n}_z\left(W^s_\varepsilon(M^n(\xi))\right)\, \, \textrm{and}\, \, \,W^u(\xi)=\bigcup_{n=0}^\infty
M^{n}_z\left(W^u_\varepsilon(M^{-n}(\xi))\right).\]

\smallskip

Also we have the following statement.

\smallskip

\begin{lm}\label{lm:s-manifold}
Let $\sigma$ be a contractive section for $M$. Then there exist constants $k\ge0$ and $r>0$, such that
\[W^s(\sigma(z))=\bigcup_{t\geq0}M^{-tk}_z\left(D_r(\sigma(f^{tk}
(z)))\right).\]
\end{lm}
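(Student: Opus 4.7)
The plan is to derive the equality from Proposition \ref{contractive_section}(iv) (which guarantees the existence of $k$ and a small chordal radius $R>0$ so that $M^k$ shrinks $\overline{D}_r(\sigma(\cdot))$ into $D_r(\sigma(f^k(\cdot)))$ for every $0<r\le R$) and from Corollary \ref{local_manifold} (which says that points in such a disc are already asymptotic to $\sigma$ in the spherical metric). I would fix once and for all such a $k$ and any $0<r\le R$, and then verify the two inclusions separately.

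For the inclusion $\supseteq$, I would take $w\in M^{-tk}_z(D_r(\sigma(f^{tk}(z))))$ for some $t\ge 0$. Setting $y=M^{tk}_z(w)\in D_r(\sigma(f^{tk}(z)))$, Corollary \ref{local_manifold} applied at the base point $f^{tk}(z)$ yields
\[\lim_{n\to+\infty}\rho\bigl(M^{n}_{f^{tk}(z)}(y),\,M^{n}_{f^{tk}(z)}(\sigma(f^{tk}(z)))\bigr)=0.\]
Using the cocycle identity $M^{n}_{f^{tk}(z)}\circ M^{tk}_z=M^{n+tk}_z$ and the $M$-invariance of $\sigma$, this rewrites as $\rho(M^{n+tk}_z(w),M^{n+tk}_z(\sigma(z)))\to 0$, which is precisely the definition of $w\in W^s(\sigma(z))$.

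For the inclusion $\subseteq$, I would translate the chordal radius $r$ into an $\varepsilon>0$ in the spherical metric via the relation
\[\frac{r}{\sqrt{1+r^2}}=\sin\!\left(\frac{\varepsilon}{2}\right),\]
so that $D_r(\sigma(f^n(z)))=B(\sigma(f^n(z)),\varepsilon)$ for every $n$. If $w\in W^s(\sigma(z))$, then $\rho(M^n_z(w),M^n_z(\sigma(z)))\to 0$, so there exists $N$ with $\rho(M^n_z(w),\sigma(f^n(z)))<\varepsilon$ for all $n\ge N$. Choosing $t$ large enough that $tk\ge N$ gives $M^{tk}_z(w)\in D_r(\sigma(f^{tk}(z)))$, i.e.\ $w\in M^{-tk}_z(D_r(\sigma(f^{tk}(z))))$, as desired.

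The argument is therefore essentially a bookkeeping exercise combining the two previous results; the only delicate point is making sure one translates between the chordal discs $D_r$ used in Proposition \ref{contractive_section} and the spherical balls that the definition of $W^s$ provides, which is handled by the explicit formula linking the chordal radius and the spherical radius recalled just before Proposition \ref{contractive_section}. Once that identification is made, both inclusions are immediate, and nothing further than the $M$-invariance of $\sigma$ and the cocycle relation is needed.
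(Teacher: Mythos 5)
Your proof is correct and follows essentially the same route as the paper's (which simply invokes Corollary \ref{local_manifold} to identify $D_r(\sigma(\cdot))$ with the local stable set and then uses the identity $W^s(\xi)=\bigcup_n M^{-n}_z(W^s_\varepsilon(M^n(\xi)))$ displayed just before the lemma). You merely spell out the two inclusions and the chordal--spherical radius translation that the paper leaves implicit.
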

\begin{proof}
From Corollary \ref{local_manifold} it follows that $D_r(\sigma(f^{tk}(z)))$ is the local stable set. Since this is uniformly contractive by the cocycle, we have our Lemma.
\end{proof}

\smallskip

\begin{proof}[{\bf Proof of Theorem \ref{pr:ds_vs_hyp}}]

\noindent
Our proof goes through a series of claims.

\smallskip

\noindent
{\bf Claim 1:} {\it A linear cocycle $A$ has dominated splitting if and only if the cocycle $M([v])=[Av]$ is hyperbolic.}
\begin{proof}[{\bf Proof of Claim 1}] Suppose that $A$ has dominated splitting.

\smallskip

First, let $\{(U_i,\varphi_i):i=1,\ldots,l\}$ be a family of local chart of the bundle $TX$ such that $X=\cup_i U_i$. Let $u_i\in\Gamma(U_i,E)$, and let $v_i\in\Gamma(U_i,F)$ such that $||u_i(z)||_z=||v_i(z)||_z=1$ for every $z\in
U_i$ and $i=1,\ldots,l$. For each $z\in U_i$ define \[\tau(z)=(z,[u_i(z)])\, \, \textrm{ and } \, \, \sigma(z)=(z,[v_i(z)]).\]
It follows that both $\sigma$ and $\tau$ are well--defined continuous global section in $TX$, which are $M$--invariant.

\smallskip

Now, let $K(\beta,F)$ be an $A\sp l$--invariant cone field and denote the set \linebreak$p_I(K(\beta,F_z))$ by $D_z$. Without loss of generality, we can assume that $l=1$ (See Remark \ref{rem:ds}.) We recall that $D_z$ is a closed conformal disc, that is, a biholomorphism image of the closed unitary disc. Note that:
\begin{enumerate}
\item $\sigma(z)\in D_z$,
\item $M_z(D_z)\subset {\rm int}(D_{f(z)})$,
\item $\sigma(z)=\bigcap_{n\geq0}M^{n}_{f^{-n}(z)}(D_{f^{-n}(z)})$.
\end{enumerate}
Item 1 and item 2 it follows directly from definition. Item 3 it follows from equation (\ref{invariant_splitting}). Note also that item 2, implies that $\cap_{n=0}\sp m M^{n}_{f^{-n}(z)}(D_{f^{-n}(z)})=M^{m}_{f^{-m}(z)}(D_{f^{-m}(z)})$. From compactness of $X$ there exists $r>0$ such that $D(\sigma(z),r)\subset D_z$. We conclude that there exists $k>l$ such that
\[M_{f^{-k}(z)}^{k}(D(\sigma({f^{-k}(z)}),r))\subset
\textrm{int}(D(\sigma(z),r)).\]

\smallskip

Finally, since the splitting varies continuously (and consequently the cone fields) and by compactness of $X$, we conclude that the constant $k$ is independent of the choice of the point $z\in X$. By Proposition \ref{contractive_section}, we conclude that $\sigma$ is contractive. An argument similar applied to $\tau$ and $M\sp{-1}$, implies the hyperbolicity of $M$.

\smallskip

Conversely we suppose that $M$ is hyperbolic.

\smallskip

First, we denote by $E_z$ and $F_z$ the sets $p_I^{-1}(\tau(z))\cup\bpl 0\bpr$ and
$p_I^{-1}(\sigma(z))\cup\bpl 0\bpr$ respectively. Clearly this define a $A$-invariant splitting.
Take $u$ and $v$ unitary lifts of $\tau$ and $\sigma$ respectively (See Definition \ref{def:unit:lift}.) Remember that the unitary lift are element of the section of the trivial vector bundle $X\times\C^2$.

\smallskip

Now, we construct a hyperbolic cocycle $N$ conjugated
with $M$. Write $u=(u_1,u_2)$ and $v=(v_1,v_2)$. Define
$H_z$ as the M$\rm{\ddot{o}}$bius transformation related with the matrix
\begin{equation}\label{matrix_01}
B_z=\left(\begin{array}{cc}
       u_{1}(z)&v_{1}(z)\\
       u_{2}(z)&v_{2}(z)
     \end{array}\right),
\end{equation}
define $N_z=H^{-1}_{f(z)}\circ M_z\circ H_z$, and also define the cocycles $N$ and $H$ by $N=(f,N_*)$ and $H=(id,H_*)$. Clearly $H\circ N=M\circ H$. We assert that the cocycle $N$ is hyperbolic. In fact,  note that by construction, the sections $\xi_\infty$ (resp. $\xi_0$)  that associates at each point the point at infinity (resp. the zero point), are $N$--invariant. Consequently, we have that $N_z(\xi)=\lambda_z\xi$. Since $M$ is hyperbolic, then $M^k$ shrinks small closed disc around $\sigma$ and expands small disc around $\tau$ for some $k\geq0$ (see Proposition \ref{contractive_section} ). Thus a similar phenomena holds for $N^k$. We conclude that $|\lambda_z^k|$ is less than one for every $z\in X$. Moreover, the compactness of $X$ allows to take this constant  uniformly in $z$, that is, there exist $0<\eta<1$ such that $|\lambda_z^k|<\eta$. Since $g(k,\xi_0(z))=|\lambda^k_z|<\eta$ and $g(k,\xi_\infty(z))=|\lambda^{-k}_z|>\eta\sp{-1}>1$ it follows the hyperbolicity of $N$.

\smallskip

Finally, let $\bpl (U_i,\varphi_i)\bpr$ be a finite family of local chart of the bundle $TX$ such that $X=\cup_i U_i$. Let  $\widetilde{u}_i, \widetilde{v}_i\in\Gamma(U_i,TX^*)$ such that $\widetilde{u}_i\in E_z$, $\widetilde{v}_i\in F_z$ and both are unitary. Then we have
\[p_I(\widetilde{u}_i)=\tau=(id,[u])\, \, \, \, \textrm{and}\, \, \,
\,
p_I(\widetilde{v}_i)=\sigma=(id,[v]).\]
Take $z\in U_i$ with $f^n(z)\in U_j$ and write
$A^n_z\widetilde{u}_i(z)=k^n_z\widetilde{u}_j(f^n(z))$,
$A^n_z\widetilde{v}_i(z)=l^n_z\widetilde{v}_j(f^n(z))$ and
$\widetilde{A}^n_z=(\varphi_j)^{-1}_z\circ A^n_z\circ(\varphi_i)_z$
then
\[B^{-1}_{f^n(z)}\widetilde{A}^n_zB_z=\left(
\begin{array}{cc}
k^n_z&0\\
0&l^n_z
\end{array}\right)
\]
is the matrix related with the M$\rm{\ddot{o}}$bius transformation $N_z\sp n$. We conclude that $\lambda^n_z=k^n_z/l^n_z$. Now taking
$u^\prime=su_i(z)$ in $E_z$ and
$v^\prime=tv_j(f^n)$ in $F_{f^n(z)}$ with $|s|=|t|=1$, then
\[||A^n_zu^\prime||\cdot||A^{-n}_{f^n(z)}
v^\prime||=||A^n_zu_i(z)||\cdot||A^{-n}_{f^n(z)}
v_j(f^n(z))||=|k^n_z|\cdot|l^n_z|^{-1}=|\lambda^n_z|\leq
C\lambda^n\]for $n\geq1$, so $A$
has dominated splitting.
\end{proof}

\smallskip

\noindent
{\bf Claim 2:} {\it Let $\sigma$ be a contractive section for $M$ (resp. $\tau$ be a expansive section). Then for every $z\in X$, $W^s(\sigma(z))$ (resp. $W^u(\tau(z))$) is biholomorphic to $\C$.}
\begin{proof}[{\bf Proof of Claim 2}]
Take $k$ and $r$ as in Lemma \ref{lm:s-manifold} and define
\[D_t=M^{-tk}_z\left(D_r(\sigma(f^{tk}(z)))\right).\]
It is clear that $D_{t-1}\subsetneq D_{t}$ and the function $M^{tk}_z$ maps biholomorphically $D_t\setminus D_{t-1}$ on
\[A_t=D_r(\sigma(f^{tk}(z)))\setminus
M^{k}_{f^{(t-1)k}(z)}(D_r(\sigma(f^{(t-1)k}(z)))),\]
so the module $\mod(D_t\setminus D_{t-1})$ and $\mod(A_t)$ are equal. We take $0<\eta<1$ uniformly in $X$ such that $g(k,\sigma)\leq\eta$. It follows that $\mod(A_t)\geq\log(1/\eta)$. By Corollary \ref{mod_cor}, the claim is proves.
\end{proof}

\smallskip

\noindent
{\bf Claim 3:} {\it A projective cocycle $M$ is hyperbolic if and only if at least one of the following equivalent conditions hold:
\begin{itemize}
\item[a.] There exist a section that is a contraction.
\item[b.] There exist a section that is an expansion.
\end{itemize}}
\begin{proof}[{\bf Proof of Claim 3}]
We need only show that (b) imply (a), because the reciprocal direction follows using the inverse cocycle.

\smallskip

From the previous Claim, we have that $\Cb_z\setminus
W^s(\sigma(z))=\bpl \tau(z)\bpr$. Since $W^s$ varies continuously and is $M$--invariant, it follows that $\tau$ is a $M$--invariant section. By the definition of $\tau$, it follows that small disc around of $\tau$ are contracted
uniformly by $M^{-1}$, and it follows that $\tau$ is an expansion.
\end{proof}

\smallskip

\noindent
{\bf Claim 4:} {\it If a projective cocycle $M$ satisfy item 4 of this Theorem, then there exist a section that is a expansion.}
\begin{proof}[{\bf Proof of Claim 4}]First, we denote by $u_z$ some unitary vector that define the direction $\tau_z$. Since we have uniqueness of an expansive direction (Lemma \ref{uniq_directions}), we conclude that $M(\tau_{f^{-1}(z)})=\tau_z$.

\smallskip

Now, let $z_n\rightarrow z$ in $X$, then
$\tau_{z_n}\rightarrow \tau_z$. In fact, by compactness there exists some adherence point for the sequence $(\tau_{z_n})_n$, named $\tau'\in\Cb_z$ that is expansive for the future. From Lemma \ref{uniq_directions}, it follows that $\tau'$ is equal to $\tau_z$, and hence we have that the function $z\mapsto\tau_z$ is continuous.

\smallskip

Finally, from the hypothesis we have that $\tau$ is an expansion. Then, from Claim 3 and Claim 1 we have that $X$ has dominated splitting.
\end{proof}
This conclude the proof of Theorem \ref{pr:ds_vs_hyp}.
\end{proof}

\smallskip

\section{Critical Points and Main Theorem}
To give a precise definition of critical point and state out Theorem A, firstly we introduce some technical notations. Let $\Delta$ be the set defined by
\[\Delta=\{\beta=(\beta_-,\beta_+)\, : \, 0<\beta_+\le\beta_-<1\}.\]
In $\Delta$ we have a partial order. In fact, let $\alpha,\beta\in\Delta$, we say that $\beta\ge\alpha$ if and only if $\beta_\pm\sp{\pm1}\ge\alpha_\pm\sp{\pm1}$.

\smallskip

\begin{defi}
\begin{enumerate}
\item Let $\beta\in\Delta$ and $n_-\le0\le n_+$ integers. We say that $x\in X$ is a {\bf $\beta$--critical point at the times $(n_-,n_+)$} (for the linear cocycle $A$), if there exist a direction $\xi_x\in\Cb_x$ such that for every $n\ge0$ we have that $g(\pm n,M\sp{{n}_\pm}\xi_x)\ge\beta_{\pm}\sp{\pm n}$. The direction $\xi_x$ will be called {\bf critical direction}.

\item We say that $x$ is a {\bf $\beta$--critical point} if this is a $\beta$--critical point at the times $(0,0)$. We denote the set of all $\beta$--critical point by $\crit{\beta}$.

\item We say that $y$ is a {\bf $\beta$--critical value} if $y$ is a $\beta$--critical point for the linear cocycle $A\sp{-1}$. We denote the set of all $\beta$--critical value by $\cv{\beta}$.
\end{enumerate}
\end{defi}

\begin{rem}\label{crit_inq}
It follows easily from the previous definition that if $\alpha,\beta\in\Delta$ such that $\beta\ge\alpha$ then $\crit\beta\subset\crit\alpha$.
\end{rem}

\smallskip

Note that from Lemma \ref{uniq_directions}, the critical direction is unique. In Section \ref{sec:6} we explain in details a series of properties of critical points. For the moment, we have the following property.

\smallskip

\begin{rem}The previous definition say that \[\crit{\beta}=\{x\in X\,:\,\exists\, \xi_x\, \textrm{ such that }\, g(\pm n,\xi_x)\ge\beta_{\pm}\sp{\pm n}\, , \, \, n\ge0\}\]
and
\[\cv{\beta}=\{x\in X\,:\,\exists\, \xi_x\, \textrm{ such that }\, g(\mp n,\xi_x)\ge\beta_{\pm}\sp{\mp n}\, , \, \, n\ge0\}\]
\end{rem}

\smallskip

In order to state the Main Theorem, we need the following.

\smallskip

\begin{defi}
Given $0<b<1$, we say that $X$ is {\bf $b$--asymptotically dissipative} (for a cocycle $A$) if there exists a positive constant $C>0$ such that for every $z\in X$, $|\det(A^n_z)|\leq Cb^n$ for every $n\geq0$.
\end{defi}

\smallskip

We recall that: given a cocycle $A=(f,A_*)$, we say that $f$ has no attractors, if all measure $f$--invariant is partially hyperbolic (See Definition \ref{def:partially_hyp_measure}). Our Main Theorem is the following.

\smallskip

\begin{teoa}\label{main_thm} Let $A=(f,A_*)$ be a linear cocycle over $X$ such that $f$ has no attractors. Assume that $X$ is $b$--asymptotically dissipative for the cocycle $A$. Then $A$ has dominated splitting if and only if for each $\beta\in\Delta$ which $\beta_+>b$, the set $\crit\beta$ is an empty set.
\end{teoa}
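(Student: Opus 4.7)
My strategy is to prove the two implications of the equivalence separately. The forward direction, dominated splitting $\Rightarrow$ $\crit{\beta}=\emptyset$ for every admissible $\beta$, will combine the characterization of dominated splitting in Theorem~\ref{pr:ds_vs_hyp} with the asymptotic dissipation and no-attractor hypotheses. The reverse direction, emptiness of $\crit{\beta}$ for all admissible $\beta$ $\Rightarrow$ dominated splitting, is the technical heart of the theorem, and I will establish it contrapositively, by manufacturing a critical point out of the failure of a uniform expansive direction, adapting the ideas of Crovisier~\cite{C} to the holomorphic setting.

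For the forward implication, assume $A$ has dominated splitting. Let $\sigma$ be the contractive section and $\tau$ the expansive section furnished by Theorem~\ref{pr:ds_vs_hyp}. Fix $x\in X$ and $\xi_x\in\Cb_x$. If $\xi_x\neq\sigma(x)$, then Lemma~\ref{uniq_directions} applied to the backward cocycle (combined with the cocycle identity $g(-n,\xi_x)=1/g(n,M^{-n}\xi_x)$ and the fact that $\sigma$ is the unique backward-expansive direction, with rate $\lambda^{-1}>1$ coming from the contractivity of $\sigma$) shows that the critical condition $g(-n,\xi_x)\ge\beta_-^{-n}$ fails for large $n$, so $\xi_x$ cannot be a critical direction. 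The only candidate is therefore $\sigma(x)$. If $\sigma(x)$ were $\beta$-critical, the forward bound $g(n,\sigma(x))\ge\beta_+^n$, together with $g(n,\sigma(x))=|\det A^n_x|/\|A^n_x v_\sigma\|^2$ and the dissipation estimate $|\det A^n_x|\le Cb^n$, would force $\|A^n_x|_F\|^2\le C(b/\beta_+)^n$, which decays exponentially because $\beta_+>b$. Since $F$ is a one-dimensional $A$-invariant subbundle, $\log\|A^n|_F\|$ is an additive Birkhoff sum, so any weak-$*$ accumulation $\mu$ of the empirical averages $n^{-1}\sum_{k=0}^{n-1}\delta_{f^k x}$ satisfies $\int\log\|A_z|_F\|\,d\mu(z)\le\tfrac{1}{2}\log(b/\beta_+)<0$. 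Ergodic decomposition then produces an $f$-invariant measure whose top Lyapunov exponent is strictly negative, contradicting the hypothesis that $f$ has no attractors and completing the forward direction.

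For the reverse implication, assume $A$ does not have dominated splitting. By the negation of Theorem~\ref{pr:ds_vs_hyp}(4), for every $C>0$ and $\lambda>1$ there exist $z\in X$ and $n>0$ such that every direction $\eta\in\Cb_z$ satisfies $g(n,\eta)<C\lambda^n$. Fix $\beta\in\Delta$ with $b<\beta_+\le\beta_-<1$. My plan is to produce a sequence $(z_k,\xi_k,n_k)$ with $n_k\to\infty$ realizing the finite-window critical bounds $g(m,\xi_k)\ge\beta_+^m$ and $g(-m,\xi_k)\ge\beta_-^{-m}$ for every $0\le m\le n_k$, and then extract a limit $(x,\xi_x)\in X\times\P(X)$ by compactness and the continuity of $g$ on each finite window, obtaining a genuine $\beta$-critical point. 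Three ingredients drive the construction of the sequences: Lemma~\ref{formula}, which converts failure of domination at one direction into controlled growth at a paired direction via $g(n,\xi_1)g(n,\xi_2)=(\sin\measuredangle(A^n u_1,A^n u_2)/\sin\measuredangle(u_1,u_2))^2$; asymptotic dissipation, which fixes the trade-off between forward and backward multipliers at the scale $b$ and makes the threshold $\beta_+>b$ meaningful; and the no-attractor hypothesis, which forbids $\xi_k$ from being absorbed into an Oseledets stable direction along its backward orbit, which would destroy the backward bound. The main obstacle is sustaining both multiplier bounds uniformly in $k$ through the diagonal process, and verifying that the limit direction $\xi_x$ does not trivially land in an Oseledets subbundle at $x$ (where one of the critical inequalities would degenerate). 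I expect that the flexibility offered by the critical-point definition at arbitrary times $(n_-,n_+)$, together with the monotonicity of $\crit{\cdot}$ in Remark~\ref{crit_inq}, will allow the initial critical point to be shifted to times $(0,0)$ after construction, at the price of a slight adjustment of $\beta$ compatible with $\beta_+>b$.
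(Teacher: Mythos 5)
Your forward implication (dominated splitting $\Rightarrow$ $\crit\beta=\emptyset$) is correct, and it takes a genuinely different route from the paper. The paper proves the contrapositive by showing that every critical point generates a \emph{critical pair} $(x,y)$ (Proposition~\ref{cr_pair}, which itself leans on Lemma~\ref{lem_tec} and the Criteria of Negative Exponent) and that a dominated splitting excludes critical pairs because the angle between $E$ and $F$ is bounded below (Proposition~\ref{ds_not_cp}). You instead pin down the only possible critical direction as $\sigma(x)$ via Lemma~\ref{uniq_directions} applied to backward expansion, and then turn the forward bound $g(n,\sigma(x))\ge\beta_+^n$ plus $|\det A^n_x|\le Cb^n$ into exponential contraction of $F$ along the forward orbit, hence an invariant measure with negative top exponent (you should say explicitly that domination forces $\|A^n|_E\|\le C\lambda^n\|A^n|_F\|$, so the \emph{top} exponent is indeed the one you estimated). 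This is shorter and avoids the critical-pair machinery entirely; both arguments consume the same hypotheses ($b$-dissipativity and absence of attractors).

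The reverse implication is where the proposal has a genuine gap: it is a statement of intent, not a proof. Two concrete problems. First, your starting point misquotes the negation of Theorem~\ref{pr:ds_vs_hyp}(\ref{basical_domination}): the correct negation reads ``for all $C,\lambda$ there exists $z$ such that \emph{for every} direction $\eta$ there exists $n$ (depending on $\eta$) with $g(n,\eta)<C\lambda^n$,'' whereas you assert a single $n$ working for all $\eta$ simultaneously; that stronger statement is exactly what the paper's Criteria for Domination is designed to extract, and its proof is nontrivial (it uses Pliss's Lemma and the Criteria of Negative Exponent, hence the no-attractor hypothesis). Second, and more seriously, you never explain how to obtain the \emph{backward} window $g(-m,\xi_k)\ge\beta_-^{-m}$, $0\le m\le n_k$, at the same points where forward domination fails. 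In the paper this is the content of the dichotomy around property $P(\beta_0)$: either $P(\beta_0)$ fails and then $\supp(X)$ has dominated splitting (Lemma~\ref{ifnotcd_Xstar_ds}), or one uses the nonempty blocks $H^-(\beta_0)$ (Theorem~\ref{thm_A}, built from Oseledets regular points of invariant measures plus Pliss) and transports their backward-expanded directions along $\alpha$-limit sets to the bad points (Proposition~\ref{Xnotds_thencd}); finally Pliss's Lemma upgrades $g(k,M^{m_k}\xi_k)\ge1$ to a forward window and Lemma~\ref{num_sub_lemma} aligns the two windows at a common time. Your appeal to ``the no-attractor hypothesis forbidding absorption into an Oseledets stable direction'' gestures at this but supplies no mechanism; without the $P(\beta_0)$ dichotomy (or an equivalent device) the diagonal extraction you describe cannot get started.
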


\smallskip

In the remainder of this section, we state several tools necessaries to prove the Main Theorem.

\smallskip

\subsection{Blocks of Domination}

\begin{defi}
Given $\alpha\in(0,1)$ and $\gamma>0$, we define the {\ the blocs of domination} as the sets
\[\gamma H^\pm(\alpha)=\bpl z\in X:\exists\, \, \xi\in\Cb_z\textit{\,
such that\, }g(\pm n,\xi)\geq \gamma\alpha^{-n},\, \forall\, n\ge
0\bpr,\]
and the sets
\[\gamma\Hc^\pm(\alpha)=\bpl z\in X:\exists\, \, \xi\in\Cb_z
\textit{\, such that\, }g(\pm n,\xi)>\gamma\alpha^{-n},\,
\forall\, n\ge 0\bpr.\]
\end{defi}

\smallskip

\begin{rem}
\begin{enumerate}
\item When $\gamma=1$, we denote the set $\gamma H\sp\pm(\alpha)$ (resp. $\gamma \Hc\sp\pm(\alpha)$) by $H\sp\pm(\alpha)$ (resp. $\Hc\sp\pm(\alpha))$.
\item It is easy to see that if $0<\alpha\le\alpha\sp\prime$ then $ \gamma H\sp\pm(\alpha)\subseteq \gamma H\sp\pm(\alpha')$ and
$\gamma\Hc\sp\pm(\alpha)\subset\gamma\Hc\sp\pm(\alpha')$.
\end{enumerate}
\end{rem}

\smallskip

The next Theorem establishes condition for the existence of blocks of domination. We prove them in subsection \ref{proofteoblock}.

\smallskip

\begin{teo}\label{thm_A} 
Let $A=(f,A_*)$ be a linear cocycle over $X$ such that $f$ has no attractors. Then for any $1>\beta>b$ and $1\le\gamma<b\sp{-1}\beta$ the blocks of domination $\gamma H^+(\beta)$ and $\gamma\sp{-1}H\sp-(\beta)$ are not empty
compact sets. Moreover, the sets
\[X_0^+=\cup_{n\in\Z}f^n(\gamma H^+(\beta))\, \, \, \textrm{and}\,
\, \, X_0^-=\cup_{n\in\Z}f^n(\gamma\sp{-1}H\sp-(\beta))\]
have total measure for any invariant measure $\nu$ with support in $X$.
\end{teo}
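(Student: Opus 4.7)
Plan. The strategy combines Oseledets's theorem with the two standing hypotheses to obtain a strict separation of Lyapunov exponents at every $f$--invariant measure, and then applies a Pliss--type argument to promote this asymptotic gap into the uniform--in--$n$ inequality that defines $\gamma H^+(\beta)$. I describe the argument for $\gamma H^+(\beta)$; the statement for $\gamma^{-1}H^-(\beta)$ follows by applying the same argument to the inverse cocycle.

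For any $f$--invariant probability $\mu$ supported in $X$, Oseledets's theorem yields, for $\mu$--a.e.\ $z$, the splitting $T_z = E_z \oplus F_z$ with exponents $\lambda^-(z) \leq \lambda^+(z)$. The no--attractors hypothesis forces $\lambda^-(z) < 0 \leq \lambda^+(z)$, while $b$--asymptotic dissipativity gives $\lambda^-(z) + \lambda^+(z) = \lim_n \tfrac{1}{n}\log|\det A^n_z| \leq \log b$. Combining,
\[
\lambda^+(z) - \lambda^-(z) \;\geq\; -\log b \;=\; \log(1/b) \;>\; \log(1/\beta),
\]
the last inequality being the hypothesis $\beta > b$. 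Let $\xi^-(z) := [E_z] \in \Cb_z$ be the stable Oseledets direction, an $M$--invariant measurable section, and set $\psi(z) := \log g(1, \xi^-(z))$. The multiplicativity $g(n+m, \xi) = g(n, M^n\xi)\cdot g(m, \xi)$ combined with $M$--invariance of $\xi^-$ gives $\log g(n, \xi^-(z)) = \sum_{k=0}^{n-1}\psi(f^k z)$; a direct computation from Definition~\ref{defi:g_for_cocyles} (or Lemma~\ref{formula} applied to $\xi^-$ and any transverse Oseledets direction) shows this sum divided by $n$ tends to $\lambda^+(z) - \lambda^-(z)$ on regular points. Birkhoff's theorem then delivers a $\mu$--a.e.\ time average $\psi^* \geq \log(1/b)$, and continuity of $A$ on the compact base yields an upper bound $\psi \leq H < \infty$ (using $\|A_z v\| \geq \|A_z^{-1}\|^{-1}$ for $v$ unit).

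The main step is a Pliss--type argument. Since $\psi^* \geq \log(1/b) > \log(1/\beta)$ and $\psi \leq H$, Pliss's lemma (in the form used by Ma\~n\'e and by Pujals--Sambarino) produces a constant $\theta > 0$ and, for $\mu$--a.e.\ $z$, an infinite set $\mathcal{H}(z) \subset \N$ of lower density $\geq \theta$ such that for each $n \in \mathcal{H}(z)$ and each $m \geq 1$,
\[
\sum_{k=n}^{n+m-1}\psi(f^k z) \;\geq\; m \log(1/\beta) + \log\gamma.
\]
The admissible range $\gamma \in [1, b^{-1}\beta)$ arises by tracking the dissipativity constant in $|\det A^m_z| \leq C b^m$ through the Pliss estimate against the gap $\log(\beta/b) = \log(1/b) - \log(1/\beta)$. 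Exponentiating recognizes this inequality as $g(m, \xi^-(f^n z)) \geq \gamma\beta^{-m}$ for all $m \geq 0$, i.e., $f^n(z) \in \gamma H^+(\beta)$.

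This closes the argument: for every $f$--invariant $\mu$ the orbit of $\mu$--a.e.\ $z$ meets $\gamma H^+(\beta)$ at forward times of positive density, so $\mu(X_0^+) = 1$; applying this to any invariant measure (one exists by Krylov--Bogolyubov) gives $\gamma H^+(\beta) \neq \emptyset$. Closedness of $\gamma H^+(\beta)$ is a routine limit argument: for $z_n \to z$ with witnessing directions $\xi_n \in \Cb_{z_n}$, compactness of $\P(X)$ yields a subsequence $\xi_n \to \xi \in \Cb_z$, and continuity of each $g(m, \cdot)$ preserves $g(m, \xi) \geq \gamma\beta^{-m}$. The symmetric conclusion for $\gamma^{-1}H^-(\beta)$ follows by replacing $A$ with $A^{-1}$ and the stable by the unstable Oseledets direction. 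The main obstacle is the Pliss step: producing the sharp range of $\gamma$ requires careful bookkeeping of the dissipativity constant against the Pliss output, which is precisely where the quantitative interplay between the two hypotheses is exploited.
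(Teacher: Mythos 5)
Your proposal is correct and follows essentially the same route as the paper: Oseledets plus the no--attractors and $b$--dissipativity hypotheses give $\lambda^+ - \lambda^- \ge \log(1/b) > \log(1/\beta)$ along the $\lambda^-$--direction, Pliss's lemma (applied with an intermediate rate $\alpha \in (b, \gamma^{-1}\beta)$, which is exactly your ``bookkeeping'' absorbing the factor $\gamma$ via $\alpha^{-m} > \gamma^m\beta^{-m} \ge \gamma\beta^{-m}$) yields a positive-density set of forward times landing in $\gamma H^+(\beta)$, and compactness/continuity of $g$ gives closedness; the $H^-$ case is the symmetric computation with $g(-n, F_x)$. Your formulation of the Pliss output as times valid for \emph{all} $m \ge 1$ is in fact slightly more careful than the paper's passage to accumulation points of finite-block Pliss times, but it is the same argument.
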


\smallskip

In \cite{P-RH}, the authors introduce the notion of critical point in terms of the block of domination. We decided introduce this notion independently of them. In subsection \ref{subs:block}, we explain in details this relations.

\subsection{Pliss's Lemma}

The following Lemma is a remarkable result known as Pliss lemma, which is frequently used in this paper.

\smallskip

\begin{lm}{\rm{\bf (Pliss's Lemma)}}
Given $0<\gamma_1<\gamma_0$ and $a>0$, there exist
$N_0=N_0(\gamma_0,\gamma_1,a)$ and
$\delta_0=\delta_0(\gamma_0,\gamma_1,a)>0$ such that for any sequences of numbers $(a_l)_{l=0}^{n-1}$ with $n>N_0$, $a^{-1}<a_l<a$ and $\prod^{n-1}_{l=0}a_l\geq\gamma_0^n$ we have that:

\noindent
if
\begin{equation}\label{pliss_set}
Ht=\left\{ 0\leq k<n\, : \,\forall\, k<s<n,\, \textrm{we have that}\,
\prod_{l=k+1}^{s}a_l\geq\gamma_1^{s-k} \right\},
\end{equation}
then $\#Ht\geq n\cdot\delta_0$.
\end{lm}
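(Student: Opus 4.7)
The plan is to turn the multiplicative statement into an additive one by taking logarithms. Set $u_l=\log(a_l/\gamma_1)$, $\eta=\log(\gamma_0/\gamma_1)>0$, and $U=\log a+|\log\gamma_1|$, so that $|u_l|\le U$ and $\sum_{l=0}^{n-1}u_l\ge n\eta$. With $V_j=\sum_{l=0}^{j-1}u_l$ (so $V_0=0$, $V_n\ge n\eta$), the condition $\prod_{l=k+1}^s a_l\ge\gamma_1^{s-k}$ becomes $V_{s+1}\ge V_{k+1}$. Setting $i=k+1$, the set $Ht$ is in bijection with
\[
G=\{i\in\{1,\dots,n\}\,:\,V_i\le V_j\text{ for all }j\in[i,n]\},
\]
i.e.\ the right-minima of the walk $V$. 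Trivially $n\in G$, so we can write $G=\{i_1<i_2<\dots<i_p=n\}$ and our goal is $p\ge n\delta_0$.

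The crux is a bound on consecutive jumps: for $k\ge 2$, $V_{i_k}-V_{i_{k-1}}\le U$. I would argue as follows. If $j\in(i_{k-1},i_k)$ then $j\notin G$, and a short verification (using that the minimum of $V$ on $[j,n]$ is always attained at a good index, hence at $i_k$) shows $V_j>V_{i_k}$. Applying this to $j=i_{k-1}+1$ (the case $i_{k-1}+1=i_k$ being trivial) gives $V_{i_k}\le V_{i_{k-1}+1}=V_{i_{k-1}}+u_{i_{k-1}}\le V_{i_{k-1}}+U$. The same device handles the initial segment: $V_{i_1}$ is the minimum of $V$ on $[1,n]$, so $V_{i_1}\le V_1\le U$.

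Summing telescopically,
\[
n\eta\le V_n=V_{i_p}\le V_{i_1}+\sum_{k=2}^{p}(V_{i_k}-V_{i_{k-1}})\le pU,
\]
so $p\ge n\eta/U$. Thus one can take $\delta_0=\eta/U$, which depends only on $\gamma_0,\gamma_1,a$, and $N_0$ may be taken as any integer making the conclusion non-vacuous (e.g.\ $N_0=\lceil U/\eta\rceil$ so that $n\delta_0\ge 1$).

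The only subtle point—and the one I would flag as the main obstacle—is the jump bound $V_{i_k}-V_{i_{k-1}}\le U$. One is tempted to estimate this difference using the length $i_k-i_{k-1}$ of the gap, which yields only the useless bound $(i_k-i_{k-1})U$; the clean argument instead looks one step past the previous minimum and exploits the fact that the very next value of the walk already caps all later minima. Once this observation is in place, the remainder is a one-line telescoping.
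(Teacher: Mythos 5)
The paper states Pliss's Lemma as a classical result and gives no proof of it, so there is no internal argument to compare against. Your proof is correct and is essentially the standard one: passing to logarithms, the hyperbolic times become the right-minima of the walk $V_j$, the key jump bound $V_{i_k}-V_{i_{k-1}}\le U$ follows by looking one step past the previous minimum, and telescoping yields $p\ge n\eta/U$. All steps check out (the ``short verification'' that the minimum of $V$ over $[j,n]$ is attained at a point of $G$, hence equals $V_{i_k}$, is indeed immediate), and the constants $\delta_0=\eta/U$, $N_0=\lceil U/\eta\rceil$ depend only on $\gamma_0,\gamma_1,a$ as required.
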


\smallskip

\begin{rem}
When $k\in Ht$, $k$ is called a {\bf hyperbolic time}.
\end{rem}

\smallskip

As a corollary, we have the following result.

\smallskip

\begin{cor}\label{ap_p_lemma}
Given $0<\gamma_1<\gamma_0$ there exist $N_0$
and $\delta_0$ positive constants such that:
If for $z\in X$ there exists $\xi\in\Cb_z$ satisfying
$g(n,\xi)\geq\gamma_0^n$ (resp. $g(-n,\xi)\geq\gamma_0^n$) for some
$n\geq N_0$, then there exists $0\leq j<n$ such that
$n-j>n\delta_0-1$ and
\[g(i,M^j(\xi))\geq\gamma_1^i\, \, \, \, \textrm{for every}\, \, \,
\, 0<i\leq n-j,\]
(resp. $g(-i,M^{-j}(\xi))\geq\gamma_1^i$\, \,  for every\, \,
$0<i\leq n-j$).
\end{cor}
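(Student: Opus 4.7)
The plan is to apply Pliss's Lemma directly to the sequence of one-step multipliers along the forward $M$-orbit of $\xi$. Concretely, I would set $a_l := g(1, M^l \xi)$ for $l = 0, 1, \ldots, n-1$. By the composition rule $g(n+m,\eta) = g(n, M^m \eta)\cdot g(m,\eta)$ noted right after Definition \ref{defi:g_for_cocyles}, an immediate induction gives
\[
\prod_{l=0}^{n-1} a_l \;=\; g(n, \xi) \;\geq\; \gamma_0^n,
\]
so the multiplicative hypothesis of Pliss's Lemma is satisfied.

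Next I would verify the uniform two-sided bound $a^{-1} < a_l < a$ required by the lemma. The function $(z, \xi) \mapsto g(1, \xi) = |\det A_z|/\|A_z v_\xi\|^2$ is continuous and strictly positive on the compact projective bundle $\P(X)$ (by continuity of $A$, compactness of $X$, and non-degeneracy of $A_z$), hence attains a positive minimum and a finite maximum. This supplies a constant $a > 0$ independent of $z$, $\xi$, and $n$. Pliss's Lemma then furnishes $N_0$ and $\delta_0 > 0$ depending only on $\gamma_0, \gamma_1, a$.

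For any $n \geq N_0$ satisfying the hypothesis, the hyperbolic-time set $Ht \subseteq \{0, 1, \ldots, n-1\}$ has $\#Ht \geq n\delta_0$, so its minimum $k := \min Ht$ satisfies $k \leq n - n\delta_0$: otherwise $Ht$ would be contained in $\{k, \ldots, n-1\}$, a set of size $n - k < n\delta_0$. Put $j := k+1$. For any $0 < i \leq n - j$, set $s := k + i$, so $k < s \leq n-1 < n$, and the hyperbolic-time inequality reads
\[
\gamma_1^i \;=\; \gamma_1^{s-k} \;\leq\; \prod_{l=k+1}^{s} a_l \;=\; \prod_{l=k+1}^{s} g(1, M^l \xi) \;=\; g(s-k, M^{k+1}\xi) \;=\; g(i, M^j \xi),
\]
where the second-to-last equality is the composition rule telescoped. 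The bound $n - j \geq n\delta_0 - 1$ follows from $k \leq n - n\delta_0$, and the strict inequality in the statement is absorbed by replacing $\delta_0$ by any slightly smaller positive constant.

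The ``resp.'' backward case is handled by the symmetric argument applied to the inverse cocycle: one takes $a_l := g(-1, M^{-l}\xi)$, uses $\prod_{l=0}^{n-1} a_l = g(-n,\xi)$, and proceeds verbatim. No step is genuinely difficult; the only non-routine input is the uniform positivity of $g(1, \cdot)$ coming from the compactness of $\P(X)$ together with the fact that $A$ is a genuine isomorphism on each fibre, and this is the mild obstacle that forces the hypothesis that $X$ be compact.
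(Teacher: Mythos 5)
Your proof is correct and follows essentially the same route as the paper: apply Pliss's Lemma to $a_l=g(1,M^l\xi)$, take the lowest hyperbolic time $k$, and set $j=k+1$. The only difference is that you explicitly verify the uniform bound $a^{-1}<a_l<a$ via compactness of $\P(X)$ and address the strict inequality $n-j>n\delta_0-1$, two details the paper's proof passes over silently.
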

\begin{proof}
From Pliss's Lemma, taking $a_l=g(1,M^l(\xi))$ for $l=0,\ldots,n-1$ then
$g(n,\xi)=\prod^{n-1}_{l=0}g(1,M^l(\xi))\geq\gamma_0^n$. Let $k_0$ be the lowest hyperbolic time. We have that $n-k_0\geq n\delta_0$, and for every $k_0<s<n$
\[\gamma_1^{s-k_0}\leq\prod^{s}_{l=k_0+1}g(1,M^l(\xi))=g(s-k_0,M^{k_0+1}(\xi)).\]
Hence it is enough to take $j=k_0+1$, and we have the corollary.
\end{proof}

\subsection{Criteria for Domination}

Now we present a criteria for the existence of dominated splitting that is essential in the proof of our Main Theorem.

\smallskip

\begin{teo}{\rm{\bf (Criteria for Domination)}} Let $X$ be a $b$--asymptotically dissipative for a cocycle $A=(f,A_*)$ such that $f$ has no attractor, and let \linebreak$1>\beta>b$.

\smallskip

If there exist $k_0,m_0>0$ such that for all $z\in X$, there exists one direction $\xi_z\in\Cb_z$ satisfying that
\begin{equation}\label{eq_cd_I}
g(k_0,M^m(\xi_z))\leq\beta\sp{-k_0},\, \, \, \, \textrm{for every}\, \, \, \,m>m_0;
\end{equation}
then $X$ has dominated splitting.
\end{teo}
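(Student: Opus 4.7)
My plan is to construct a globally defined direction at each $z\in X$ satisfying the expansivity condition (\ref{basical_domination}) of Theorem~\ref{pr:ds_vs_hyp}, after which dominated splitting follows at once.

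\emph{First, I would upgrade the hypothesis via telescoping.} Since $g(n+m,\xi)=g(n,M^m\xi)\cdot g(m,\xi)$ and $X$ is compact, the $k_0$-step bound $g(k_0,M^m\xi_z)\le\beta^{-k_0}$ for $m>m_0$ gives a uniform $n$-step estimate: setting $\eta_z:=M^{m_0+1}(\xi_{f^{-m_0-1}(z)})$, one obtains $g(n,\eta_z)\le C_0\beta^{-n}$ for every $n\ge 0$ and every $z\in X$, with $C_0$ a uniform constant absorbing the residues modulo $k_0$. This is the same device used in Proposition~\ref{contractive_section} to pass from a single-step bound to an asymptotic estimate.

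\emph{Second, I would invoke the blocks of domination.} Since $b<\beta<1$ and $f$ has no attractors, Theorem~\ref{thm_A} gives $\gamma\ge 1$ for which $\gamma H^+(\beta)$ is non-empty and compact and $\bigcup_n f^n(\gamma H^+(\beta))$ has total measure. At each $z\in\gamma H^+(\beta)$ there is (by definition of the block) a direction $\tau_z\in\Cb_z$ with $g(n,\tau_z)\ge\gamma\beta^{-n}$ for every $n\ge 0$; by Lemma~\ref{uniq_directions} this $\tau_z$ is unique and $M$-invariant along the orbit. The product identity of Lemma~\ref{formula} then yields $g(n,\tau_z)\cdot g(n,\eta_z)\le\sin^{-2}(\angle(u_{\tau_z},u_{\eta_z}))$ whenever $\tau_z\ne\eta_z$, setting up the following dichotomy.

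\emph{Third, a dichotomy.} Either (A) there is $z_0\in\gamma H^+(\beta)$ with $\tau_{z_0}\ne\eta_{z_0}$, in which case the product estimate combined with the lower bound on $g(n,\tau_{z_0})$ forces $g(n,\eta_{z_0})\le K\beta^n$, so $\eta$ is genuinely $M$-contractive along the orbit of $z_0$, and (by Proposition~\ref{contractive_section} together with compactness of $X$) this extracts a contractive section; or (B) $\eta_z=\tau_z$ throughout $\gamma H^+(\beta)$, in which case the two-sided bound $\gamma\beta^{-n}\le g(n,\eta_z)\le C_0\beta^{-n}$ on the block directly verifies item~(\ref{basical_domination}) of Theorem~\ref{pr:ds_vs_hyp} there. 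In branch~(B) I would extend the lower bound to $\bigcup_n f^n(\gamma H^+(\beta))$ by $M$-invariance (Lemma~\ref{uniq_directions}), and then by continuity of $g$ together with the no-attractor hypothesis (to rule out an $f$-invariant subset of $X$ on which the bound could fail) to the whole of $X$. In either branch, Theorem~\ref{pr:ds_vs_hyp} yields dominated splitting.

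The main obstacle is precisely this last extension step: the section $\eta$ from the hypothesis is not a priori continuous, and the block $\gamma H^+(\beta)$ need not exhaust $X$. Overcoming this requires the combined use of dissipativity (through $\beta>b$), the no-attractor assumption, the uniqueness statement of Lemma~\ref{uniq_directions}, and the asymptotic structure of $\bigcup_n f^n(\gamma H^+(\beta))$ from Theorem~\ref{thm_A} to force the lower bound $g(n,\eta_z)\ge c\beta^{-n}$ to hold at every point of $X$.
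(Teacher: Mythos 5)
There is a genuine gap, and it sits exactly where you locate it in your closing paragraph: the passage from information on the block $\gamma H^+(\beta)$ (or on a single orbit) to an estimate valid at \emph{every} point of $X$. Neither branch of your dichotomy closes this. In branch (A) you obtain $g(n,\eta_{z_0})\le K\beta^n$ along the forward orbit of one point $z_0$; Proposition \ref{contractive_section} cannot be invoked here because it presupposes a globally defined $M$--invariant section, and item \ref{basical_domination} of Theorem \ref{pr:ds_vs_hyp} requires the estimate at every $z\in X$, so ``compactness of $X$'' does not extract a section from one contracted orbit. Branch (B) is worse: the two--sided bound $\gamma\beta^{-n}\le g(n,\eta_z)\le C_0\beta^{-n}$, together with the forward bounds $g(n,M^m\eta_z)\le C_0\beta^{-n}\le\beta_1^{-n}$ (for $b<\beta_1<\beta$ and $n$ large) that your telescoping already gives, places $(z,\eta_z)$ squarely in the hypotheses of the Criteria of Negative Exponent (Lemma \ref{cnexp}); since $\beta>b$, $\omega(z)$ then supports a measure that is not partially hyperbolic, contradicting the no--attractors assumption. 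So branch (B) is vacuous rather than the branch that produces domination, and the ``extension step'' you defer is not a technicality to be supplied later --- it is the entire content of the theorem, and your proposal contains no mechanism for it.

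The missing mechanism is the one the paper actually uses. Note first that the hypothesis $g(k_0,M^m\xi_z)\le\beta^{-k_0}$ only forbids growth faster than $\beta^{-n}>1$; the theorem's content is to upgrade this to a genuine uniform contraction $g(k_1,M^m\xi_z)<\gamma<1$ at every point, after which Proposition \ref{crit_dom_2} telescopes it into item \ref{basical_domination} of Theorem \ref{pr:ds_vs_hyp}. The paper performs this upgrade by contradiction: if it fails, then for every $k$ there are $z_k$, $m_k>k$ with $g(k,M^{m_k}\xi_k)\ge\beta_0^k$; Pliss's Lemma (Corollary \ref{ap_p_lemma}) extracts hyperbolic times $r_k$ with $k-r_k\to\infty$ and $g(s,M^{r_k}\xi_k)\ge\beta_2^s$, and an accumulation point $(z,\varpi)$ then satisfies both $g(n,\varpi)\ge\beta_2^n$ and $g(n,M^m\varpi)\le\beta_1^{-n}$, so Lemma \ref{cnexp} produces a non--partially--hyperbolic measure and the contradiction. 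Your proposal assembles several of the right ingredients (the telescoping, Theorem \ref{thm_A}, Lemmas \ref{formula} and \ref{uniq_directions}) but never uses Pliss's Lemma and uses the Criteria of Negative Exponent only implicitly and in the wrong direction, so the central step is absent.
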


\smallskip

To prove the Criteria for Domination we need of the following.

\smallskip

\begin{pr}\label{crit_dom_2}
Suppose that there exist $k_1,m_1>0$ and $\gamma<1$ such that for any $z\in X$, there exists one direction $\xi_z\in\Cb_z$ such that
\begin{equation}\label{eq:04}
g({k_1},M^m(\xi_z))<\gamma,\, \, \, \, \textrm{for every}\, \, \, \,m>m_1;
\end{equation}
then $X$ has dominated splitting.
\end{pr}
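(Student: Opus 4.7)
The strategy is to produce an $M$-invariant contractive section from the hypothesis and then invoke the equivalences of Theorem \ref{pr:ds_vs_hyp}. The condition $g(k_1, M^m(\xi_z))<\gamma$ for all $m>m_1$ says that after a transient of length $m_1$, the forward orbit of $\xi_z$ contracts uniformly in $k_1$-blocks, so the natural move is to shift by $m_1+1$ iterates to absorb the transient.

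Concretely, for each $w\in X$ set $z:=f^{-(m_1+1)}(w)$, pick $\xi_z$ as in the hypothesis, and define $\sigma(w):=M^{m_1+1}(\xi_z)\in\Cb_w$. Since $M^{k_1 j}(\sigma(w))=M^{m_1+1+k_1 j}(\xi_z)$ and $m_1+1+k_1 j>m_1$ for every $j\ge 0$, the hypothesis gives $g(k_1,M^{k_1 j}(\sigma(w)))<\gamma$, and the cocycle identity yields
\[g(k_1 n,\sigma(w))=\prod_{j=0}^{n-1}g(k_1,M^{k_1 j}(\sigma(w)))<\gamma^n.\]
Together with the uniform upper bound $\sup\{g(r,\eta):\eta\in\P(X),\,0\le r<k_1\}<\infty$ (which holds by compactness of $X$ and continuity of the cocycle), an interpolation produces a uniform exponential contraction $g(m,\sigma(w))\le C\tilde\gamma^m$ for all $w\in X$ and $m\ge 0$, where $\tilde\gamma=\gamma^{1/k_1}<1$. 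In particular $g(k_1,\sigma(w))<\gamma$ uniformly on $X$.

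The candidate $\sigma$ is not yet manifestly $M$-invariant, because the directions $\xi_z$ are chosen independently at each base point. However, the previous estimate shows that $\sigma(w)$ is an asymptotically contracted direction at $w$, and Lemma \ref{uniq_directions} applied to $M^{-1}$ guarantees uniqueness of such a direction in each fiber. Since $g(m, M(\sigma(w)))=g(m+1,\sigma(w))/g(1,\sigma(w))$ is also exponentially small (using that $g(1,\cdot)$ is bounded away from zero by compactness), uniqueness forces $M(\sigma(w))=\sigma(f(w))$, so $\sigma$ is $M$-invariant. Proposition \ref{contractive_section} then upgrades $\sigma$ to a contractive section, and Theorem \ref{pr:ds_vs_hyp} delivers dominated splitting.

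The main obstacle I foresee is precisely the $M$-invariance step: the hypothesis is purely pointwise and provides no coherence under $f$, so the candidate section is not invariant by construction. The uniqueness of the contracted direction, coming from the area formula in Lemma \ref{formula}, is the decisive ingredient that rescues invariance (and, by the same argument as in Claim 4 of the proof of Theorem \ref{pr:ds_vs_hyp}, yields continuity of $\sigma$ as well).
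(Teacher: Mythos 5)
Your first half --- shifting by $m_1+1$, multiplying the hypothesis over $k_1$-blocks, and interpolating with the constants $C_j=\sup g(j,\cdot)$ to get $g(m,\sigma(w))\le C\tilde\gamma^{\,m}$ --- is exactly the computation in the paper's proof and is correct. The gap is the invariance step, and it is a genuine one. Lemma \ref{uniq_directions} gives uniqueness of \emph{expanded} directions: for the future ($g(n,\xi)\ge C\lambda^n$ with $\lambda>1$), or, ``applied to $M^{-1}$'', for the past ($g(-n,\xi)\ge C\lambda^n$). What you have produced is a \emph{forward-contracted} direction, $g(n,\sigma(w))\le C\tilde\gamma^{\,n}$, and for a single non-invariant direction this is \emph{not} the same as backward expansion, because the correct identity is $g(-n,\xi)=1/g\bigl(n,M^{-n}\xi\bigr)$, not $1/g(n,\xi)$. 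Forward-contracted directions are badly non-unique: for the constant cocycle $A=\mathrm{diag}(2,1)$ over a fixed point one computes $g(n,[a:b])=2^n/(4^n|a|^2+|b|^2)\le |a|^{-2}2^{-n}$, so \emph{every} direction with $a\ne 0$ is uniformly forward-contracted, while $M([1:1])=[2:1]\ne[1:1]$. The mechanism of Lemma \ref{formula} only produces a contradiction when the product $g(n,\xi_1)g(n,\xi_2)$ blows up against the bound $\sin^{-2}(\measuredangle(u_1,u_2))$; when both factors are exponentially small the product merely tends to $0$, which just says the image directions become parallel and is no contradiction at all. Hence ``uniqueness forces $M(\sigma(w))=\sigma(f(w))$'' fails, and with it the continuity claim, which leans on the same uniqueness.

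This is precisely why the paper does not try to build an invariant contractive section: it aims instead at item 4 of Theorem \ref{pr:ds_vs_hyp}, which is formulated so that a merely \emph{pointwise} family of (backward-)\emph{expanded} directions suffices --- for those, uniqueness is genuine, and invariance and continuity then come for free (Claim 4 of that proof). The real content of the proposition, missing from your argument, is the conversion of ``a forward-contracted direction at every point'' into ``a backward-expanded direction at every point'', i.e.\ a single $\tau_z$ with $g(-n,\tau_z)\ge C'\lambda^n$ for all $n$. This requires an actual argument --- for instance a nested-disc/modulus argument in the spirit of Proposition \ref{contractive_section} and Claim 2, or a Pliss-type selection along backward orbits --- because the candidate directions $M^n(\sigma(f^{-n}(z)))$ depend on $n$ and need not converge for trivial reasons. (To be fair, the paper's own last line, which passes from $g(n,\sigma_z)\le C\lambda_0^n$ to $g(-n,\sigma_z)\ge C\lambda_0^{-n}$ in one step, elides the same conversion; but your proposed repair via uniqueness of contracted directions does not work.)
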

\begin{proof} Fix $z_0\in X$ and denote by $\xi_0=M^{m_1}(\xi_{z_0})$
and $\xi_t=M^t(\xi_0)$, then we have that $g(k_1,\xi_t)<\gamma$ for every $t\geq0$. Let us take, for $j=0,\ldots,k_1-1$\[C_j=\sup\bpl
g(j,w)\, :\, w\in\Cb_z\,, \, \, z\in X\bpr,\]
it follows that
\[g(nk_1+j,\xi_0)=g(j,\xi_0)g(nk_1,\xi_j)\leq C_j\gamma^n\leq
C\lambda_0^{nk_1+j},\]
where $\lambda_0=\gamma^{1/k_1}<1$ and $C_0=\sup\bpl C_j\gamma^{-j/k_1}\,
: \, j=1,\ldots,k_1-1\bpr$.

\smallskip

To end, for every $z\in X$ let us take $z_0=f^{-m_1}(z)$ and $\sigma_z=M^{m_1}(\xi_{z_0})$, it follows that $g({-n},\sigma_z)\geq
C\lambda^n$, where $C=C_0^{-1}$ and $\lambda=\lambda_0\sp{-1}$. From Theorem \ref{pr:ds_vs_hyp} item \ref{basical_domination}, we conclude the domination.
\end{proof}

\smallskip

A fundamental tool to prove the Criteria for Domination, is the following lemma. This establish that if there exists one direction that is neither contracted nor expanded for the future, then the largest Lyapunov exponent in the omega limit of this point is negative. Our version is a stronger version of Main Lemma in \cite{P-RH}, but enough to
conclude what we want. 
\smallskip

\begin{lm}\label{cnexp}{\rm{\bf (Criteria of Negative Exponent)}} Let $1>\beta_2,\beta_1>b$, let $x\in X$ and let $\xi_x$ be a direction in $\Cb_x$. Suppose that there exist constants $n_0, m_0\in\N$ such that
\begin{itemize}
\item[{\it i)}] $\omega(x)$ is $b$--asymptotically dissipative,
\item[{\it ii)}] $\beta_2^n\leq g(n,\xi_x)$ for every $n\geq n_0$.
\item[{\it iii)}] $g(n,M^{m}\xi_x)\leq\beta_1^{-n}$ for every $m>m_0$
and $n\geq n_0$.
\end{itemize}
Then $\omega(x)$ supports a measure $\mu$ which large exponent is negative, and so, $\nu$ is not a partially hyperbolic measure.
\end{lm}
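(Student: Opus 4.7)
The plan is to manufacture the desired measure on $\omega(x)$ by lifting the orbit of $\xi_x$ to the projective bundle $\P(X)$, extracting a weak-$*$ accumulation point of its Birkhoff averages, and then applying Birkhoff's and Oseledets' theorems to each ergodic component. Hypotheses (ii) and (iii) will be used in complementary ways: (iii) yields a pointwise upper bound on the lifted $\omega$-limit that survives in every ergodic component, while (ii) provides only an integral lower bound, which is used to single out a ``good'' component.

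Concretely, I set $\mu_n^{*}=\tfrac{1}{n}\sum_{i=0}^{n-1}\delta_{(f^i(x),\,M^i\xi_x)}$ and let $\mu^{*}$ be any weak-$*$ accumulation point on the compact space $\P(X)$; then $\mu^{*}$ is $M$-invariant and $\mu=\pi_{*}\mu^{*}$ is $f$-invariant with $\supp\mu\subseteq\omega(x)$. Since $\log g(1,\cdot)$ is continuous on $\P(X)$, the telescoping identity $\log g(n,\xi_x)=\sum_{i=0}^{n-1}\log g(1,M^i\xi_x)$ combined with (ii) gives $\int\log g(1,\cdot)\,d\mu^{*}\ge\log\beta_2$. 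On the other hand, writing $\Omega^{*}\subseteq\P(X)$ for the $\omega$-limit of $(x,\xi_x)$ under $M$, the continuity of $A^n$ in both base point and direction, combined with (iii), forces $g(n,\eta)\le\beta_1^{-n}$ at every $(y,\eta)\in\Omega^{*}$ for all $n\ge n_0$; in particular this bound holds $\mu^{*}$-a.e.\ since $\supp\mu^{*}\subseteq\Omega^{*}$.

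I then ergodic-decompose $\mu^{*}=\int\mu_\alpha^{*}\,dP(\alpha)$ into $M$-ergodic components, each of whose base projection $\mu_\alpha=\pi_{*}\mu_\alpha^{*}$ is $f$-ergodic. Birkhoff applied to $\log g(1,\cdot)$ together with the pointwise upper bound yields $c_\alpha:=\int\log g(1,\cdot)\,d\mu_\alpha^{*}\le-\log\beta_1$. Using the identity $\|A^n v_\eta\|^{2}=|\det A^n_y|/g(n,\eta)$ together with Oseledets for the $f$-ergodic measure $\mu_\alpha$, one identifies $c_\alpha\in\{-s_\alpha,\,s_\alpha\}$ where $s_\alpha:=\lambda^{+}(\mu_\alpha)-\lambda^{-}(\mu_\alpha)\ge 0$; the sign records whether $\mu_\alpha^{*}$-almost every direction is the Oseledets ``contracting'' one or not, with both signs collapsing to $0$ when $\lambda^{+}=\lambda^{-}$. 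Now I pick $\alpha$ with $c_\alpha\ge\log\beta_2$, which is possible because the $P$-average of $c_\alpha$ equals $\int\log g(1,\cdot)\,d\mu^{*}\ge\log\beta_2$. For this $\alpha$, $s_\alpha=|c_\alpha|\le\max\{-\log\beta_1,\,-\log\beta_2\}$, and combining with the asymptotic-dissipation estimate $\lambda^{+}(\mu_\alpha)+\lambda^{-}(\mu_\alpha)\le\log b$ (valid on $\omega(x)$) gives
\[
2\lambda^{+}(\mu_\alpha)=s_\alpha+\bigl(\lambda^{+}(\mu_\alpha)+\lambda^{-}(\mu_\alpha)\bigr)\le\log b-\min\{\log\beta_1,\log\beta_2\}<0,
\]
since $\beta_1,\beta_2>b$. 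Hence $\mu_\alpha$ has negative top Lyapunov exponent and is the desired non--partially-hyperbolic measure on $\omega(x)$.

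The main obstacle I anticipate is the uniform propagation of (iii) to every ergodic component: it is essential that the bound hold \emph{pointwise} on $\Omega^{*}$, via continuity of $A^n$, rather than only as an integral against $\mu^{*}$, since only then does it descend via ergodic decomposition. The symmetric lower bound from (ii) is, by contrast, only available as a global integral, and for this reason it is used solely to guarantee the existence of a single component $\alpha$ with $c_\alpha\ge\log\beta_2$ rather than to constrain every component. A secondary technical point is the identification $c_\alpha\in\{\pm s_\alpha\}$, which relies on the fact that the base projection of an $M$-ergodic measure on $\P(X)$ is automatically $f$-ergodic, together with the standard two-valued behaviour of $\tfrac{1}{n}\log\|A^n v_\eta\|$ coming from Oseledets.
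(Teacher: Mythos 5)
Your proof is correct, and it runs on the same engine as the paper's: the empirical measures of the lifted orbit $(f^i(x),M^i\xi_x)$ in $\P(X)$, the integral lower bound $\int\log g(1,\cdot)\,d\mu^{*}\ge\log\beta_2$ coming from (ii), the pointwise upper bound on the lifted $\omega$--limit coming from (iii), the ergodic decomposition, and the Oseledets two--valued identity $\lim\frac1n\log g(n,\eta)=\lambda^{+}+\lambda^{-}-2\lambda(\eta)\in\{\pm(\lambda^{+}-\lambda^{-})\}$. Where you genuinely depart from the paper is in the endgame. The paper argues by contradiction: it assumes every measure on $\omega(x)$ is partially hyperbolic, uses $\lambda^{+}\ge0$ together with dissipativity to show the limit must equal $\lambda^{-}-\lambda^{+}\le\log b$ almost everywhere, and then contradicts the integral bound $\ge\log\beta_2>\log b$. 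You instead argue directly: you use (iii) to get $c_\alpha\le-\log\beta_1$ for \emph{every} ergodic component, use (ii) only to select one component with $c_\alpha\ge\log\beta_2$, deduce $|\lambda^{+}-\lambda^{-}|=|c_\alpha|\le-\min\{\log\beta_1,\log\beta_2\}$ from the two--sided bound, and combine with $\lambda^{+}+\lambda^{-}\le\log b$ to conclude $2\lambda^{+}\le\log b-\min\{\log\beta_1,\log\beta_2\}<0$. This buys two things: you never need to invoke the standing ``no attractors'' hypothesis inside the proof (so your version is marginally more general in its logical dependencies), and you exhibit an explicit ergodic component carrying the negative top exponent rather than merely deriving a contradiction. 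Your remark that (iii) must be propagated pointwise to $\Omega^{*}$ via continuity, while (ii) is only available in integrated form, is exactly the asymmetry the paper's proof also exploits, and your handling of the measurability/regularity issues (ergodicity of the base projection, full measure of Oseledets--regular points) matches the role of the paper's Claim inside its proof.
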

\begin{proof}
We may assume that $\omega(x)$ only support partially hyperbolic measures, and then the large exponent is not negative. We take $n_k\rightarrow\infty$ such that the limit
\[\mu=\lim_{k\rightarrow\infty}\frac{1}{n_k}\sum_{i=0}^{n_k-1}\delta_{
M^i(\xi_x)}\]
there exists. We denote the support of the measure $\mu$ by $\widehat{K}$. Then $\widehat{K}$ is a compact set of $TX$ and his projection $K=pr(\widehat{K})\subset\omega(x)$ is the support (in $X$) of the measure
\[\mu'=\lim_{k\rightarrow\infty}\frac{1}{n_k}\sum_{i=0}^{n_k-1}\delta_
{f^i(x)}\]
that is the projection of $\mu$ in the first coordinate.

\smallskip

Since $\mu'$ is a $f$--invariant measure, we have that for any $z_0\in\mathcal{R}(A,\mu')$ and $w\in\Cb_{z_0}$, the limit
\begin{equation}\label{eq:02}
\begin{aligned}
\lim_{k\rightarrow\infty}\frac{1}{n_k}\log
\left(\phantom{\overline{A}}\hspace{-8pt}g({n_k},w)\right)&=\lim_{
k\rightarrow\infty}\frac{1}{n_k}\left(\log(|\det
A^{n_k}_{z_0}|)-2\log(|A^{n_k}_{z_0}
w|)\right)\\
&=\lambda^+(z_0)+\lambda^-(z_0)-2\lambda(w)
\end{aligned}
\end{equation}
there exists, where $\lambda(w)$ is the Lyapunov exponent associated with the direction $w$. We denote the limit given in the equation (\ref{eq:02}) by $I(z_0,w)$. Since $\lambda(w)=\lambda\sp\pm(z_0)$, we conclude that $I(z_0,w)$ take only the values $\lambda^+(z_0)-\lambda^-(z_0)$ or $\lambda^-(z_0)-\lambda^+(z_0)$.

\smallskip

Since that $z_0\in\omega(x)$, there exists a sequences $(m_k)_k$ such that $f^{m_k}(x)\rightarrow z_0$. By passing to subsequence if necessary, there exists $w_0\in\Cb_z$ such that $M^{m_k}(\xi_x)\rightarrow w_0$. We conclude that $(z_0,w_0)$ is a point of $\widehat{K}$.

\smallskip

By item $(iii)$ of this Lemma, we have that
\begin{equation}\label{eq:03}
I(z_0,w_0)\leq-\log(\beta_1).
\end{equation}
Moreover, this inequality is true for every $(z,w)\in\widehat{K}$, with $z\in\mathcal{R}(A,\mu')$.

\smallskip

On the other hand, we remark that $\lambda^-(z_0)\leq\log
b<0\leq\lambda^+(z_0)$ and $\lambda^-(z_0)-\lambda^+(z_0)\leq
\lambda^-(z_0)+\lambda^+(z_0)\le\log b$. Hence either
$I(z_0,w_0)\leq\log b$ or $I(z_0,w_0)\ge-\log b$. If we suppose that the second inequality holds, then $I(z_0,w_0)\ge-\log b>-\log\beta_1$ that contradict the equation (\ref{eq:03}). We conclude that for every
$(z,w)\in\widehat{K}$ with $z$ a regular point in the Oseledets sense, the limit $I(z,w)$ is equal to
\[\lim_{k\rightarrow\infty}\frac{1}{n_k}\log(g({n_k},w))=
\lambda^-(z)-\lambda^+(z)\leq\log(b).\]
\noindent
{\bf Claim.} $\mu(\widehat{K}\cap pr^{-1}(\mathcal{R}(A,\mu'))=1$.
\newline
{\bf Proof of the Claim.} The Ergodic Decomposition Theorem assert that: There exists a set $\Sigma$ of full probability in $\P(X)$ such that for all $(z,w)\in \Sigma$ the limit
\[\lim_{n\rightarrow\infty}\frac{1}{n}\sum_{j=0}^{n-1}\delta_{
M^j(z,w)}=\mu_{(z,w)}\]
is an ergodic measure, and that for all $h\in\mathcal{L}^1(\P(X),\mu)$ we have
\[\int \left( \int hd\mu_{(z,w)} \right)d\mu=\int hd\mu.\]
In particular, the projection in the first coordinate
\[\mu'_z=pr\circ\mu_{(z,w)}=\lim_{n\rightarrow\infty}\frac{1}{n}\sum_{
j=0}^{n-1}\delta_{f^j(z)}\]
is ergodic and
\[\int \left( \int \widetilde{h}d\mu'_{z} \right)d\mu=\int \widetilde{h}d\mu'\]
where $\widetilde{h}\in\mathcal{L}^1(X,\mu')$.

\smallskip

On the other hand, note that our claim is true for ergodic measures. Recall also that, since the measures $\mu_z'$ is ergodic, the Lyapunov exponents are invariant functions which are constant in the support of the measures $\mu_z'$. Now, the set $R(z)=pr^{-1}(\mathcal{R}(A,\mu'_z))$ is invariant by the projective cocycle, so has $\mu_{(z,w)}$--measure 0 or 1, but
\[\mu_{(z,w)}(\log(g))=\mu_{(y,t)}(\log(g))=
\lambda^-(y)-\lambda^+(y)\]
for $(y,t)\in R(z)$  $\mu_{(z,w)}$--a.e.. The preceding implies that $\mu_{(z,w)}(R(z))\neq0$, and so is equal to 1, which completes the proof of the claim.

\smallskip

Continuing with the proof of the Lemma, applying Birkhoff Ergodic Theorem to the function $\phi=\log(g)$, $\mu$--a.e. $(z,w)\in\widehat{K}$, there exist the limit
\[\widetilde{\phi}(z,w)=\lim_{n\rightarrow\infty}\frac{1}{n}\sum_{j=0}
^{n-1}\phi\circ M^j(w)=\lambda^-(z)-\lambda^+(z),\]
hence it follows that
\[\log(b)\geq\int\widetilde{\phi}d\mu(z,w)=\int\phi
d\mu(z,w)=\lim_{k\rightarrow\infty}\frac{1}{n_k}\log(g(n_k,\xi_x))\geq\log\beta_2,\]
which is a contradiction, because $\beta_2>b$.
\end{proof}

\smallskip

\begin{proof}[{\bf Proof of Criteria for Domination}]
Let $\beta_0$ be a constant such that $1>\beta_0>\beta>b$. From Proposition \ref{crit_dom_2}, to obtain domination we need only to prove that there exist po\-si\-ti\-ve integers $k_1$ and $m_1$, such that for every $z$, there exists one direction $\xi_z$ satisfying
$g(k_1,M^m\xi_z)<\beta_0^{k_1}$ for all $m>m_1$, as in equation (\ref{eq:04}).

\smallskip

If not, we have that every pair $k_1,\, m_1\in\N$, there exists $z\in X$ such that for every $\xi\in\Cb_{z}$ we have that $g(k_1,M^m\xi)\geq\beta_0^k$ for some $m>m_0$. We conclude, in particular, that for every $k$ there exist $z_k\in X$ and $m_k>k$ such that $g(k,M^{m_k}\xi_{k})\geq\beta_0^k$, where $\xi_{k}$ satisfies the equation (\ref{eq_cd_I}) in the statement of this Proposition.

\smallskip

We take $1>\beta_0>\beta_2>\beta$. Applying Corollary \ref{ap_p_lemma} to the constants $\beta_0$ and $\beta_2$, we conclude that there exists a sequence $(r_k)_{k\geq1}$ with $k-r_k\rightarrow\infty$ such that
\[g(s,M^{r_k}\xi_{k})\geq\beta_2^s,\, \, \, \textrm{for
every}\, \, \, 0<s\leq k-r_k.\]

\smallskip

Taking $z$ and $\varpi$ as an accumulations point of
$(f^{r_k}(z_k))_k$ and $(M^{r_k}(\xi_{k}))_k$, respectively, it follows that
\[\beta_2^n\leq g(n,\varpi),\, \, \, \textrm{for every}\, \, \, n>0.\]

\smallskip

On the other hand, since for every $k$ and  $m\ge m_0$ we have that $g(k_0,M^m(\xi_k))\leq\beta\sp{-k_0}$, so we conclude that there exists a constant $C>0$ such that for every $k>0$, $n\geq0$ and $m\geq m_0$ we have $g(n,M^m(\xi_k))\leq C\beta^{-n}$. Taking $\beta>\beta_1>b$ and $n_0$ large enough we obtain that for $n\ge n_0$, we have that $g(n,M^m(\xi_k))\leq \beta_1^{-n}$.
Passing to the limit we conclude that for all $m\ge m_0$ and $n\geq n_0$
\[g(n,M^m(\varpi))\leq\beta_1^{-n}.\]
In this point, we are in the hypothesis of the Criteria of Negative Exponent, hence there exists an invariant measure that is not partially hyperbolic supported in $X$, which is a contradiction.
\end{proof}

\section{Proof of Main Theorem}

This section is based in the ideas of Sylvain Crovisier (see \cite{C}), for the proof of the same result in the context of $C^2$ generic dimorphism in compact manifolds (vide, \cite{P-RH}). Our exhibition presents significant changes compared with that of Silvan, among others, we
have a different definition of critical point than \cite{P-RH}. Now we present a notion that allows to prove the Main Theorem.

\smallskip

\begin{defi}
Given $1>\beta_0>0$, we say that a projective cocycle $M$ satisfies the property $P(\beta_0)$ if there exist $k_0>0$, such that for every $k>k_0$ there exit $x_k\in X$,
$\xi_k\in\Cb_{x_k}$ and $m_k\geq0$ such that:
\begin{itemize}
\item[{\rm 1.}] $g({-n},\xi_k)\geq \beta_0^{-n}$, for every $1\leq n \leq k$,
\item[{\rm 2.}] $g(k,M^{m_k}(\xi_k))\geq1$.
\end{itemize}
\end{defi}

\smallskip

The next Lemma, will be prove in the next Section.

\smallskip

\begin{lm}\label{num_sub_lemma}
Let $(a_n)_{n\in\Z}\subset \R$ be a sequence. Assume that there exist $n_0\le0\le n_1$ and $-\infty<\delta_+\le\delta_-<0$ such that:
\begin{itemize}
\item[{\it a)}] $a_n-a_{n_0}\ge(n-n_0)\delta_-$, for all $n\le n_0$,
\item[{\it b)}] $a_n-a_{n_1}\ge(n-n_1)\delta_+$, for all $n\ge n_1$.
\end{itemize}
Then there exists ${N}\in[n_0,n_1]$ such that $a_{\pm n+N}-a_N\ge \pm n\delta_\pm$ for all $n\ge0$.
\end{lm}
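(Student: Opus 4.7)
The plan is to reformulate both halves of the conclusion as statements about the shifted sequence $b_n := a_n - n\delta_+$, and then make a single optimal choice of $N$. The forward half, $a_{n+N} - a_N \ge n\delta_+$ for $n \ge 0$, is equivalent to $b_M \ge b_N$ for every integer $M \ge N$. The backward half, $a_{-n+N} - a_N \ge -n\delta_-$ for $n \ge 0$, is the statement $a_M - a_N \ge (M-N)\delta_-$ for every integer $M \le N$. I would choose $N \in \{n_0, n_0+1, \ldots, n_1\}$ to be any index minimizing $b_n$ over this finite set.

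The forward inequality then splits into two cases. If $N \le M \le n_1$, then $b_M \ge b_N$ by the choice of $N$, which rearranges to $a_M - a_N \ge (M-N)\delta_+$. If $M > n_1$, hypothesis (b) gives $a_M - a_{n_1} \ge (M-n_1)\delta_+$; combining this with the previous case at $M = n_1$ yields $a_M - a_N \ge (M-N)\delta_+$.

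The backward inequality also splits into two cases, and this is the step where the distinction between $\delta_+$ and $\delta_-$ actually matters. If $n_0 \le M \le N$, the choice of $N$ gives $a_M - a_N \ge (M-N)\delta_+$; since $M - N \le 0$ and $\delta_+ \le \delta_-$, multiplying flips the inequality to $(M-N)\delta_+ \ge (M-N)\delta_-$, and hence $a_M - a_N \ge (M-N)\delta_-$. If $M < n_0$, I would chain hypothesis (a) with the previous case at $M = n_0$:
\[
a_M - a_N \ge (M-n_0)\delta_- + (n_0-N)\delta_+ \ge (M-n_0)\delta_- + (n_0-N)\delta_- = (M-N)\delta_-,
\]
again using $n_0 - N \le 0$ together with $\delta_+ \le \delta_-$.

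The main subtlety to anticipate is precisely this sign-manipulation $(M-N)\delta_+ \ge (M-N)\delta_-$ valid for $M \le N$: it is what lets the slope-$\delta_+$ information inherent in the choice of $N$ produce the weaker slope-$\delta_-$ conclusion in the backward direction. Once this observation is in hand, no further obstacle arises and the verification reduces to the four elementary cases above.
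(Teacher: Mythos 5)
Your proof is correct, and it takes a genuinely different (and simpler) route than the paper's. All four cases check out: minimizing $b_n=a_n-n\delta_+$ over the finite set $\{n_0,\dots,n_1\}$ gives the slope-$\delta_+$ estimate on both sides of $N$ within $[n_0,n_1]$, hypotheses (a) and (b) extend it past $n_0$ and $n_1$ by chaining, and the sign flip $(M-N)\delta_+\ge(M-N)\delta_-$ for $M\le N$ (valid since $\delta_+\le\delta_-$) correctly converts the tilted bound into the required slope-$\delta_-$ bound in the backward direction. The paper instead first normalizes by subtracting $\min\{a_{n_0},a_{n_1}\}$ to reduce to an auxiliary claim, interpolates the sequence by a continuous piecewise-linear function $h$, constructs two supporting lines $y=x\delta_-+d_-$ on $[n_0,0]$ and $y=x\delta_++d_+$ on $[0,n_1]$ via suprema, notes that the tangency set has integer boundary, and splits into the cases $d_-\le d_+$ and $d_+\le d_-$ to select $N$ as an extreme integer of the tangency set, verifying the conclusion through a five-branch piecewise estimate. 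Your single tilted minimization replaces the interpolation and both supporting lines at once: tilting by the steeper slope $\delta_+$ and taking the discrete minimizer automatically produces the right one-sided behaviour on each side, with $\delta_+\le\delta_-$ absorbing the asymmetry. What the paper's version buys is only that the tangent-line picture makes the geometry of the supporting wedge at $N$ explicit; your argument is shorter, stays entirely discrete, and works directly with the hypotheses as stated without the normalization step.
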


\smallskip

\begin{pr}\label{pad_imply_adcrit} Let $1>\beta_0>0$. If the projective cocycle $M$ satisfies the property $P(\beta_0)$, then for every $\beta\in\Delta$ which
$(\beta_0,\beta_0)\ge \beta$ the set $\crit{\beta}$ is not empty.
\end{pr}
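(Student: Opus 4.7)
The plan is to construct a critical point as a subsequential limit of base points $f^{N_k}(x_k)$ equipped with shifted directions $M^{N_k}\xi_k$, where the shifts $N_k$ are selected via Lemma~\ref{num_sub_lemma}. Fix $\beta=(\beta_-,\beta_+)\in\Delta$ with $(\beta_0,\beta_0)\ge\beta$; unpacking the partial order this is equivalent to $\beta_+\le\beta_0\le\beta_-$. For each $k>k_0$, property $P(\beta_0)$ supplies a triple $(x_k,\xi_k,m_k)$ as in the definition.

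The first step is to upgrade the single multiplicative forward estimate $g(k,M^{m_k}\xi_k)\ge1$ into a pointwise lower bound along a forward orbit segment. I would apply Corollary~\ref{ap_p_lemma} with $\gamma_0=1$ and some $\gamma_1\in(\beta_+,1)$ to the sequence $(g(1,M^{m_k+\ell}\xi_k))_{\ell=0}^{k-1}$, whose product is $g(k,M^{m_k}\xi_k)\ge 1=\gamma_0^k$ (this is licit since $g(1,\cdot)$ is continuous on the compact bundle $\P(X)$, hence uniformly bounded away from $0$ and $\infty$). This yields a Pliss time $j_k\in[0,k]$ with $k-j_k\ge k\delta_0-1$ such that, writing $p_k:=m_k+j_k$,
\[
 g(i,M^{p_k}\xi_k)\ge\gamma_1^{i}\ge\beta_+^{\,i}\qquad(0\le i\le k-j_k).
\]
On the backward side, $\beta_0\le\beta_-$ yields $g(-n,\xi_k)\ge\beta_0^{-n}\ge\beta_-^{-n}$ for $n\in[1,k]$ directly from $P(\beta_0)$.

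Next, let $a^k_j:=\log g(j,\xi_k)$. The backward bound reads $a^k_n\ge n\log\beta_-$ on $[-k,0]$ (with $n_0=0$) and the Pliss output reads $a^k_n-a^k_{p_k}\ge(n-p_k)\log\beta_+$ on $[p_k,\,p_k+(k-j_k)]$. I would apply Lemma~\ref{num_sub_lemma} (in an appropriate finite version) with $n_0=0$, $n_1=p_k$, $\delta_-=\log\beta_-$, $\delta_+=\log\beta_+$; note that $\delta_+\le\delta_-<0$ since $\beta_+\le\beta_-<1$. This produces $N_k\in[0,p_k]$ with
\[
 a^k_{N_k+n}-a^k_{N_k}\ge n\log\beta_+,\qquad
 a^k_{N_k-n}-a^k_{N_k}\ge -n\log\beta_-,
\]
valid for $n$ in the ranges $[0,k-j_k]$ and $[0,k]$ respectively. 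Setting $y_k:=f^{N_k}(x_k)$ and $\eta_k:=M^{N_k}(\xi_k)\in\Cb_{y_k}$, these rewrite as $g(n,\eta_k)\ge\beta_+^{n}$ for $0\le n\le k-j_k$ and $g(-n,\eta_k)\ge\beta_-^{-n}$ for $0\le n\le k$.

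Finally, compactness of the projective bundle $\P(X)$ over the compact space $X$ allows me to extract a subsequence with $(y_k,\eta_k)\to(y^*,\eta^*)\in\P(X)$. Since both $k$ and $k-j_k$ tend to infinity, for every fixed $n\ge0$ the above estimates hold for all sufficiently large $k$; continuity of $\eta\mapsto g(\pm n,\eta)$ on $\P(X)$ then yields $g(n,\eta^*)\ge\beta_+^{n}$ and $g(-n,\eta^*)\ge\beta_-^{-n}$ for all $n\ge0$, so $y^*\in\crit\beta$ with critical direction $\eta^*$. The main technical obstacle is precisely the finite application of Lemma~\ref{num_sub_lemma}: as stated, the lemma requires the slope inequalities (a) and (b) on the infinite rays $n\le n_0$ and $n\ge n_1$, whereas here they are only available on the controlled windows $[-k,0]$ and $[p_k,p_k+(k-j_k)]$. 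I would handle this either by extending $a^k_j$ linearly with the respective slopes outside those windows (arguing that the selected $N_k$ stays inside $[0,p_k]$ so that the conclusion restricts to the original sequence on the controlled ranges) or by verifying a finite analogue of Lemma~\ref{num_sub_lemma} whose proof should be essentially identical; the remaining passage to the limit is routine.
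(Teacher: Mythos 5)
Your proposal is correct and follows essentially the same route as the paper's proof: Pliss's Lemma (Corollary \ref{ap_p_lemma}) to convert $g(k,M^{m_k}\xi_k)\ge1$ into a forward window of pointwise estimates, then Lemma \ref{num_sub_lemma} applied to the log-multiplier sequence extended linearly outside the controlled windows (which is exactly how the paper resolves the finite-versus-infinite issue you flag), and finally a compactness/continuity passage to the limit. The only cosmetic difference is that the paper reindexes ($s$ large with $s\delta_0-1>k$) so the forward window has length at least $k$, whereas you keep the window of length $k-j_k\ge k\delta_0-1$, which still tends to infinity and suffices.
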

\begin{proof}
First, let $(\beta_0,\beta_0)\ge \beta=(\beta_-,\beta_+)$. We recall that $\beta_+\le\beta_0\le\beta_-$, and that for each $0\le n\le k$ we have that $g(-n,\xi_k)\ge\beta_0\sp{-n}\ge\beta_-\sp{-n}$.

Now, we will apply the Corollary \ref{ap_p_lemma}. Let $k>0$, $\gamma_0=1$, $\gamma_1=\beta_+$ and let $n_0$ and $\delta_0>0$ be the numbers given by this Corollary.
If we choose $s>n_0$ such that $s\delta_0-1>k$, since that $g(s,M\sp{m_s}\xi_s)\geq1$, then there exist $0\leq j<s$ such that $s-j>s\delta_0-1>k$ and
\[g(i,M\sp{m_s+j}\xi_s)\geq\beta_+\sp
i,\, \, \, \, \textrm{ for every }\, \, \, 0<i\leq s-j.\]
Therefore taking $y_k=x_s$, $\upsilon_k=\xi_s$ and $n_k=m_s+j$, we obtain that for every $k>0$, there exist $y_k\in X$, $\upsilon_k\in\Cb_{y_k}$  and $n_k\geq0$ such that, for all $0<n\leq k$,
\[g(-n,\upsilon_k)\geq \beta_-^{-n}\, \textrm{ and }\,
g(n,M^{n_k}\upsilon_k)\geq\beta_+^{n}.\]

Define $n_0=0$, $n_1=n_k$, $\delta_\pm=\log(\beta_\pm)$ and
\[
a_n=\begin{cases}
n\delta_- & , \, n<-k\\
\log(g(n,\upsilon_k)) & , \, -k\le n\le n_k+k\\
(n-n_k)\delta_++a_{n_k} & , \, n>n_k
\end{cases}.
\]

It is not difficult to see that we are in the hypothesis of Lemma \ref{num_sub_lemma}, and hence there exists $-l_k\in[0,n_k]$ such that $a_{\pm n-l_k}-a_{-l_k}\ge\pm n\delta_\pm$ for all $n\ge0$. From the construction of sequence $(a_n)_n$ we can conclude that $g(\pm n,M\sp{l_k}\xi_x)\ge \beta_\pm\sp{\pm n}$ for all $0\le n\le k$.

Finally, if we take $z_k=f^{l_k}(y_k)$ and
$\omega_k=M^{l_k}(\upsilon_k)$, we have that for each $0<n\leq k$
\[g({-n},\omega_k)\geq\beta_-^{-n}\, \, \, \textrm{ and
}\, \, \, g({n},\omega_k)\geq\beta_+^n.\] To end, take $(z,\omega)$ an adherence point of $(z_k,\omega_k)$, and we have that for $n\geq0$
\[g({-n},\omega)\geq\beta_-^{-n}\, \, \, \textrm{ and
}\, \, \, g({n},\omega)\geq\beta_-^n,\]
then $\crit{\beta}$ is nonempty as asserted.
\end{proof}

\smallskip

We denote by $\supp(X)$ the closed subset of $X$ that support all measure $f$--invariant, i.e.,
\[\supp(X)=\overline{\cup\bpl \supp(\nu)\,:\,\nu\, \textrm{ is }\,
f\textrm{-invariant }\, \bpr}.\]

\smallskip

\begin{lm}If  $\,1>\beta_0>b$, then
$\supp(X)\subset \omega(H^-(\beta_0))$.
\end{lm}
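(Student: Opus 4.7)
The plan is to combine Theorem \ref{thm_A} with Poincar\'e's recurrence theorem. Since $\supp(X)$ is the closure of the union of $\supp(\nu)$ over all $f$--invariant Borel probability measures $\nu$ supported in $X$, and since $\omega(H^-(\beta_0))$ is a closed set, it is enough to prove that $\supp(\nu)\subseteq\omega(H^-(\beta_0))$ for every such $\nu$. I would fix one such $\nu$ and argue directly, without invoking ergodic decomposition.

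The hypothesis $\beta_0>b$ lets us apply Theorem \ref{thm_A} with parameters $\gamma=1$ and $\beta=\beta_0$ (the constraint $1\le\gamma<b^{-1}\beta$ reduces exactly to $\beta_0>b$). This produces a nonempty compact block $H^-(\beta_0)$ and, more importantly, guarantees that $X_0^-=\bigcup_{n\in\Z}f^n(H^-(\beta_0))$ has $\nu$--measure one. Poincar\'e's recurrence theorem, applied to the probability $\nu$, ensures that the set of $\nu$--recurrent points also has full measure. Intersecting these two full--measure sets with $\supp(\nu)$ yields a $\nu$--full set $Y\subseteq\supp(\nu)\cap X_0^-$ whose points are recurrent. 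For any $y\in Y$ we can write $y=f^k(z)$ with $z\in H^-(\beta_0)$ and $k\in\Z$; recurrence of $y$ provides integers $m_j\to\infty$ with $f^{m_j}(y)\to y$, and therefore $f^{m_j+k}(z)\to y$ with $m_j+k\to\infty$. Consequently $y\in\omega(\{z\})\subseteq\omega(H^-(\beta_0))$, so $Y\subseteq\omega(H^-(\beta_0))$.

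The proof then closes by a density argument: $Y\subseteq\supp(\nu)$ has full $\nu$--measure, so it is dense in $\supp(\nu)$; since $\omega(H^-(\beta_0))$ is closed, $\supp(\nu)\subseteq\overline{Y}\subseteq\omega(H^-(\beta_0))$, and taking closures over all $\nu$ gives $\supp(X)\subseteq\omega(H^-(\beta_0))$. The only subtlety is conceptual rather than technical: membership in $H^-(\beta_0)$ is a condition on \emph{past} iterates of the projective cocycle, while $\omega(H^-(\beta_0))$ is built from \emph{forward} orbits; the allowance of an arbitrary integer shift $f^n$ in the definition of $X_0^-$, together with Poincar\'e recurrence, is exactly what bridges this gap.
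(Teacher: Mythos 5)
Your proof is correct and follows essentially the same route as the paper: the paper's own argument also combines Theorem \ref{thm_A} (which puts a full-measure set of points on orbits passing through $H^-(\beta_0)$) with Poincar\'e recurrence and the density of full-measure sets in $\supp(\nu)$, then closes by taking closures. Your version merely spells out the recurrence step and invokes the statement of Theorem \ref{thm_A} rather than its proof, but the mechanism is identical.
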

\begin{proof}
Any point in the support of an invariant measure $\nu$ is approximated by regular points. By the proof of Theorem \ref{thm_A}\footnote{See subsection \ref{proofteoblock}}, any $x\in\mathcal{R}(A,\nu)$ has infinitely many iterates in $H^-(\beta_0)$. The previous remark and the Poincar\'e recurrence theorem implies that
\[\supp(\nu)\subset
\omega(H^-(\beta_0))=\overline{\bigcup_{z\in
H^-(\beta_0)}\omega(z)},\]
and this implies that $\supp(X)\subset
\omega(H^-(\beta_0))$.
\end{proof}

\smallskip

\begin{lm}\label{ifnotcd_Xstar_ds}
If there exist $1>\beta_0>b$ such that the property
$P(\beta_0)$ is not satisfied, then the set $\supp(X)$ has
dominated splitting.
\end{lm}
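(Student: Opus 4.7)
The plan is to apply the Criteria for Domination to the compact $f$-invariant set $\supp(X)$. Since every $f|_{\supp(X)}$-invariant measure is an $f$-invariant measure supported in $X$, and $X$ is $b$-asymptotically dissipative for $A$, both hypotheses of the Criteria transfer to $\supp(X)$. Consequently, it suffices to produce a uniform $k\geq 1$ and, for every $y\in\supp(X)$, a direction $\tau_y\in\Cb_y$ satisfying $g(k,M^l\tau_y)\leq \beta_0^{-k}$ for all $l\geq 1$.

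First I would unwind the negation of $P(\beta_0)$: it yields arbitrarily large integers $k$ with the property that whenever $x\in X$ and $\xi\in\Cb_x$ satisfy $g(-n,\xi)\geq\beta_0^{-n}$ for every $1\leq n\leq k$, one has $g(k,M^m\xi)<1$ for \emph{every} $m\geq 0$. Fix one such $k$ once and for all.

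Next I would combine this uniform $k$ with the preceding lemma, which gives $\supp(X)\subset\omega(H^-(\beta_0))$. For $z\in H^-(\beta_0)$, by definition there is a direction $\sigma_z\in\Cb_z$ with $g(-n,\sigma_z)\geq\beta_0^{-n}$ for all $n\geq 0$, so the previous paragraph applies to $(z,\sigma_z)$ and delivers $g(k,M^m\sigma_z)<1$ for every $m\geq 0$. Now pick $y\in\supp(X)$ arbitrary and sequences $z_j\in H^-(\beta_0)$, $m_j\geq 0$ with $f^{m_j}(z_j)\to y$. Setting $\tau_j:=M^{m_j}\sigma_{z_j}\in\Cb_{f^{m_j}(z_j)}$, the compactness of the projective bundle allows, up to extracting a subsequence, to assume $\tau_j\to\tau_y\in\Cb_y$. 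For each fixed $l\geq 0$,
\[g(k,M^l\tau_j)=g(k,M^{m_j+l}\sigma_{z_j})<1,\]
so continuity of $g(k,\cdot)$ and of $M^l$ yields $g(k,M^l\tau_y)\leq 1\leq\beta_0^{-k}$ (using $\beta_0<1$). Applying the Criteria for Domination on $\supp(X)$ with $k_0=k$, $m_0=0$ and $\beta=\beta_0$ then delivers the required dominated splitting.

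The delicate part is exactly the limiting step just above: the direction $\tau_y$ need not vary continuously with $y$ and may depend on the extracted subsequence, but this is harmless because the Criteria for Domination only requires pointwise existence of such a direction. What makes the whole scheme go through is the \emph{uniformity} of $k$ in the negation of $P(\beta_0)$ --- one and the same $k$ controls every $(x,\xi)$ satisfying the backward-expansion condition --- which is precisely what one needs to transport the bound $g(k,M^\cdot\xi)<1$ from the backward-expansive directions on $H^-(\beta_0)$ to all of $\supp(X)$ via the $\omega$-limit.
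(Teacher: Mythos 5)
Your proof is correct and follows essentially the same route as the paper's: negate $P(\beta_0)$ to extract a single uniform $k$ controlling all backward-$\beta_0$-expansive directions, apply it to the critical directions over $H^-(\beta_0)$, pass the bound $g(k,M^m\xi)<1\le\beta_0^{-k}$ to limit points via continuity of $g$, and invoke the Criteria for Domination together with $\supp(X)\subset\omega(H^-(\beta_0))$. The only (harmless) difference is that you verify the Criteria directly on $\supp(X)$, whereas the paper verifies it on the larger set $\omega(H^-(\beta_0))$ and then restricts.
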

\begin{proof}
As the property $P(\beta_0)$ is not satisfied, then there exists $k>0$ such that for every $x\in X$ and $\upsilon\in\Cb_x$ both $g({-n},\upsilon)<\beta_0\sp{-n}$ for $1\leq n\leq k$, or $g(k,M\sp m\upsilon)<1$ for every $m\geq1$. In particular, for points $x\in H\sp-(\beta_0)$ with critical direction $\xi$, the first inequality can not holds, then
\begin{equation}\label{pistoru}
g(k,M\sp m\xi)<1.
\end{equation}

\smallskip

We claim that $\omega(H^-(\beta_0))$ has dominated
splitting. From equation (\ref{pistoru}), we have that
for every $x\in H\sp-(\beta_0)$ (and denoting his critical direction by $\xi$), $g(k,M\sp m\xi)<\beta_0\sp{-k}$. Let $z\in\omega(x)$ and $(m_l)_l$ be a sequence of positives integers goes to infinity such that $f\sp{m_l}(x)\rightarrow z$. By passing to a subsequence if necessary, there exists a direction $\xi_z\in\Cb_z$ such that  $M\sp{m_l}(\xi)\rightarrow \xi_z$. From continuity of $g$ we conclude that: for every $x\in H\sp-(\beta_0)$, and $z\in\omega(x)$ there exist $\xi_z\in\Cb_z$ satisfying
\[\beta_0\sp{-k}>g(k,M\sp{m+m_l}\xi)=
g(k,M\sp{m}(M\sp{m_l}\xi))\rightarrow g\sp
k(M\sp m(\xi_z)).\]
Hence this implies that $\omega(H^-(\beta_0))$ satisfies the hypothesis of the Criteria for Domination. In particular $\supp(X)$ has dominated splitting.
\end{proof}

\smallskip

We can rewrite the item \ref{basical_domination} of Theorem \ref{pr:ds_vs_hyp} in terms of blocs of dominations. Moreover, we can state.

\smallskip

\begin{lm}\label{dom_and_constant}
Let $\Lambda\subseteq X$ be a compact $f$--invariant. Then the linear cocycle $A$ has dominated splitting in $\Lambda$ if and only if there exist $\gamma,\, {\beta}\sp*>0$ such that $\Lambda\subset\gamma \Hc\sp-(\beta)$, and if and only if $\Lambda\subset\gamma \Hc\sp+(\beta)$,
where $\beta\in({\beta}\sp*,1)$ is arbitrarily.
\end{lm}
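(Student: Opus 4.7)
The plan is to deduce this lemma directly from item \ref{basical_domination} of Theorem \ref{pr:ds_vs_hyp}, which characterizes dominated splitting in terms of the existence, at every base point, of a direction whose multiplier norm grows at least geometrically in forward (or backward) time. The blocks of domination $\gamma\Hc^\pm(\beta)$ are essentially a repackaging of this exponential lower bound, so the proof reduces to a careful translation between the two formulations.

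\textbf{Forward direction.} Assume $A$ has dominated splitting on $\Lambda$. Applying item \ref{basical_domination} of Theorem \ref{pr:ds_vs_hyp} to the restriction of $A$ to the invariant subbundle over the compact $f$-invariant set $\Lambda$, one obtains constants $C>0$ and $\lambda>1$ such that, for every $z\in\Lambda$, there are directions $\tau_z,\sigma_z\in\Cb_z$ with $g(n,\tau_z)\geq C\lambda^n$ and $g(-n,\sigma_z)\geq C\lambda^n$ for every $n>0$. Set $\beta^*=\lambda^{-1}\in(0,1)$ and fix any $\gamma$ with $0<\gamma<\min(1,C)$. For any $\beta\in(\beta^*,1)$ we have $\lambda\beta>1$, so $(\lambda\beta)^n\geq 1$ for all $n\geq 1$, which yields $g(n,\tau_z)\geq C\lambda^n=C(\lambda\beta)^n\beta^{-n}\geq C\beta^{-n}>\gamma\beta^{-n}$; and at $n=0$, $g(0,\tau_z)=1>\gamma$. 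Thus $z\in\gamma\Hc^+(\beta)$, so $\Lambda\subset\gamma\Hc^+(\beta)$. The identical argument applied to $\sigma_z$ gives $\Lambda\subset\gamma\Hc^-(\beta)$.

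\textbf{Backward direction.} Suppose $\Lambda\subset\gamma\Hc^+(\beta)$ for some $\gamma>0$ and some $\beta\in(\beta^*,1)$. Then for every $z\in\Lambda$ there is a direction $\xi_z\in\Cb_z$ with $g(n,\xi_z)>\gamma\beta^{-n}$ for all $n\geq 0$. Setting $C=\gamma$ and $\lambda=\beta^{-1}>1$, this is exactly the hypothesis of item \ref{basical_domination} of Theorem \ref{pr:ds_vs_hyp} applied to $A|_\Lambda$, so $A$ has dominated splitting on $\Lambda$. The case $\Lambda\subset\gamma\Hc^-(\beta)$ is handled analogously via the equivalent backward formulation in item \ref{basical_domination}.

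There is essentially no serious obstacle, since the proof is a translation of vocabulary. The only minor subtleties are (i) handling the strict inequality in the definition of $\Hc^\pm$, addressed by choosing $\gamma$ strictly less than $\min(1,C)$, and (ii) ensuring that a single pair $(\gamma,\beta^*)$ works uniformly across all $\beta\in(\beta^*,1)$, which is guaranteed because $(\lambda\beta)^n\geq 1$ holds throughout that range.
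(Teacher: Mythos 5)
Your proof is correct and follows essentially the same route as the paper: both directions are obtained by translating between the blocks $\gamma\Hc^\pm(\beta)$ and item \ref{basical_domination} of Theorem \ref{pr:ds_vs_hyp}, with the forward direction reading off the constants $C,\lambda$ from the dominated splitting and the backward direction being an immediate application of that item. The only difference is that you spell out the bookkeeping (the choice $\gamma<\min(1,C)$ to absorb the strict inequality in the definition of $\Hc^\pm$ and the uniformity over $\beta\in(\beta^*,1)$) which the paper's terse proof leaves implicit.
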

\begin{proof}
Let $T\Lambda=E\oplus F$ be a dominated splitting. By Proposition \ref{pr:ds_vs_hyp},  we have that there exist constants $C>0$ and $0<\lambda<1$ such that $g^{-n}(F_z)\geq C\sp{-1}\lambda^{-n}$ and $g^{n}(E_z)\geq C\sp{-1}\lambda^{-n}$. We may take $\lambda$ minimal with this property. Then taking $C\sp{-1}=\gamma$ and $\beta\sp*=\lambda$ we obtain the necessary.

\smallskip

The suffices direction is immediate from item \ref{basical_domination} of Theorem \ref{pr:ds_vs_hyp}.
\end{proof}

\smallskip

\begin{pr}\label{Xnotds_thencd}
If $\,\supp(X)$ has dominated splitting but $X$ does not have dominated splitting, then there exists $\beta\sp*$ (minimal) such that the property $P(\beta_0)$ is satisfied on $X$, for every $\beta_0\in(\beta\sp*,1)$.
\end{pr}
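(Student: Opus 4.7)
The plan combines two ingredients: the failure of dominated splitting on $X$, which via the Criteria for Domination produces bad points where every direction has some forward $k$-window with $g>1$, and the dominated splitting on $\supp(X)$, which provides stable directions with uniform backward $g$-expansion. The witness $(x_k,\xi_k,m_k)$ of $P(\beta_0)$ will be a triple with $x_k$ close to $\supp(X)$, so that backward expansion of a nearby stable direction transfers to $\xi_k$ by continuity, and whose forward orbit meets a bad point $z_k$, supplying the condition $g(k,M^{m_k}\xi_k)\geq 1$.

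Setup: by Lemma \ref{dom_and_constant} applied to $\supp(X)$, there exist $\gamma>0$ and a minimal $\beta^*\in(0,1)$ such that $\supp(X)\subset\gamma\Hc^-(\beta)$ for every $\beta\in(\beta^*,1)$. Fix $\beta_0\in(\beta^*,1)$ and intermediate constants $\beta^*<\beta_1<\beta_2<\beta_0$; every $w\in\supp(X)$ then admits a unique stable direction $\eta_w$ (Lemma \ref{uniq_directions}) satisfying $g(-n,\eta_w)>\gamma\beta_1^{-n}$ for all $n\geq 0$.

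Construction of the witness for each $k\geq 1$: (i) by the Criteria for Domination in contrapositive form (applied for any $\beta\in(b,1)$, $k_0=k$, $m_0=0$), pick $z_k\in X$ such that every $\xi\in\Cb_{z_k}$ admits $m\geq 1$ with $g(k,M^m\xi)>\beta^{-k}>1$; (ii) the compact $f$-invariant $\alpha$-limit $\alpha(z_k)$ supports an invariant probability whose support lies in $\supp(X)$ by Krylov--Bogolyubov, so $\alpha(z_k)\cap\supp(X)\neq\emptyset$, and we pick some $w$ in this intersection; (iii) applying Corollary \ref{ap_p_lemma} backward to $\eta_w$ with growth constants $\beta_1^{-1},\beta_2^{-1}$ and a sufficiently large $n$, we extract a shift $j$ so that $w_k:=f^{-j}(w)\in\alpha(z_k)\cap\supp(X)$ and $g(-i,\eta_{w_k})\geq\beta_2^{-i}$ for every $1\leq i\leq k$; (iv) by uniform continuity of the finitely many functions $g(-i,\cdot)$ on $\P(X)$, and since $\beta_2<\beta_0$, there is $\delta>0$ such that every $y\in X$ with $d(y,w_k)<\delta$ carries a direction $\tilde\eta$ (close to $\eta_{w_k}$ in the local trivialization) with $g(-i,\tilde\eta)\geq\beta_0^{-i}$ for all $1\leq i\leq k$; (v) since $w_k\in\alpha(z_k)$, pick $N_k$ with $d(f^{-N_k}(z_k),w_k)<\delta$ and set $x_k=f^{-N_k}(z_k)$, $\xi_k=\tilde\eta$; (vi) apply (i) to the direction $M^{N_k}(\xi_k)\in\Cb_{z_k}$ to obtain $m_k'\geq 1$ with $g(k,M^{m_k'+N_k}(\xi_k))>1$, and set $m_k=m_k'+N_k$.

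Condition 1 of $P(\beta_0)$ holds at $(x_k,\xi_k)$ by step (iv) and condition 2 by step (vi); hence $P(\beta_0)$ is satisfied for every $\beta_0>\beta^*$, and the minimality statement follows. The main technical obstacle is step (iv): the gap $\beta_2<\beta_0$ provided by the Pliss shift in step (iii) must absorb the continuity error uniformly over all $1\leq i\leq k$. This is achievable because, for each fixed $k$, only finitely many iterates are involved and each $g(-i,\cdot)$ is continuous on the compact space $\P(X)$, so the radius $\delta$ may be chosen as the minimum of finitely many continuity thresholds; the bookkeeping is the main delicate point of the proof.
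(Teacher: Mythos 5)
Your proposal is correct and follows essentially the same route as the paper: negate the Criteria for Domination to produce the bad points, intersect the $\alpha$-limit with $\supp(X)$ to find a backward-expanding direction, transfer it to a backward iterate of the bad point by continuity, and invoke Pliss (Corollary \ref{ap_p_lemma}) to clean up the constant. The only difference is cosmetic — you apply Pliss on $\supp(X)$ before the continuity transfer (absorbing the error in the gap $\beta_2<\beta_0$), whereas the paper transfers first and applies Pliss along the finite orbit segment afterwards; both orderings work.
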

\begin{proof}
Let $\beta\sp*$ as in Lemma \ref{dom_and_constant}. Since that $X$ does not have a dominated splitting and denying the Criteria for Domination, follows that for every positive integer $k$, there exists a point $x_k\in X$, and an integer $m>0$ such that for every direction $\omega\in\Cb_{x_k}$ we have that
\begin{equation}\label{first_eq}
g(k,M^{m}\omega)\geq1.
\end{equation}

\smallskip

On the other hand, the set $\alpha$-limit of $x_k$, that we denote by $\alpha(x_k)$, supports an $f$--invariant measure, hence there exists $z_0\in\alpha(x_k)\cap\supp(X)$. From the preceding Corollary, there exists one direction $\xi_0\in\Cb_{z_0}$ such that
\[g({-n},\xi_0)>\gamma(\beta')\sp{-n},\,
\, \, \textrm{ for every }\, \, \, n\geq1,\]where
$\beta'\in(\beta\sp*,1)$.

\smallskip

Take $\beta\sp*<\beta'<\beta_0<1$ and $k$ fixed.

\smallskip

Let $(n_t)_t\nearrow\infty$ such that $f^{-n_t}(x_k)\rightarrow z_0$. For every positive integer $s$, we can find some neighborhood $U_s\subset\P(X)$ of $\xi_0$ such that for every $\xi\in U_s$, holds that $g({-n},\xi)\geq\gamma(\beta')\sp{-n}$ for all
$1\leq n\leq s$. If we take $t$ great enough, $f^{-n_t}(x_k)$ is inside of the projection in $X$ of neighborhood $U_s$. Hence, there
exists $\xi_s\in\Cb_{f^{-n_t}(x_k)}$ such that
$g({-n},\xi_s)\geq\gamma(\beta')\sp{-n}$ for all $1\leq n\leq s$. Note that for $s$ great enough we have that $g({-s},\xi_s)\geq\beta_0\sp{-s}$, hence we are in the hypothesis of Corollary \ref{ap_p_lemma}.

\smallskip

We conclude that, we can find $s$ and $l_s$ such that $s-l_s>k$ and
\[g({-n},M\sp{-l_s}\xi_s)\geq\beta_0^{-n},\, \, \, \textrm{ for every }\, \, \, 0<n\leq s-l_s,\]
in particular, writing $\upsilon_k=M\sp{-l_s}(\xi_s)$, we conclude that
\[g({-n},\upsilon_k)\geq\beta_0^{-n},\, \, \,
\textrm{ for every }\, \, \, 0<n\leq k.\]

\smallskip

From equation (\ref{first_eq}), we have that there exist $m_k$ such that $g(k,M\sp{m_k}\upsilon_k)\geq1$, so the property $P(\beta_0)$ is satisfied.
\end{proof}

\smallskip

With this in mind, we can prove one direction of Main Theorem. In the next subsection, we present their proof in the other direction.

\smallskip

\begin{proof}[{\bf Proof of Main Theorem:}] {\it If $X$ does not have dominated splitting, then for each %$1>{\beta}_0> b$ we have that: for all
$\beta\in\Delta\sp+$ with $\beta_+>b$ we have $\crit\beta\neq\emptyset$.}

\bigskip

\noindent
We claim that there exist $1>\beta_0>b$ such that the property $P(\beta_0)$ holds.

\smallskip

In fact, if we assume that for each $1>\beta_1>b$ not holds, then from Lemma \ref{ifnotcd_Xstar_ds} we conclude that $\supp(X)$ has dominated splitting. Since $X$ does not have dominated splitting, from Proposition \ref{Xnotds_thencd} there exists $\beta\sp*$ such that for all $1>\beta_1>\max(b,\beta\sp*)$ the property $P(\beta_1)$ holds, that is a contradiction, hence as was claimed, the property $P(\beta_0)$ holds.

\smallskip

Proposition \ref{pad_imply_adcrit} imply that $\crit\beta\neq\emptyset$ for each $(\beta_0,\beta_0)\ge\beta$.

%\bigskip
%
%
%We suppose, by contradiction, that there exists $\beta=(\beta_-,\beta_+)$ with $\beta_+>b$ such that $\crit\beta=\emptyset$. We recall that for each $\beta\sp\prime\ge\beta$ we have that $\crit{\beta\sp\prime}\subset\crit\beta$, where we conclude that $\crit{\beta\sp\prime}=\emptyset$.
%
%\smallskip
%
%On the other hand, we claim that for each $b<\beta_0<1$ the property $P(\beta_0)$ not holds. In fact, if not, from Proposition \ref{pad_imply_adcrit}, we have that for each $\beta\sp\prime\ge(\beta_0,\beta_0)$ the set $\crit{\beta\sp\prime}\neq\emptyset$. Since there exists $\beta\sp\prime\ge(\beta_0,\beta_0)$ such that $\beta\sp\prime\ge\beta$, we have a contradiction.
%
%\smallskip
%
%From Lemma \ref{ifnotcd_Xstar_ds}, we conclude that $\supp(X)$ has dominated splitting. Since $X$ does not have dominated splitting, from Proposition \ref{Xnotds_thencd}, it follows that for every $1>\beta_0>\max(b,\beta\sp*)$, the property  $P(\beta_0)$ is satisfied. That is a contradiction.
\end{proof}

\subsection{Critical Pair}

Now we work to proof the opposite direction of the Main Theorem. For this, we use the fact that for every critical point, there exists a critical value intrinsically linked with him. This is the notion of critical pair that we introduce in the following paragraph.

\smallskip

\begin{defi}\label{critical_pair} Let $\beta\in\Delta$. We say that a pair $(x,y)\in X\times X$ is a
$\beta$--critical pair if:
\begin{enumerate}
\item $x\in \crit\beta$, with critical direction $\xi$,
\item $y\in \cv\beta$, with critical direction $\varpi$,
\item there exist a sequence of positive integer $l_k$ such that
\[f^{l_k}(x)\rightarrow y\textrm{\, \,  and \, \, }
M^{l_k}\xi\rightarrow \varpi.\]
\end{enumerate}
\end{defi}

\smallskip

It follows directly of the previous definition the following Proposition.

\smallskip

\begin{pr}\label{ds_not_cp}
If $X$ has dominated splitting, then $X$ does not have a
$\beta$--critical pair.
\end{pr}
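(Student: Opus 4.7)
The plan is to argue by contradiction: assume $X$ has dominated splitting and $(x,y)$ is a $\beta$--critical pair with critical direction $\xi$ at $x$ and $\varpi$ at $y$, then derive a clash with the hyperbolicity of the projective cocycle provided by Theorem \ref{pr:ds_vs_hyp}.

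The first step is to identify $\xi$ and $\varpi$ with the canonical invariant sections. By Theorem \ref{pr:ds_vs_hyp}, domination gives an expansive section $\tau$ and a contractive section $\sigma$ of $M$, with $\sigma(z)\neq\tau(z)$ for every $z\in X$. A short computation (using the cocycle identity $g(-n,\cdot)=g(n,\cdot,M^{-1})$, or directly the bundle splitting $E\oplus F$) shows that $\sigma$ is not only contractive in forward time but also satisfies $g(-n,\sigma(z))\ge C^{-1}\lambda^{-n}$; that is, $\sigma$ is past-expansive. Symmetrically, $\tau$ is future-expansive. Now, since $x\in\crit{\beta}$, the critical direction satisfies $g(-n,\xi)\ge\beta_-^{-n}$ with $\beta_-^{-1}>1$, so $\xi$ is past-expansive. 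Lemma \ref{uniq_directions} says the past-expansive direction at $x$ is unique, hence $\xi=\sigma(x)$. Dually, $y\in\cv{\beta}$ means $y$ is a critical point for $A^{-1}$, and unwinding the definition gives $g(n,\varpi,A)\ge\beta_-^{-n}$, so $\varpi$ is future-expansive for $A$; uniqueness then forces $\varpi=\tau(y)$.

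The second step exploits the linking condition in Definition \ref{critical_pair}. Since $\xi=\sigma(x)$ and $\sigma$ is $M$--invariant, we have
\[
M^{l_k}\xi \;=\; M^{l_k}\sigma(x) \;=\; \sigma(f^{l_k}(x)).
\]
Because $\sigma\in\Gamma(X,\P(X))$ is continuous and $f^{l_k}(x)\to y$, the right-hand side converges to $\sigma(y)$. On the other hand, by hypothesis $M^{l_k}\xi\to\varpi=\tau(y)$. Uniqueness of limits gives $\sigma(y)=\tau(y)$, contradicting the hyperbolicity of $M$.

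The only slightly delicate point is verifying that $\sigma$ really is past-expansive under dominated splitting (so that Lemma \ref{uniq_directions} applies and identifies $\xi$ with $\sigma(x)$), but this is immediate from the bundle picture since $\sigma$ is the strong direction $[F]$ and backward iterates of $F$ are dominated by forward iterates of $E$ at $f^{-n}(z)$. Once this identification is made, the proof is a clean invariance-plus-continuity argument; no additional estimates or choice of iterates is needed beyond those built into the critical pair definition.
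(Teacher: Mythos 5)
Your proof is correct and follows the same route as the paper's: identify the critical direction $\xi$ with $\sigma(x)=[F_x]$ and the critical-value direction $\varpi$ with $\tau(y)=[E_y]$, then use $M$--invariance and continuity of $\sigma$ together with the linking condition $M^{l_k}\xi\to\varpi$ to force $\sigma(y)=\tau(y)$, contradicting hyperbolicity of $M$. In fact you supply the justification (past/future expansiveness plus Lemma \ref{uniq_directions}) for the identifications that the paper's very terse proof merely asserts.
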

\begin{proof}
If $A$ have dominated splitting $TX=E\oplus F$, then the angle of the invariant splitting is great of some $\alpha>0$. If $(x,y)$ is critical pair, the direction $F_x$ is defined by $\xi$, and $E_y$ is defined by $\varpi$, but by the third condition on the previous
definition we have that $M^{l_k}(F_x)\rightarrow E_y$, and this say that $F_y=E_y$; a contradiction.
\end{proof}

\smallskip

The following Proposition, related each $\beta$--critical
point with a $\beta$--critical value.

\smallskip

\begin{pr}\label{cr_pair}
Let $\beta\in\Delta$. Then for every $\beta$--critical point $x$, there exists a $\beta$--critical value $y$, such that the pair $(x,y)$ is a $\beta$--critical pair.
\end{pr}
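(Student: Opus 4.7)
Denote by $\xi$ the critical direction witnessing $x\in\crit\beta$, so that the sequence $a_n:=\log g(n,\xi)$ satisfies $a_n\geq n\log\beta_+$ for $n\geq 0$ and $a_n\geq n\log\beta_-$ for $n\leq 0$. The strategy is to realize the critical value $y$ and its critical direction $\varpi$ as a limit point in $\P(X)$ of $(f^{l_k}(x),M^{l_k}\xi)$ for a suitable sequence of positive integers $l_k\to\infty$. For each $k\geq 1$ I would first produce a positive integer $l_k$ such that the direction $\upsilon_k:=M^{l_k}\xi\in\Cb_{f^{l_k}(x)}$ satisfies the two-sided finite-time analogues of the critical value inequalities for $|n|\leq k$, and then pass to the limit by compactness.

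To construct $l_k$ I would mimic the scheme in the proof of Proposition \ref{pad_imply_adcrit}. The cocycle identity $g(n,M^{l}\xi)=g(n+l,\xi)/g(l,\xi)$ turns the critical value inequalities sought for $\upsilon_k$ into two-sided bounds on $(a_n)$ relative to the value $a_{l_k}$ at the pivot. Starting from the two critical point inequalities on $(a_n)$, I would apply Corollary \ref{ap_p_lemma} (Pliss) inside a widening window $[-K_k,K_k]$ and then feed the truncated sequence, extended linearly outside with slopes $\log\beta_\pm$, into Lemma \ref{num_sub_lemma} with $(\delta_+,\delta_-)$ chosen to reflect the role-reversal between critical point and critical value intrinsic to the definition $\cv\beta$ via $A^{-1}$. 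The conclusion of Lemma \ref{num_sub_lemma} then provides the pivot $l_k\in[-K_k,K_k]$; taking $K_k\to\infty$ forces $l_k\to\infty$.

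By compactness of $\P(X)$, the sequence $(f^{l_k}(x),M^{l_k}\xi)$ has a convergent subsequence with limit $(y,\varpi)$, and continuity of $g$ transfers the finite-time inequalities to the limit: for every fixed $n\geq 0$, the bounds satisfied by $\upsilon_k$ for sufficiently large $k$ persist in the limit and yield the critical value inequalities for $\varpi$. Hence $y\in\cv\beta$ with critical direction $\varpi$, and the sequence $l_k$ together with the convergences $f^{l_k}(x)\to y$, $M^{l_k}\xi\to\varpi$ exhibits $(x,y)$ as a $\beta$-critical pair. The principal difficulty is the simultaneous management of the two rates $\beta_\pm$ and of the requirement $l_k\to\infty$: the past-expansion of $\xi$ (rate $\beta_-^{-1}$) must be converted into forward expansion of $\upsilon_k$ after the shift, while the weak forward bound $\beta_+^n$ on $g(n,\xi)$ controls the past of $\upsilon_k$, and the auxiliary linear extension of $(a_n)$ must be tuned so that the conclusion of Lemma \ref{num_sub_lemma} lies strictly inside the chosen window and delivers the pivot $l_k$ with $l_k\to\infty$.
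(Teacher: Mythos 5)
Your overall architecture (find pivots $l_k$ along the forward orbit, check the critical-value inequalities for $M^{l_k}\xi$ up to time $k$, pass to a limit by compactness) matches the paper's, but there is a genuine gap at the heart of the construction: you propose to obtain the pivots by applying Pliss (Corollary \ref{ap_p_lemma}) and Lemma \ref{num_sub_lemma} directly to the two critical-point inequalities $g(n,\xi)\ge\beta_+^{n}$ ($n\ge0$) and $g(-n,\xi)\ge\beta_-^{-n}$ ($n\ge0$). This cannot work. A critical value requires genuine forward expansion $g(n,\varpi)\ge\beta_-^{-n}$ with $\beta_-^{-1}>1$, whereas the only information available on the forward orbit of $\xi$ is the lower bound by the \emph{decaying} exponential $\beta_+^{n}$. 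Pliss's lemma only degrades an average rate $\gamma_0$ to a slightly smaller uniform rate $\gamma_1<\gamma_0$ on a positive density of windows; it cannot manufacture a rate $>1$ from a hypothesis whose rate is $<1$. Concretely, if $g(n,\xi)=\beta_+^{n}$ for all $n\ge0$, then $g(n,M^{m}\xi)=\beta_+^{n}<\beta_-^{-n}$ for every $m,n\ge1$, so no forward iterate of $\xi$ satisfies the critical-value inequality at any scale, and no combinatorial rearrangement of the sequence $a_n$ can locate a pivot. (The shift by $l_k>0$ also does not let the past-expansion of $\xi$ ``become'' forward expansion of $\upsilon_k$: the window $[0,n]$ relative to the pivot corresponds to $[l_k,l_k+n]$ relative to $x$, which lies entirely in the forward orbit where only the weak bound holds.)

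What is missing is the dynamical input of Lemma \ref{lem_tec}, which is where the standing hypotheses ($b$--asymptotic dissipativity, absence of attractors, and $\beta_+>b$) enter. Its claim — proved by contradiction via the Criteria of Negative Exponent — is that for arbitrarily large $m$ and $l$ one has $g(l,M^{m}\xi)\ge\beta_0^{-l}$ for some $b<\beta_0<1$: if such expansion windows eventually failed to exist, $\omega(x)$ would support an invariant measure with negative largest exponent, contradicting partial hyperbolicity of all invariant measures. Only after this step does Pliss apply with $\gamma_0=\beta_0^{-1}>1$ to produce, for each $k$, a time $m_k$ with $g(n,M^{m_k}\xi)\ge\beta_-^{-n}$ for $0\le n\le k$; Lemma \ref{num_sub_lemma} is then used to interpolate between this forward window at $m_k$ and the backward inequality at $0$ to locate the pivot $l_k\in[0,m_k]$. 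A purely combinatorial argument of the kind you sketch would prove the proposition without any dissipativity or no-attractor hypothesis, which the counterexample above shows is impossible; that is the sign that the key lemma has been skipped. (Two smaller points: the definition of a critical pair only asks for a sequence of positive integers $l_k$, not $l_k\to\infty$, so that requirement can be dropped; and the roles of $\beta_\pm$ for $\cv\beta$ are exactly the reversal you describe, which is handled by the sign choice of $\delta_\pm$ when invoking Lemma \ref{num_sub_lemma}.)
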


\smallskip

\begin{rem}
Given a critical point $x$, the critical value  $y$ is not, a priori, uniquely defined, can be occurs that for different critical values $y$ and $y'$, makes $(x,y)$ and $(x,y')$ critical pairs.
\end{rem}

\smallskip

Only remains to proof the Proposition \ref{cr_pair}. For this, we need of the next lemma.

\smallskip

\begin{lm}\label{lem_tec}
Let $X$ be a $b$--asymptotically dissipative such that every $f$--invariant measure is partially hyperbolic.
Let $1>\beta_1,\beta_2>b$, let $x\in X$, and let $\xi_x\in\Cb_x$. If $g(n,\xi_x)\ge\beta_2\sp n$ for each $n\ge0$, then for every $k>0$ there exists $m_k$ such that
\[g(n,M\sp{m_k}\xi_x)\ge\beta_1\sp{-n}\]
for all $0\le n\le k$.
\end{lm}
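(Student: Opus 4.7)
The plan is to argue by contradiction and invoke the Criteria of Negative Exponent (Lemma \ref{cnexp}), so as to manufacture an $f$-invariant measure on $\omega(x)\subset X$ with negative largest Lyapunov exponent, contradicting the standing hypothesis that every $f$-invariant measure is partially hyperbolic. Hypothesis (ii) of that Criteria, $g(n,\xi_x)\ge\beta_2^n$, is already part of the statement; hypothesis (i) is automatic since $\omega(x)\subset X$ and $X$ is $b$-asymptotically dissipative. The whole task therefore reduces to producing hypothesis (iii): a uniform upper bound $g(n,M^m\xi_x)\le(\beta')^{-n}$ valid for all large $n$ and all large $m$, with some $b<\beta'<\beta_1$.

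I would start from the negation of the conclusion: there exists $k_0>0$ such that for every $m\ge0$ one can choose $n(m)\in[1,k_0]$ with $g(n(m),M^m\xi_x)<\beta_1^{-n(m)}$ (the index $n=0$ is never bad, since $g(0,\cdot)\equiv 1$). The decisive step is to upgrade this ``one bad window of length at most $k_0$ at every $m$'' into the uniform estimate demanded by the Criteria. Fix an arbitrary $m$, set $N_0=0$ and iteratively put $N_{i+1}=N_i+n(m+N_i)$; then $N_i\nearrow\infty$ with $N_{i+1}-N_i\le k_0$, and the cocycle identity for $g$ stated after Definition~\ref{defi:g_for_cocyles} gives by induction $g(N_i,M^m\xi_x)<\beta_1^{-N_i}$ for every $i\ge0$.

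To interpolate between consecutive $N_i$, I would use that the compactness of $X$, the continuity of $A$ and the invertibility of each $A_z$ produce a constant $c_0>0$ with $g(j,\eta)\ge c_0$ uniformly over $0\le j\le k_0$ and $\eta\in\P(X)$ (combine the uniform upper bound on $\|A_z\|$ with the uniform lower bound on $|\det A_z|$). Then for any $N$ with $N_i\le N<N_{i+1}$ the factorisation $g(N_{i+1},M^m\xi_x)=g(N_{i+1}-N,M^{m+N}\xi_x)\cdot g(N,M^m\xi_x)$ yields $g(N,M^m\xi_x)\le C\beta_1^{-N}$ with $C=c_0^{-1}\beta_1^{-k_0}$, uniformly in $m$ and $N$. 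Choosing any $\beta'\in(b,\beta_1)$, the inequality $\beta_1/\beta'>1$ absorbs $C$ for $N\ge n_0$ large enough, giving $g(N,M^m\xi_x)\le(\beta')^{-N}$ for every $m\ge0$ and $N\ge n_0$. All three hypotheses of Lemma~\ref{cnexp} now hold with parameters $(\beta_2,\beta')$, and the Criteria delivers the desired contradiction.

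The main obstacle is exactly this upgrade: the negation only supplies one bad index $n(m)$ per base point $m$, whereas the Criteria requires $g(n,M^m\xi_x)$ to be controlled by $(\beta')^{-n}$ for \emph{all} large $n$ simultaneously. The concatenation argument works only thanks to the uniform lower bound $c_0$ on $g(j,\cdot)$ for $j\le k_0$, so the real content lies in multiplying up bad windows without accumulating a loss that would destroy the geometric rate.
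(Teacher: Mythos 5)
Your argument is correct, but it reaches the contradiction by a different route than the paper. The paper first proves, by contradiction with the Criteria of Negative Exponent (Lemma \ref{cnexp}), the weaker claim that for every $n',m'$ there exist $l\ge n'$ and $m\ge m'$ with $g(l,M^m\xi_x)\ge\beta_0^{-l}$ for an intermediate $\beta_1>\beta_0>b$; it then feeds this single long expansion interval into the Pliss--type Corollary \ref{ap_p_lemma} to relocate the starting time $j$ so that $g(i,M^{m+j}\xi_x)\ge\beta_1^{-i}$ for all $0<i\le l-j>k$, and sets $m_k=m+j$. You instead negate the final conclusion outright and observe that the resulting ``one bad window of length at most $k_0$ at every base time'' can be concatenated, via the cocycle identity $g(n+m,\xi)=g(n,M^m\xi)\,g(m,\xi)$ together with the uniform positive lower bound on $g(j,\cdot)$ for $0\le j\le k_0$ (valid by compactness of $X$ and invertibility of each $A_z$), into the global estimate $g(N,M^m\xi_x)\le C\beta_1^{-N}$; absorbing $C$ into a slightly smaller $\beta'\in(b,\beta_1)$ yields exactly hypothesis (iii) of Lemma \ref{cnexp}, and the contradiction follows. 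Both proofs rest on the same engine --- the Criteria of Negative Exponent applied along the forward orbit of $\xi_x$ --- but yours dispenses with Pliss's Lemma entirely, replacing the hyperbolic--times selection by an elementary multiplicative interpolation; the price is that your proof gives no information about where $m_k$ is located, whereas the paper's version exhibits $m_k$ as a hyperbolic time inside an explicit expansion block, which is closer in spirit to how the lemma is used later. One cosmetic point: the base case of your induction should read $g(N_0,M^m\xi_x)=1=\beta_1^{-N_0}$ (equality, since $g(0,\cdot)\equiv1$), with strict inequality only from $i\ge1$ on; this changes nothing in the argument.
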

\begin{proof}Let $\beta_1>\beta_0>b$. The proof goes through the following claim.

\smallskip

\noindent
{\bf Claim.} {\it For every $n'$ and $m'$ positive, there exist $l\ge n'$ and $m\ge m'$ such that}
\[g(l,M\sp m\xi_x)\ge\beta_0\sp{-l}.\]
\begin{proof}[{\it Proof of Claim.}]
If the previous assertion not holds, we have that there exist $n', m'$ such that for all $l\ge n'$ and $m\ge m'$,
$g(l,M\sp m\xi_x)<\beta_0\sp{-l}$. Since $g(n,\xi_x)\ge\beta_2\sp n$, for each $n\ge n'$ we are in the hypothesis of the Criteria of Negative Exponents, that is a contradiction.
\end{proof}

\smallskip

From the preceding, we conclude that for each $n'\ge1$, there exist $l\ge n'$ and $m\ge1$ such that $g(l,M\sp m\xi_x)\ge \beta_0\sp{-l}$.

\smallskip

Now, we will apply the Corollary \ref{ap_p_lemma}. Let $k>0$ fixed. Let $\gamma_0=\beta_0\sp{-1}$ and $\gamma_1=\beta_1\sp{-1}$, and let $n_0$ and $\delta_0$ be the numbers given by this Corollary. We take $n'>n_0$ such that $n'\delta_0-1>k$. Since $g(l,M\sp m\xi_x)\ge \beta_0\sp{-l}$, there exists $0\le j<l$ such that $l-j\ge l\delta_0-1\ge n'\delta_0-1>k$ and that
\[g(i,M\sp{m+j}\xi_x)\ge\beta_1\sp{-i}\]for each $0<i\le l-j$. Therefore taking $m_k=m+j$, we obtain that
\[g(n,M\sp{m_k}\xi_x)\ge\beta_1\sp{-n}\]for each $0\le n\le k$, as asserted.
\end{proof}

\smallskip

\begin{proof}[{\bf Proof of Proposition \ref{cr_pair}:}]
Let $x\in\crit\beta$ with critical direction $\xi_x$. Since $\beta_+>b$ and $g(n,\xi_x)\ge\beta_+\sp n$, from Lemma \ref{lem_tec} it follows that for each positive $k$, there exists $m_k$ such that
\[g(n,M\sp{m_k}\xi_x)\ge\beta_-\sp{-n}\]
for all $0\le n\le k$.

\smallskip

On the other hand, we have that for each $n\ge0$
\[g(-n,\xi_x)\ge\beta_-\sp{-n}>1>\beta_+\sp n.\]
We define $n_0=-m_k$, $n_1=0$, $\delta_\pm=-\log(\beta_\pm)$ and
\[
a_n=\begin{cases}
\log(g(-n,\xi_x)) & n_0-k\le n<+\infty\\
(n-n_0)\delta_-+a_{n_0} & n<n_0-k
\end{cases}.
\]

\smallskip

It is not difficult to see that we are in the hypothesis of Lemma \ref{num_sub_lemma}, and hence there exists $-l_k\in[-m_k,0]$ such that $a_{\pm n-l_k}-a_{-l_k}\ge\pm n\delta_\pm$ for all $n\ge0$. From the construction of sequence $(a_n)_n$ we can conclude that $g(\mp n,M\sp{l_k}\xi_x)\ge \beta_\pm\sp{\pm n}$ for all $0\le n\le k$.

\smallskip

From compactness, and taking a subsequence if necessary, there exist $y\in X$ and $\varpi\in\Cb_y$ such that
\[f^{l_k}(x)\rightarrow y\textrm{\, \,  and \, \, }
M^{l_k}\xi\rightarrow \varpi\]
and therefore $y\in\cv\beta$.
\end{proof}

\smallskip

Now we will conclude the proof of Main Theorem.

\smallskip

\begin{proof}[{\bf Proof of Main Theorem:}]\label{crit_not_ds} {\it
If $\crit\beta\neq\emptyset$ with $\beta_+>b$, then $X$ does not have Dominated Splitting:} \newline If there exist critical point, then there exist a critical pair, so by Proposition \ref{ds_not_cp}, $X$ does not have dominated splitting.
\end{proof}

\section{Properties of the Critical Point}\label{sec:6}

In this section we explain in details a series of properties referring to critical points. Initially, we state notions, that represent equivalences with the notion of critical points.

\smallskip

\begin{defi}

We say that $x\in X$ is a {\bf $\beta$--post--critical point of order $N\in\Z\sp+$}, if there exists $n\in\Z$ with $|n|\le N$ such that $f\sp n(x)\in\crit{\beta}$.
%\item We say that $x\in X$ is a {\bf $\beta$--post--critical point}, if there exists $n\le0$ such that $f\sp n(x)\in\crit{\beta}$.
%\item We say that $x\in X$ is a {\bf $\beta$--pre--critical point}, if there exists $n\ge0$ such that $f\sp n(x)\in\crit{\beta}$.
%
%\end{enumerate}
\end{defi}

\smallskip

Note that, a post--critical point of order $0$, is a critical point.

\smallskip

In the definition above, when $n$ is negative, it is more natural to replace the word ``post--critical'' by ``pre--critical''. To avoid overloading the language, we choose the terminology post--critical given the sense that this point is an iterate (positive or negative) of a critical point.

\smallskip

Our following result, explain that really we have only post--critical points.

\smallskip

\begin{teo}\label{teo:crit-1}
If $x\in X$ be a $\beta$--critical point at the times $(n_-,n_+)$, then $x$ is a $\beta$--post--critical point of order $N=\max\{n_+,-n_-\}$.
\end{teo}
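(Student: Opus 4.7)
The plan is to take logarithms so that the multiplicative bounds on $g$ become the additive one--sided linear bounds required by Lemma \ref{num_sub_lemma}, and then read off the desired iterate from the output of that lemma.

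First I would let $\xi_x$ denote the critical direction guaranteed by the hypothesis and, using the cocycle identity $g(n+k,\xi_x)=g(n,M\sp k\xi_x)\cdot g(k,\xi_x)$, set $a_k=\log g(k,\xi_x)$ for $k\in\Z$. This yields the telescoping formula $\log g(n,M\sp k\xi_x)=a_{n+k}-a_k$ for every $n,k\in\Z$, which is the additive structure needed to feed the sequence $(a_k)_{k\in\Z}$ into Lemma \ref{num_sub_lemma}.

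Next I would set $\delta_\pm=\log\beta_\pm$; from $0<\beta_+\le\beta_-<1$ we obtain $-\infty<\delta_+\le\delta_-<0$, matching the sign and ordering condition of the lemma. After the substitutions $k=n_++n$ and $k=n_--n$, the two hypotheses $g(\pm n,M\sp{n_\pm}\xi_x)\ge\beta_\pm\sp{\pm n}$ for $n\ge 0$ translate into $a_k-a_{n_+}\ge(k-n_+)\delta_+$ for all $k\ge n_+$ and $a_k-a_{n_-}\ge(k-n_-)\delta_-$ for all $k\le n_-$, which are precisely the assumptions of Lemma \ref{num_sub_lemma} with $n_0=n_-$ and $n_1=n_+$.

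Applying Lemma \ref{num_sub_lemma} produces an index $m\in[n_-,n_+]$ satisfying $a_{\pm n+m}-a_m\ge\pm n\delta_\pm$ for every $n\ge 0$. Unwinding the logarithm and the telescoping identity, this reads $g(\pm n,M\sp m\xi_x)\ge\beta_\pm\sp{\pm n}$, so $f\sp m(x)\in\crit\beta$ with critical direction $M\sp m\xi_x$. The condition $n_-\le 0\le n_+$ combined with $m\in[n_-,n_+]$ forces $|m|\le\max\{-n_-,n_+\}=N$, hence $x$ is a $\beta$--post--critical point of order $N$. I do not expect a substantive obstacle here: once Lemma \ref{num_sub_lemma} is granted, the rest is bookkeeping, and the only care needed is keeping the sign conventions consistent across the past and future sides of the orbit.
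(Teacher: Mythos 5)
Your proof is correct and follows essentially the same route as the paper: both reduce the statement to Lemma \ref{num_sub_lemma} by passing to logarithms, taking $a_k=\log g(k,\xi_x)$ and $\delta_\pm=\log\beta_\pm$, and then unwind the cocycle identity to see that the index produced by the lemma gives an iterate $f\sp m(x)\in\crit\beta$ with $|m|\le N$. Your write-up is in fact cleaner than the paper's, which writes $a_n=g(n,\xi_x)$ where it means the logarithm.
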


\smallskip

To prove this Theorem, we need of the Lemma \ref{num_sub_lemma}.

\smallskip

\begin{proof}[{\bf Proof of Lemma \ref{num_sub_lemma}}]
The proof goes through the following claim.

\smallskip

\noindent
{\bf Claim:\, }{\it Let $(a_n)_{n\in\Z}\subset \R$ be a sequence and there exist $n_0\le0\le n_1$ and $-\infty<\delta_+\le\delta_-<0$ such that:
\begin{itemize}
\item[{\it i)}] $a_n\ge(n-n_0)\delta_-$, for all $n\le n_0$,
\item[{\it ii)}] $a_n\ge(n-n_1)\delta_+$, for all $n\ge n_1$.
\end{itemize}
Then there exists ${N}\in[n_0,n_1]$ such that $a_{\pm n+N}-a_N\ge \pm n\delta_\pm$ for all $n\ge0$.}
\begin{proof}[{\bf Proof of Claim:}]
Let $h:\R\to\R$ by the function defined by
\[
h(x)=\begin{cases}
\, \, a_n &, x=n\in\Z\\
\, \, (x-n)a_{n+1}+(1-(x-n))a_n &, n<x<n+1
\end{cases},
\]
then $h$ is a continuous polygonal function with vertices on $\Z$. Take
\[d_-=\sup\{d\in(-\infty,-n_0\delta_-]\,:\, h(x)>x\delta_-+d,\, \forall\, x\in[n_0,0]\}\]
and
\[d_+=\sup\{d\in(-\infty,0]\,:\, h(x)>x\delta_++d,\, \forall\,  x\in[0,n_1]\}.\]
It follows that the $\graph(h|_{[n_0,0]})$ is tangent to the line $L_-=\{y=x\delta_-+d_-\}$ and $\graph(h|_{[0,n_1]})$ is tangent to the line $L_+=\{y=x\delta_++d_+\}$. Since the graph of $h$ is a polygonal, the set of tangency between the graph of $h$ and the lines $L_-$ and $L_+$, that is,
\[T=\{x\le0\,:\,(x,h(x))\in L_-\}\cup\{x\ge0\,:\,(x,h(x))\in L_+\}\]
satisfies $\partial T\subset\Z$.

\smallskip

On one hand, we assume that $d_-\le d_+$. Hence if we take $N\in[n_0,0]$ the largest integer in the tangency $T$, then it is easy to see that
\[
h(x)\ge\begin{cases}
(x-n_0)\delta_-\ge x\delta_-+d_- & ,\, x\le n_0\\
x\delta_-+d_- & ,\, n_0\le x\le N\\
x\delta_-+d_-=(N+n)\delta_-+d_-\ge n\delta_++N\delta_-+d_- & ,\, N\le x=N+n\le0,\, n\ge0\\
x\delta_++d_+\ge x\delta_++d_- & ,\, 0\le x\le n_1\\
(x-n_1)\delta_+\ge x\delta_+\ge x\delta_++d_- & ,\, n_1\le x
\end{cases}.
\]
From the choice of $N\le0$ we have that $d_-=h(N)-N\delta_-$, hence
\begin{equation}\label{eq:num_lemm}h(n+N)\ge\begin{cases}
(n+N)\delta_-+d_-=n\delta_-+h(N) & ,\, n\le 0\\
(n+N)\delta_++d_-\ge n\delta_++h(N) & ,\, n\ge0
\end{cases}.\end{equation}
On the other hand, when $d_+\le d_-$, we take $N\in[0,n_1]$ the lowest integer in the tangency $T$. Also we consider the line $L_0=\{y=x\delta_-+h(N)-N\delta_-\}$ and $\delta_0=L_0(0)=h(N)-N\delta_-$. Since $\delta_+\le\delta_-<0$, we have that
\[\delta_0\le L_+(0)=d_+\le d_-\le -n_0\delta_-<0,\]
it follows that
\[
h(x)\ge\begin{cases}
(x-n_0)\delta_-\ge x\delta_-+\delta_0 & ,\, x\le n_0\\
x\delta_-+d_-\ge x\delta_-+\delta_0 & ,\, n_0\le x\le 0\\
x\delta_++d_+\ge x\delta_-+\delta_0 & ,\, 0\le x\le N\\
x\delta_++d_+ & ,\, N\le x\le n_1\\
(x-n_1)\delta_+\ge x\delta_+\ge x\delta_++d_+ & ,\, n_1\le x
\end{cases}.
\]
Since $d_+=h(N)-N\delta_+$ and from the choice of $\delta_0$ it is easy to see that we have the same inequality has in inequality (\ref{eq:num_lemm}). Since $a_{n+N}-a_N=h(n+N)-h(N)$ the lemma follows.
\end{proof}

We define $a=\min\{a_{n_0},a_{n_1}\}$. Then we have that
\begin{itemize}
\item[{\it a')}] $a_n\ge(n-n_0)\delta_-+a_{n_0}\ge (n-n_0)\delta_-+a$, for all $n\le n_0$,
\item[{\it b')}] $a_n\ge(n-n_1)\delta_++a_{n_1}\ge(n-n_1)\delta_++a$, for all $n\ge n_1$.
\end{itemize}
Therefore the sequence $(b_n)_\Z$ where $b_n=a_n-a$ satisfies hypothesis of the Claim. Since $a_{n+N}-a_N=b_{n+N}-b_N$, we prove the corollary.
\end{proof}

\smallskip

\begin{proof}[{\bf Proof of Theorem \ref{teo:crit-1}}]
From Corollary \ref{num_sub_lemma} taking $a_n=g(n,\xi_x)$ and $\delta_\pm=\log(\beta_\pm)$ there exists $|n|\leq N$ such that
\[a_{\pm k+n}-a_n\ge \pm k\delta_{\pm}\]
for all $k\ge0$. Then
\[\beta_{\pm}\sp{\pm k}\le g(k+n,M\sp n\xi_x)\cdot g(n,\xi)\sp{-1}=g(k,M\sp n\xi_x)\cdot g(n,\xi)\cdot g(n,\xi)\sp{-1}=g(k,M\sp n\xi_x),\]
that is, $f\sp n(x)$ is a $\beta$--critical point.
\end{proof}

\subsection{Critical Points versus Block of Domination}\label{subs:block}

In the seminal work of Pujals and Rodriguez Hertz (vide \cite{P-RH}), critical point are defined as a point such that $x\in H\sp-(\beta)$ and $f\sp n(x)\notin\Hc\sp-(\beta)$ for every $n\ge1$, where $0<\beta<1$. This definition is coherent with the characterization given by Lemma \ref{dom_and_constant}: {\it Let $K\subset X$ be a $f$--invariant set. If  $K\subset\Hc\sp-(\beta)$, then $K$ has dominated splitting}. Moreover, if every point of $K$ have an infinity (for the future) of iterates in $\Hc\sp-(\beta)$, then $X$ have dominated splitting. Then it is necessary, to think in an obstruction for domination, that the positive orbit of a point not is contained in $\Hc\sp-(\beta)$.

In this subsection, we relate the notions of $\beta$--critical point at the time $(n_-,n_+)$ and $\beta$--post--critical point, with the block of domination.

\begin{pr}\label{Hc_prop}For $i=1,2$, let $\beta_i>0$. Assume that $x\in H^-(\beta_1)$ with critical direction $\xi_x$ and let $l>0$. Then the following
statements occur:
\begin{itemize}
\item[{\it i)}]If $f^l(x)\in H^-(\beta_2)$ (or in
$\Hc^-(\beta_2)$), then $\xi_{f^l(x)}=M^l(\xi_x)$, that is, $M\sp l(\xi_x)$ is the critical direction for $f\sp
l(x)$.
\item[{\it ii)}]If $g(l,\xi_x)>\beta_2^{-l}$ then
$f^l(x)\notin\Hc^-(\beta_2)$.
\item[{\it iii)}]If $g(l,\xi_x)\geq\beta_2^l$ then
$f^l(x)\notin\Hc^-(\beta_2)$.
\end{itemize}
\end{pr}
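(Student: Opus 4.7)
The strategy is to exploit two ingredients already available: the uniqueness of the expansive (past-expansive) direction at each fiber established in Lemma \ref{uniq_directions}, and the multiplicative cocycle identity $g(a+b,\xi)=g(a,M^{b}(\xi))\cdot g(b,\xi)$ (noted in the remark after Definition \ref{defi:g_for_cocyles}), which in particular gives $g(-l,M^{l}(\xi_x))\cdot g(l,\xi_x)=g(0,\xi_x)=1$.

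For part \emph{(i)}, suppose $f^{l}(x)\in H^{-}(\beta_2)$ with (past) critical direction $\xi_{f^{l}(x)}$. By Lemma \ref{uniq_directions} the past-expansive direction at $f^{l}(x)$ is unique, so it suffices to show that $M^{l}(\xi_x)$ is past-expansive at $f^{l}(x)$. Using the cocycle identity with $a=-n$ and $b=l$ we get, for every $n\ge l$,
\[
g(-n,M^{l}(\xi_x))\;=\;\frac{g(l-n,\xi_x)}{g(l,\xi_x)}\;=\;\frac{g(-(n-l),\xi_x)}{g(l,\xi_x)}\;\ge\;\frac{\beta_{1}^{-(n-l)}}{g(l,\xi_x)}\;=\;C\,\beta_{1}^{-n},
\]
where $C=\beta_{1}^{l}/g(l,\xi_x)>0$ and the inequality uses $x\in H^{-}(\beta_1)$ with critical direction $\xi_x$. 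Since $\beta_1<1$, this is genuine exponential growth; Lemma \ref{uniq_directions} then forces $M^{l}(\xi_x)=\xi_{f^{l}(x)}$. The same argument works with $\Hc^{-}(\beta_2)$ in place of $H^{-}(\beta_2)$.

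For parts \emph{(ii)} and \emph{(iii)}, I argue by contraposition. Assume $f^{l}(x)\in\Hc^{-}(\beta_2)$. Since $\Hc^{-}(\beta_2)\subseteq H^{-}(\beta_2)$, part \emph{(i)} applies and the critical direction at $f^{l}(x)$ equals $M^{l}(\xi_x)$. The strict version of the defining inequality for $\Hc^{-}$, applied at $n=l$, then gives $g(-l,M^{l}(\xi_x))>\beta_{2}^{-l}$. Inverting via the cocycle identity $g(-l,M^{l}(\xi_x))=1/g(l,\xi_x)$ yields
\[
g(l,\xi_x)\;<\;\beta_{2}^{l}.
\]
This directly contradicts the hypothesis of \emph{(iii)} that $g(l,\xi_x)\ge\beta_{2}^{l}$; because $\beta_2<1$ implies $\beta_{2}^{l}<\beta_{2}^{-l}$, it also contradicts the (strictly stronger) hypothesis of \emph{(ii)}. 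Hence in both cases $f^{l}(x)\notin\Hc^{-}(\beta_2)$.

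There is no real obstacle: the argument is essentially formal once the cocycle identity and the uniqueness lemma are invoked. The only points requiring care are (a) verifying in \emph{(i)} that the constant $C$ is independent of $n$ and that $\beta_1<1$ makes $\beta_{1}^{-n}$ grow (so Lemma \ref{uniq_directions} genuinely applies), and (b) keeping track of the strict vs.\ non-strict inequalities when passing between $H^{-}$ and $\Hc^{-}$, which is precisely what distinguishes \emph{(ii)}/\emph{(iii)} from the contrapositive statement one would get by allowing $f^{l}(x)\in H^{-}(\beta_2)$ only.
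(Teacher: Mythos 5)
Your proof is correct and follows essentially the same route as the paper: parts \emph{(ii)} and \emph{(iii)} are handled identically via the identity $g(-l,M^l(\xi_x))\cdot g(l,\xi_x)=1$ after identifying the critical direction at $f^l(x)$ with $M^l(\xi_x)$, and for part \emph{(i)} the paper simply inlines the angle estimate of Lemma \ref{formula} (showing $\measuredangle(v_{f^l(x)},A^l_xv_x)=0$) instead of first verifying past-expansiveness of $M^l(\xi_x)$ and then citing Lemma \ref{uniq_directions}, which is the same underlying argument.
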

\begin{proof} Let $v_x$ and $v_{f\sp l(x)}$ be unitary vectors such that $[v_x]=\xi_x$, $[v_{f\sp l(x)}]=\xi_{f\sp l(x)}$ and $\measuredangle(v_{f^l(x)},A^l_xv_x)\leq\pi/2$ is minimal. To prove {\it (i)}, we claim that $\measuredangle(v_{f^l(x)},A^l_xv_x)=0$. In fact, if $\measuredangle(v_{f^l(x)},A^l_xv_x)>0$
then for every $n<-l$ we have that
\[g(n,M^l(\xi_x))=g(n+l,\xi_x)g(-l,M^l(\xi_x))\geq\beta_1^{n+l}g(-l,M^l(\xi_x)).\]
Defining $\beta=\max(\beta_1,\beta_2)$ we have that for any $n\le0$
\[\begin{aligned}g({-l},M^l(\xi_x))\beta^{2n+l}\leq
g({-l},M^l(\xi_x))\beta_1^{n+l}\beta_2\sp{n}&\leq g({n},M^l(\xi_{x}))g({n},\xi_{f^l(x)})\\&=\left(\frac{
\sin(\measuredangle(A^{n}_{f\sp l(x)}v_{f^l(x)} , A^{n+l}_{f\sp
l(x)}v_x))}{\sin(\measuredangle(v_{f^l(x)},A^l_xv_x))}
\right)^2\\&<\frac{1}{\sin(\measuredangle(v_{f^l(x)},A^l_xv_x))},
\end{aligned}\]
that is a contradiction.

To prove assertion {\it (ii)}, suppose that $f^l(x)\in\Hc^-(\beta_2)$. Hence it follows from {\it (i)} that for every $n\le0$, $g(n,M^l(\xi_x))>\beta_2^n$. In
particular $g(-l,M^l(\xi_x))>\beta_2^{-l}$, hence
\[\beta_2^{l}<g(-l,M^l(\xi_x))=\frac{1}{g(l,\xi_x)}<\beta_2\sp l\]
that is a contradiction.

To prove assertion {\it (iii)}, suppose
that $f^l(x)\in\Hc^-(\beta_2)$. Then arguing as in the previous assertion, we have that
\[\beta_2^{-l}<g({-l},M^l(\xi_x))=\frac{1}{g(l,\xi_x)}\leq\frac{1}{\beta_2^l}=\beta_2\sp{-l}\]
also a contradiction.
\end{proof}

Critical points at the times $(n_-,n_+)$ can be related with a version in terms of block of dominations.

\begin{teo}\label{teo:01}
If $x\in X$ is a $\beta$--critical point at the times $(n_-,n_+)$, then
\begin{itemize}
\item[{\it a)}] $f\sp{n_-}(x)\in H\sp-(\beta_-)$.
\item[{\it b)}] $f\sp{n+n_+}(x)\notin \Hc\sp-(\beta_+)$ for each $n\geq 1$.
\end{itemize}
On the other hand, if $\beta=(\beta_0,\beta_0)$ and $x$ satisfies the item (a) and (b), then $x$ is a $\beta$--post--critical point of order $n_+-n_-$.

\end{teo}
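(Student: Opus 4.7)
For part (a), the direction $\eta := M^{n_-}\xi_x$ at $y := f^{n_-}(x)$ satisfies $g(-n,\eta) \geq \beta_-^{-n}$ for all $n \geq 0$ directly from the definition of a $\beta$--critical point at times $(n_-,n_+)$, so $y \in H^-(\beta_-)$.

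For part (b), I argue by contradiction. Suppose $f^{n+n_+}(x) \in \mathring{H}^-(\beta_+)$ for some $n \geq 1$, and set $y = f^{n_-}(x)$. Since $y \in H^-(\beta_-)$ by (a) and $f^{n+n_+}(x) = f^{n+n_+-n_-}(y)$, Proposition \ref{Hc_prop}(i) forces the witness direction for $\mathring{H}^-(\beta_+)$ at $f^{n+n_+}(x)$ to coincide with $M^{n+n_+}\xi_x$. Taking $k=n$ in the defining strict inequality yields $g(-n, M^{n+n_+}\xi_x) > \beta_+^{-n}$; inverting via the cocycle identity $g(-n, M^{n+n_+}\xi_x) = g(n, M^{n_+}\xi_x)^{-1}$ gives $g(n, M^{n_+}\xi_x) < \beta_+^n$, contradicting the future side of the critical condition.

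For the converse, set $L = n_+-n_-$ and let $\eta$ be the witness direction for $y = f^{n_-}(x) \in H^-(\beta_0)$ (unique by Lemma \ref{uniq_directions}). Define $a_n := \log g(n,\eta)$ and $\psi(n) := a_n - n\log\beta_0$. The plan is to apply Lemma \ref{num_sub_lemma} to $(a_n)$ with $n_0 = 0$, a suitable $n_1 \in [0,L]$, and $\delta_\pm = \log\beta_0$. Hypothesis (a) of the theorem translates directly to $\psi(n) \geq 0$ for $n \leq 0$, which is the first hypothesis of the lemma. The crux is the equivalence
\[
f^l(y) \in \mathring{H}^-(\beta_0) \iff \psi(l) < \min_{0 \leq j < l}\psi(j),
\]
proved as follows: any witness must equal $M^l\eta$ by uniqueness of past-expansive directions; for $k \in [1,l]$, the cocycle identity $g(-k, M^l\eta) = g(l-k,\eta)/g(l,\eta)$ makes the strict inequality $g(-k, M^l\eta) > \beta_0^{-k}$ equivalent to $\psi(l) < \psi(l-k)$; for $k > l$ the past-expansion $g(l-k,\eta) \geq \beta_0^{l-k}$ from (a) makes strictness automatic provided $\psi(l) < \psi(0) = 0$.

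Hypothesis (b) therefore asserts that no $l \geq L+1$ is a strict running minimum of $\psi$ on $[0,\infty)$, so the global minimum is attained at some $n_1 \in [0,L]$. Then $\psi(n) \geq \psi(n_1)$ for all $n \geq n_1$, which is precisely the second hypothesis of Lemma \ref{num_sub_lemma}. The lemma produces $N \in [0, n_1] \subseteq [0,L]$ with $g(\pm k, M^N\eta) \geq \beta_0^{\pm k}$ for all $k \geq 0$, so $f^N(y) = f^{N+n_-}(x)$ is a $(\beta_0,\beta_0)$--critical point. Since $N+n_- \in [n_-,n_+]$ and $\max(-n_-, n_+) \leq L$, we have $|N+n_-| \leq L$, so $x$ is a $\beta$--post--critical point of order $L = n_+-n_-$. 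The main obstacle is the key equivalence displayed above; in particular the regime $k > l$ requires the non-strict past-expansion of $\eta$ to be upgraded to a strict one using the condition $\psi(l) < 0$.
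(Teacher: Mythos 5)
Your proof is correct, and while parts (a) and (b) track the paper closely, your converse takes a genuinely different route. For (a) both arguments are immediate from the definitions. For (b) the paper first places $f^{n_+}(x)$ in a block $H^-(\beta_0')$ for a tailored constant $\beta_0'$ and then cites Proposition \ref{Hc_prop}(iii); you instead apply Proposition \ref{Hc_prop}(i) from the base point $f^{n_-}(x)$ to identify the witness direction at $f^{n+n_+}(x)$ as $M^{n+n_+}\xi_x$ and invert the cocycle identity $g(-n,M^{n}(M^{n_+}\xi_x))=g(n,M^{n_+}\xi_x)^{-1}$ --- this is exactly the computation inside Proposition \ref{Hc_prop}(iii), inlined. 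The real divergence is in the converse: the paper takes $l\in[0,n_+]$ maximal with $f^l(x)\in H^-(\beta_0)$ and runs a strong induction to show $g(-n,M^{l+n}\xi_x)<\beta_0^{-n}$ for all $n\ge1$, hence $g(n,M^{l}\xi_x)\ge\beta_0^{n}$; you instead encode everything in the discrepancy $\psi(n)=\log g(n,\eta)-n\log\beta_0$, characterize membership $f^l(y)\in\Hc^-(\beta_0)$ as ``$l$ is a strict running minimum of $\psi$'' (correctly upgrading the weak past-expansion to a strict one in the regime $k>l$ via $\psi(l)<\psi(0)=0\le\psi$ on the past), deduce from (b) that the minimum of $\psi$ over $[0,\infty)$ is attained at some $n_1\in[0,L]$, and feed the resulting two-sided linear bounds into Lemma \ref{num_sub_lemma}. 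This buys two things: the same combinatorial lemma the paper uses for Theorem \ref{teo:crit-1} now also yields this converse, and your bookkeeping makes transparent exactly where the strict inequalities of $\Hc^-$ versus the weak ones of $H^-$ enter --- a point on which the paper's induction is somewhat loose, since its final contradiction is attributed to the maximality of $l$ over $[0,n_+]$ even when $l+m>n_+$, where what is really being violated is hypothesis (b). The paper's argument, in exchange, is self-contained and does not route through Lemma \ref{num_sub_lemma}.
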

\begin{proof}
If $x$ is a $\beta$--critical point at the times $(n_-,n_+)$, then there exist a direction $\xi_x$ such that for every $n\ge0$ we have that $g(\pm n,M\sp{n_\pm}\xi_x)\ge\beta_{\pm}\sp{\pm n}$. Since
\[g(-n,M\sp{n_-}\xi_x)\ge\beta_-\sp{-n}\]
for all $n\ge0$ it follows that $f\sp{n_-}(x)\in H\sp-(\beta_-)$.

On the other hand, taking
\[\beta_0=\max\{\beta_-,\sqrt[k]{g(k,M\sp{n_+-k}\xi_x)}\, :\, k=1,\ldots,n_+-n_-\}\]
we have that $f\sp{n_+}(x)\in H\sp-(\beta_0)$. Since $g(n,M\sp{n_+}\xi_x)\ge\beta_+\sp{n}$ for each $n\ge0$, it follows from Proposition \ref{Hc_prop} that $f\sp{n+n_+}(x)\notin\Hc\sp-(\beta_+)$.

Now suppose that $x$ satisfy the items {\it (a)} and ${\it (b)}$ with $\beta=(\beta_0,\beta_0)$ and denote the critical direction of $x$ by $\xi_x$. Without loss of generality we can assume that $n_-=0$. Let $0\le l\le n_+$ maximal with the property $f\sp l(x)\in H\sp-(\beta_0)$. We claim that, for every $n\ge1$,
\[g(-n,M\sp{l+n}\xi_x)<\beta_0\sp{-n}.\]
Assuming the preceding is true, we have that $g(n,M\sp{l}\xi_x)\ge\beta_0\sp n$ for every $n\ge0$ which prove the Theorem. It remains to prove our claim.

First, we suppose that $g(-1,M\sp{l+1}\xi_x)\ge\beta_0$. Then for every $k\ge1$ we have that
\[g(-k,M\sp{l+1}\xi_x)=g(-(k-1),M\sp{l}\xi_x)\cdot g(-1,M\sp{l+1}\xi_x)\ge\beta_0\sp{-(k-1)}\beta_0\sp{-1}=\beta_0\sp{-k}\]
that implies that $f\sp{l+1}(x)\in H\sp-(\beta_0)$, and this contradicts the maximality of  $l$.

Now, we assume that our claim is true for each $0<n<m$.

Finally, we suppose that $g(-m,M\sp{l+m}\xi_x)\ge\beta_0\sp{-m}$. Then for each $0<k\leq m$ we have that
\[
\begin{aligned}
\beta_0\sp{-m}\le g(-m,M\sp{l+m}\xi_x)&=g(-k,M\sp{l+m}\xi_x)\cdot g(-(m-k),M\sp{l+m-k}\xi_x)\\
&<g(-k,M\sp{l+m}\xi_x)\beta_0\sp{m-k}
\end{aligned}
\]
hence $g(-k,M\sp{l+m}\xi_x)\ge\beta_0\sp{-k}$. Similarly, if $k>m$, we have that
\[g(-k,M\sp{l+m}\xi_x)=g(-m,M\sp{l+m}\xi_x)\cdot g(-(k-m),M\sp{l}\xi_x)>\beta_0\sp{-m}\beta_0\sp{-(k-m)}=\beta_0\sp{-k},\]
that is, for all $k\ge0$ we have that $g(-k,M\sp{l+m}\xi_x)\ge \beta_0\sp{-k}$ that is a contradiction with the maximality of $l$.

\end{proof}

\smallskip
\begin{rem}
From the preceding Theorem, we have that $x$ is a $(\beta_0,\beta_0)$--critical point, if and only if $x\in H\sp-(\beta_0)$ and $f\sp n(x)\notin \Hc\sp-(\beta_0)$, that is the original definition in \cite{P-RH}.
\end{rem}
\smallskip

\subsection{The Critical Set}

In this section we explain the main properties of $\crit\beta$. These properties justify the notion of critical point, show how the notion of critical point is an intrinsic notion of the dynamics, and highlight its meaning.

\bigskip

\noindent
$\bullet$ {\bf Compactness:}\, {\it We recall that the set of critical point is a compact set}.

\smallskip

\noindent
In fact, let $(x_k)_\N\subset\crit\beta$ such that $x_k\rightarrow x$, and denote his critical directions by $\xi_k$. By passing to subsequence if necessary, there exist a direction $\xi\in\Cb_x$ such that $\xi_k\rightarrow\xi$. Since for each $n\ge0$ we have that $g(\pm n,\xi_k)\ge\beta_\pm\sp{\pm n}$, taking $k$ goes to infinity we conclude that $x\in\crit\beta$.

\bigskip

\noindent
$\bullet$ {\bf Distinguished Critical Point:}\, {\it  We assert that, if $\beta\in\Delta$ with $\beta_+>b$, then in the orbit of a critical point $x$, there exist a critical point positively maximal}.

\smallskip

\noindent
In others words, if $x\in\crit\beta$ there exist $n_0\ge0$ such that $f\sp n(f\sp{n_0}(x))\notin\crit\beta$ for each $n\ge1$ (We call the maximal element in the orbit of a critical point, by {\bf distinguished critical  point}).

In fact, suppose by contradiction that there exists $(n_k)\nearrow\infty$ such that $f\sp{n_k}(x)\in\crit\beta$. Without loss of generality, we can take $n_k\rightarrow\infty$ such that the limit
\[\mu=\lim_{k\rightarrow\infty}\frac{1}{n_k}\sum_{i=0}^{n_k-1}\delta_{
M^i(\xi_x)}\]
there exists. Denote the projection in the first variable of $\mu $ by $\mu'$.

On the other hand, since $g(-n_k,M\sp{n_k}\xi)\ge\beta_-\sp{-n_k}$ then the inequality \begin{equation}\label{eq:29}
\beta_+\sp{n_k}\le g(n_k,\xi)\le\beta_-\sp{n_k}<1<\beta_-\sp{-n_k}\end{equation} holds.

We recall that
\[\supp(\mu)=\bigcap_{k\ge1}\overline{\{(f\sp{n_s}(x),M\sp{n_s}\xi_x)\, :\, s\ge k\}}.\]
Hence, arguing as in the proof of Criteria of Negative Exponent, and since the equation (\ref{eq:29}) holds, we can conclude that for every $(z,w)$ with $x\in\mathcal{R}(A,\mu')$ the limit $I(z,w)$ defined in the equation (\ref{eq:02}) satisfy
\[\log(\beta_+)\le I(z,w)\le-\log(\beta_-).\]

Since $\beta_+,\beta_->b$ and $f$ has no attractor, then working in the same way as in Criteria of Negative Exponent we obtain a contradiction.

\bigskip

\noindent
$\bullet$ {\bf Change of Metric:}\,  Let $(\cdot|\cdot)_i$ be a hermitian metric in $TX$, where $i=0,1$. Denotes his spherical metrics related with them by $||\cdot||_i$ (see Appendix for details), and denote the set of critical points by ${\rm Crit}_i(\beta)$.
\begin{center}
{\it We claim that there exist a positive integer $N$ such that}
\[{\rm Crit}_0(\beta)\subset\bigcup_{j=-N}\sp N f\sp j({\rm Crit}_1(\beta)).\]
{\it In other words, every $\beta$--critical point to $g_0$, is a $\beta$--post--critical\\ point for $g_1$ of order $N$.}
\end{center}

In fact, let $\alpha>0$ such that $\alpha\sp{-1}||\cdot||_1\le ||\cdot||_0\le \alpha||\cdot||_1$. We recall from Definition \ref{def:g}, equation (\ref{eq:03}), and Definition \ref{defi:g_for_cocyles}, that for $\xi\in\Cb_x$, $n\in\Z$ and for every $w\in T_\xi\Cb_x$ we have that
\[g_i(n,\xi)=||(M\sp n)'(\xi)||_{i}=\frac{||(M\sp n)'(\xi)w||_{i,R\sp n(\xi)}}{||w||_{i,\xi}}.\]
Replacing the previous equation we have
\[\frac{g_1(n,\xi)}{g_0(n,\xi)}=\frac{||(M\sp n)'(\xi)w||_{1,R\sp n(\xi)}}{||(M\sp n)'(\xi)w||_{0,R\sp n(\xi)}}\cdot\frac{||w||_{0,\xi}}{||w||_{1,\xi}}\]
and hence we conclude that $\alpha\sp{-2}\le g_1/g_0\le\alpha\sp2$. It is easy to see that if $\alpha\le1$ then every $\beta$--critical point to $g_0$, is a $\beta$--critical point for $g_1$ and reciprocally. Then we may assume that $\alpha>1$.

Let $x\in {\rm Crit}_0(\beta)$. Let $\alpha_0=2\log(\alpha)$, let $\delta_\pm=\log(\beta_\pm)$ and let $a_n=\log(g_1(n,\xi))$. If we take $L_\pm(x,d)=x\delta\pm+d$ then for each $n\ge0$ we have that
\[\log(g_0(\pm n,\xi))+\alpha_0\ge\log(g_1(\pm n,\xi))\ge\log(g_0(\pm n,\xi))-\alpha_0\ge L_\pm(\pm n,-\alpha_0).\]

Take $\alpha_1=-\alpha_0\delta_+/\delta_-$. Denote the function floor and ceiling by $\lfloor\cdot\rfloor$ and $\lceil\cdot\rceil$ respectively. Since the lines $L_-(x,-\alpha_0)$ and $L_+(x,\alpha_1)$ is the point $\{(\alpha_0/\delta_-,0)\}$, then there exist
\[d_+=\sup\{d\in[\alpha_1,\alpha_0]\, :\, a_n\ge n\delta_++d,\, \, \forall\, \, n\ge\lceil-d/\delta_+\rceil\}\]
and
\[d_-=\sup\{d\in[-\alpha_0,-d_+/\delta_+]\, :\, a_n\ge n\delta_-+d,\, \, \forall\, \, n\le\lfloor-d/\delta_-\rfloor\}.\]

Define $n_+=\lceil-d_+/\delta_+\rceil$ and $n_-=\lfloor-d_-/\delta_-\rfloor$. From the choice of the constants, it is not difficult to see that
\[a_n\ge(n-n_+)\delta_+, \, \, \textrm{where } n\ge n_+\, \, \textrm{ and }\, \,a_n\ge(n-n_-)\delta_-, \, \, \textrm{where } n\le n_-.\]

It follows that we are in the hypothesis of the Claim  in the proof of Lemma \ref{num_sub_lemma}, then there exists $N'\in[n_-,n_+]$ such that $a_{\pm n+N'}-a_{N'}\ge\pm\delta_\pm$ for all $n\ge0$. This we conclude that $f\sp{N'}(x)\in{\rm Crit}_1(\beta)$. Finally, since
\[\left\lfloor\frac{\alpha_0}{\delta_-}\right\rfloor\le n_-\le n_+\le\left\lceil-\frac{\alpha_0}{\delta_+}\right\rceil\]
we conclude that taking $N=\max\left\{\left|\lfloor\frac{\alpha_0}{\delta_-}\rfloor\right|,
\left|\lceil-\frac{\alpha_0}{\delta_+}\rceil\right|\right\}$, we have our assertion.

\bigskip

$\bullet$ {\bf Conjugated Cocycles:}\, As the notion of dominated splitting is invariant by conjugation, we can expect a similar property  to the notion of critical point.
More precisely, we state:
\begin{lm}
Let $A$ and $B$ be two conjugated lineal cocycles over $TX$. Then $\beta$--critical points of $A$ becomes $\beta$--post--critical points of $B$.
\end{lm}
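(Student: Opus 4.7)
The plan is to mimic the Change of Metric argument of Subsection 6.2, replacing the bounded ratio of two hermitian norms by the bounded distortion of the conjugating cocycle. Writing the conjugation at the projective level as $H\circ M_A = M_B\circ H$ with $H=(h,H_*)$ an isomorphism of projective bundles whose fibre maps $H_z:\Cb_z\to\Cb_{h(z)}$ are Möbius transformations, the chain rule applied fibrewise yields, for every $\xi\in\Cb_z$ and every $n\in\Z$,
\[
g_B(n,H_z(\xi))=\frac{\|H_{f^n(z)}'(M_{A,z}^n(\xi))\|}{\|H_z'(\xi)\|}\cdot g_A(n,\xi),
\]
where derivatives are taken with respect to the spherical metric of the fibres. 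Because $X$ is compact and both $H$ and $H^{-1}$ are continuous, the spherical norm of $H'$ is uniformly bounded above and below on the projective bundle: there exists $c_0\geq 1$ with $c_0^{-1}\leq \|H_z'(\xi)\|\leq c_0$, so
\[
c_0^{-2}\,g_A(n,\xi)\ \le\ g_B(n,H_z(\xi))\ \le\ c_0^{2}\,g_A(n,\xi)\qquad(n\in\Z).
\]

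Now fix $x\in\crit\beta$ with critical direction $\xi_x$, set $\eta=H_x(\xi_x)\in\Cb_{h(x)}$, and put $a_n=\log g_B(n,\eta)$, $\delta_\pm=\log\beta_\pm<0$, $\alpha_0=2\log c_0\geq 0$. The defining inequalities $g_A(\pm n,\xi_x)\geq\beta_\pm^{\pm n}$ transfer, via the previous estimate, to $a_n\geq n\delta_+-\alpha_0$ for $n\geq 0$ and $a_n\geq n\delta_--\alpha_0$ for $n\leq 0$. At this point I follow the Change of Metric computation verbatim: introducing the maximal offsets $d_\pm$ exactly as there, setting $n_+=\lceil -d_+/\delta_+\rceil$ and $n_-=\lfloor -d_-/\delta_-\rfloor$, one deduces $a_n\geq (n-n_+)\delta_+$ for $n\geq n_+$ and $a_n\geq (n-n_-)\delta_-$ for $n\leq n_-$, which are exactly the hypotheses of Lemma \ref{num_sub_lemma}.

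The lemma then supplies $N\in[n_-,n_+]$ with $a_{\pm n+N}-a_N\geq \pm n\delta_\pm$ for every $n\geq 0$, that is, $g_B(\pm n,M_B^N(\eta))\geq\beta_\pm^{\pm n}$. Hence $f_B^N(h(x))=h(f^N(x))$ is a genuine $\beta$-critical point of $B$, so that $h(x)$ is a $\beta$-post-critical point of $B$ of order bounded by $\max\{|n_-|,|n_+|\}$, a constant that depends only on $c_0$ and $\beta$ and not on the particular point $x$. The main obstacle is the uniform bounded-distortion step: one must argue that $\|H_z'\|$ is controlled both above and below uniformly in $(z,\xi)$ in the spherical metric, which rests on compactness of $X$, continuity of $H$ and $H^{-1}$, and the Möbius structure of each fibre map (so that no degeneration can occur inside a fibre). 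Once this uniform bound is secured, the rest of the proof is a direct transcription of the Change of Metric technique followed by Lemma \ref{num_sub_lemma}.
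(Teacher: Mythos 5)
Your proof is correct and follows essentially the same route as the paper: the paper likewise uses the chain rule on the fibrewise M\"obius conjugacy $L$ to get a uniform two--sided bound $c^{-1}\le g_1(n,L(\xi))/g_0(n,\xi)\le c$ (with $c$ coming from $\sup\|(L^{\pm1})'\|$ over the compact projective bundle), and then invokes the Change of Metric argument together with Lemma \ref{num_sub_lemma} to relocate the critical time within a uniformly bounded window. The only cosmetic difference is that you record the chain rule as an exact identity before taking suprema, whereas the paper passes directly to the submultiplicative inequality; the substance is identical.
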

\begin{proof}
In fact, let $A=(f_0,A_*)$,\, $B=(f_1,B_*)$ and
$H=(h,H_*)$ be linear cocycles and $M$,\, $N$ and $L$, the
respective projective cocycles related with them. Assume that $H\circ A=B\circ H$. Denote the norm of the multiplier related with $A$ (resp. $B$) by $g_0$ (resp. $g_1$).

Then  we have that
\[
\begin{aligned}
g_0(n,\xi)=||(M\sp n)'(\xi)||&\le||(L\sp{-1})'(N\sp n(L(\xi)))||\cdot||L'(\xi)||\cdot||(N\sp n)'(L(\xi))||\\
&=||(L\sp{-1})'(N\sp n(L(\xi)))||\cdot||L'(\xi)||\cdot g_1(n,L(\xi)).
\end{aligned}
\]
Hence taking $c_\pm=\sup\{||(L\sp{\pm 1})'(\varpi)||\, : \, \varpi\in\P(X)\}$ and $c=c_-\cdot c_+$ we conclude that $g_0(n,\xi)\le c g_1(n,L(\xi))$. Of similar way, we can conclude that $g_1(n,\xi)\le cg_0(n,L\sp{-1}(\xi))$ and therefore $c\sp{-1}\le g_1(n,L(\xi))/g_0(n,\xi)\le c$.

Arguing as in the preceding item, we can conclude that if $x$ is a critical point for $A$, then $h(x)$ is a post--critical point of order bounded.
\end{proof}

\subsection{Proof of Theorem \ref{thm_A}}\label{proofteoblock}
\begin{proof}
Let $1>\beta>b$ and $1\le\gamma<b\sp{-1}\beta$. We can choice $\alpha$ such that $b<\alpha<\gamma\sp{-1}\beta$.
Let $\nu$ be a $f$--invariant measure and let
$x\in\mathcal{R}(A,\nu)$ (see, Subsection 2.2), then
\[\lim_{n\rightarrow+\infty}\frac{1}{n}\log\left(\phantom{\overline{A}}\hspace{-8pt}g(n,E_x)
\right)=\lambda^+(x)-\lambda^-(x)\geq-\lambda^-(x)\geq-\log(b).\]
We consider \[1>\alpha>c>b\] arbitrarily but fixes. It follows that
for $m$ large enough we have that \[g(m,E_x)\geq c^{-m}.\] From Pliss's Lemma, there exists a sequence $(m_k)_k\nearrow\infty$ satisfying
\[g(n,M^{m_k}(E_x))\geq\alpha^{-n}>\gamma\sp n\beta\sp{-n}>\gamma\beta\sp{-n}\, \, \,\textrm{for every}\, \, \, n\geq1.\]
It follows that $\gamma H^+(\beta)$ is a not empty set. Moreover, $\gamma H^+(\beta)$ contains all accumulation points of the set $(f^{m_k}(x))_k$, which critical directions is an accumulation point of the set $(M^{m_k}(E_x))_k$.

Arguing in the same way, for $x\in\mathcal{R}(A,\nu)$ we have that
\[\lim_{n\rightarrow+\infty}\frac{1}{n}\log
\left(\phantom{\overline{A}}\hspace{-8pt}g({-n},F_x)
\right)=\lambda^+(x)-\lambda^-(x)\geq\lambda^-(x)\geq-\log(b),\]
and we can find a sequence $(n_k)_k\nearrow\infty$ such that
\[g({-n},M^{-n_k}(F_x))\geq\beta\sp{-n}>\gamma\sp{-1}\beta\sp{-n}\, \, \,\textrm{for every}\, \, \, n\geq1,\]
and conclude that $H^-(\beta)$ is not empty.

To see the compactness of $\gamma\sp{-1}H\sp+(\beta)$, we take a sequence $(y_n)_n\subset H^+(\beta)$ which critical direction $(\varpi_n)_n$. If $y$ is any accumulation point of $(y_n)_n$, then (taking subsequence if necessary) there exists a direction $\varpi_y$ accumulated by the directions $(\varpi_n)_n$ that satisfy
$g(n,\varpi_x)\geq\gamma\beta\sp{-n}$, then $y\in \gamma H^+(\beta)$.

Finally, let $X^+_0=\cup_{n\in\Z}f^n(\gamma H^+(\beta))$. Note that for any regular point $x\in\mathcal{R}(A,\nu)$, there exists a forward iterate of $x$ in $\gamma H^+(\beta)$, then $\mathcal{R}(A,\nu)\subset X^+_0$ and hence, $X^+_0$ have total measure.
\end{proof}

\subsection{Dynamically Defined Cocycles}

In this subsection, we let $f$ be a biholomorphisms in a two dimensional complex manifolds (for example, a generalized H\'enon map), with a set $X$ compact and $f$--invariant. We also consider the natural cocycle related with $f$, that is $Df_\#=(f,Df)$ the cocycle defined on $TX$ by the function and his derivative. We also assume that $f$ is $b$--asymptotically dissipative and has no attractor.

We recall that in the one dimensional context, both real and complex, critical point are far from hyperbolic set, however they can accumulated by hyperbolic sets.

In the two dimensional context, a similar result holds:

\begin{cor}
Suppose that $X$ does not have dominated splitting, but there exist an $f$--invariant compact set $X'\subset X$ such that $X'$ is hyperbolic and/or has dominated splitting. Then for every $\beta$ with $\beta_+>b$, $\textrm{dist}(X',\crit\beta)>0$.
\end{cor}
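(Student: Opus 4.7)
\emph{Plan.} The plan is to imitate the proof of Proposition \ref{ds_not_cp} inside the invariant subset $X'$: assume the distance is zero, extract a point $x\in\crit\beta\cap X'$ by compactness, invoke Proposition \ref{cr_pair} to produce a critical pair $(x,y)$, and then use continuity of the dominated splitting of $X'$ to force two distinct invariant directions to coincide.

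First I would use compactness of $\crit\beta$ (Subsection 6.2) to pass from $\textrm{dist}(X',\crit\beta)=0$ to an honest point $x\in X'\cap\crit\beta$. Next I would identify the critical direction $\xi_x$ inside the invariant splitting of $X'$: if $TX'=E\oplus F$ denotes the continuous dominated splitting, with uniform rate $\beta^*\in(0,1)$ provided by Lemma \ref{dom_and_constant}, then at every $z\in X'$ the direction $[F]_z$ is backward-expansive and $[E]_z$ is forward-expansive for $M$; by Lemma \ref{uniq_directions} these are the unique such directions at $z$, and the backward growth condition $g(-n,\xi_x)\ge\beta_-^{-n}$ then forces $\xi_x=[F]_x$.

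Then I would use $\beta_+>b$ to apply Proposition \ref{cr_pair} and obtain a critical value $y\in\cv\beta$ with critical direction $\varpi_y$, together with integers $l_k\to\infty$ such that $f^{l_k}(x)\to y$ and $M^{l_k}\xi_x\to\varpi_y$. Invariance and closedness of $X'$ place $y$ in $X'$; the forward-expansive character of $\varpi_y$ (from $g(n,\varpi_y)\ge\beta_-^{-n}$, produced in the construction of Proposition \ref{cr_pair}) together with the same uniqueness argument gives $\varpi_y=[E]_y$. On the other hand, the $A$-invariance of $F$ and the continuity of the splitting over the compact set $X'$ yield
\[
M^{l_k}\xi_x=M^{l_k}[F]_x=[F]_{f^{l_k}(x)}\longrightarrow [F]_y,
\]
so $[E]_y=[F]_y$, contradicting $E_y\cap F_y=\{0\}$.

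I expect the main obstacle to be conceptual rather than technical: one must check that the uniqueness of expansive direction (Lemma \ref{uniq_directions}) and the critical-pair construction (Proposition \ref{cr_pair}) really do apply at a point $x\in X'$ even though $X$ itself carries no dominated splitting. This is immediate, since uniqueness of expansive direction is a pointwise statement in the projective fiber and the hypotheses of Proposition \ref{cr_pair} ($b$-asymptotic dissipation, absence of attractors) are properties of the ambient cocycle on $X$ that are blind to the presence of a dominated subsystem.
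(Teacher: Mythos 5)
Your argument is correct and is essentially the paper's: the paper disposes of the corollary in one line by applying the Main Theorem to $X'$ to get $X'\cap\crit\beta=\emptyset$ and then invoking compactness of both sets, and the critical-pair contradiction you run inside $X'$ (via Propositions \ref{ds_not_cp} and \ref{cr_pair}) is precisely the proof of the relevant direction of the Main Theorem, just inlined. The only point you make explicit that the paper leaves implicit is that criticality is an orbitwise notion, so a $\beta$--critical point of the ambient cocycle lying in the invariant set $X'$ is critical for the restricted cocycle as well.
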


In fact, this follows from the Main Theorem, since both $X'$ and $\crit\beta$ are compact and disjoints. Moreover, we also can state:

\begin{lm}
A critical point is not a regular point.
\end{lm}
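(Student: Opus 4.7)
The plan is to argue by contradiction. Suppose $x \in \crit\beta$ is a regular point, with Oseledets splitting $T_x = E_x \oplus F_x$ and Lyapunov exponents $\lambda^-(x) \le \lambda^+(x)$. Let $\xi_x$ be the critical direction and $v \in T_x$ a unit representative of $\xi_x$. Then $\lambda(v) := \lim_{n\to\pm\infty} \tfrac{1}{n}\log\|A^n_x v\|$ equals either $\lambda^-(x)$ (if $v \in E_x$) or $\lambda^+(x)$ (otherwise). Using $g(n,\xi_x) = |\det A^n_x|/\|A^n_x v\|^2$ and combining the two Oseledets limits, one gets
\[
L := \lim_{n\to\pm\infty} \tfrac{1}{n}\log g(n,\xi_x) = (\lambda^+(x)+\lambda^-(x)) - 2\lambda(v),
\]
and this common two-sided limit must exist for a regular point.

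Next I would extract the two inequalities on $L$ forced by $x\in\crit\beta$. From $g(n,\xi_x) \ge \beta_+^n$ for $n\ge 0$ we get $L \ge \log\beta_+$ by letting $n\to+\infty$. From $g(-n,\xi_x) \ge \beta_-^{-n}$ for $n\ge 0$, dividing $\log g(-n,\xi_x) \ge -n\log\beta_-$ by the negative integer $-n$ flips the inequality, so letting $n\to+\infty$ (i.e., the index $-n \to -\infty$) gives $L \le \log\beta_-$. Combined:
\[
\log\beta_+ \;\le\; L \;\le\; \log\beta_-.
\]

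Now I would feed in the two standing hypotheses on the cocycle: $b$-asymptotic dissipativity gives $\lambda^+(x)+\lambda^-(x) \le \log b$, and the no-attractor assumption (partial hyperbolicity of every invariant measure) gives $\lambda^-(x) < 0 \le \lambda^+(x)$. I then split into the two possible values of $\lambda(v)$:
\begin{itemize}
\item If $\lambda(v) = \lambda^+(x)$, then $L = \lambda^-(x)-\lambda^+(x) \le \log b - 2\lambda^+(x) \le \log b$. Since $\beta_+ > b$ this forces $L \le \log b < \log\beta_+$, contradicting the lower bound $L\ge\log\beta_+$.
\item If $\lambda(v) = \lambda^-(x)$, then $L = \lambda^+(x)-\lambda^-(x) \ge 0$, while $\log\beta_- < 0$ since $\beta_- < 1$. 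This contradicts the upper bound $L\le\log\beta_-$.
\end{itemize}
Either way we reach a contradiction, so $x$ cannot be regular.

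The main (only real) obstacle is the careful sign bookkeeping on the backward estimate: the bound $g(-n,\xi_x)\ge\beta_-^{-n}$ looks like it pushes $L$ \emph{up}, but dividing by a negative index turns it into an upper bound $L\le\log\beta_-$, which is what collides with the sign restrictions on $\lambda^\pm(x)$. Once that is correctly set up, the two cases fall out immediately from $\beta_+>b$ and $\beta_-<1$, respectively, and no further dynamical input is needed.
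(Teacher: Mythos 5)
Your proof is correct and follows essentially the same route as the paper: assume $x$ is regular, compute the exponential growth rate of $g(n,\xi_x)$ from the Oseledets data, and derive a contradiction with the forward bound $g(n,\xi_x)\ge\beta_+^{\,n}$ when the critical direction carries the exponent $\lambda^+$ (via dissipativity and $\beta_+>b$), and with the backward bound when it carries $\lambda^-$ (via $\lambda^+-\lambda^-\ge0>\log\beta_-$). One small imprecision: if $v$ lies in neither Oseledets subspace, the forward and backward exponents of $\|A^n_xv\|$ are $\lambda^+$ and $\lambda^-$ respectively, so the ``common two-sided limit'' $L$ you invoke need not exist; this is harmless here, since in that mixed case your case (i), which uses only the forward limit ($=\lambda^--\lambda^+\le\log b<\log\beta_+$) against the lower bound $\liminf_{n\to+\infty}\tfrac1n\log g(n,\xi_x)\ge\log\beta_+$, already produces the contradiction.
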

\begin{proof}
Let $x$ be a $\beta$--critical point with critical direction $\xi$ and $\beta_-\ge\beta_+>b$. We assume that $x$ is a regular. Let $T_x=E\sp+\oplus E\sp-$  a splitting related with the Lyapunov exponents. We also have the inequalities
\[\lambda\sp-\leq\log(b)<0\leq\lambda\sp+,\]
and
\[\lambda\sp--\lambda\sp+\leq\log(b).\]
We assert that the direction related with $\xi$ is the subspace $E\sp+$. In fact, if not, we have
\[\lim_{n\rightarrow-\infty}\frac{1}{n}\log||Df_z\sp
n\upsilon_\xi||=-\lambda\sp-,\]
where $\upsilon_\xi$ is unitary and define the direction
$\xi$. From the previous equation and since that $g(-n,\xi)\geq\beta_-\sp{-n}$ for $n\geq0$, we have that
\[\lim_{n\rightarrow-\infty}\frac{1}{n}\log(g(
n,\xi))=\lambda\sp--\lambda\sp+\geq-\log(\beta_-),\]
and this implies that $b\ge\beta_-\sp{-1}$ that is a contradiction. Now, as $E\sp+$ define the direction $\xi$, we have that
\[\lim_{n\rightarrow\infty}\frac{1}{n}\log(g(n,\xi))=\lambda\sp--\lambda\sp+.\]
Since  $g(n,\xi)\geq\beta_+\sp{n}$ we conclude hat
\[\log(b)\geq\log(\beta_+),\]
that is a contradiction.
\end{proof}

% % % % % % %

Moreover, critical point are disjoint to {\it ``hyperbolic blocks''}. For example, let $f$ be a H\'enon map. We denote by $\reg\subset J\sp*$ the set of all regular points. Given $C>0$ fixed, consider the set
\[
\begin{aligned}
B(0,C)=\bpl z\in\reg\, : \, & |Df\sp n|E\sp-(z)|\leq
C\exp(n\lambda\sp-(z))\\ 
& \textrm{ and }
|Df\sp{-n}|E\sp+(z)|\leq C\exp(-n\lambda\sp+(z))\bpr.
\end{aligned}
\]
Clearly, this set is closed, and given $l\in\N$ we define the {\it
hyperbolic block} of \linebreak 
large $l$
\[B(l,C)=\cup_{k=-l}\sp{l}f\sp{k}(B(0,C)).\]

Since all point in the hyperbolic block is regular, it is follows from the previous lemma, that a critical point is disjoint to the blocks.

\bigskip

\noindent
$\bullet$ {\bf Tangencies of a Periodic Point Contain  a Critical Point:}\,
Let $f$ be a H\'enon map with $b=|\det(Df)|<1$. Let $p$ be a periodic point of $f$ and $q$ be a point of tangencie between the stable and the unstable manifolds of $p$.
{\it We assert that in the orbit of $\mathcal{O}(p)$ there exist a $\beta$--critical point, when $\beta\in\Delta$ and $b_+> b$}.

In fact, without loss of generality, we can assume that $p$ is a fixed point. We denote the local stable/unstable manifold of $p$ of size $\varepsilon$ by $\www{s}{p}{\varepsilon}$ and $\www{u}{p}{\varepsilon}$ respectively. Let $\lambda\sp s$ and $\lambda\sp u $ the eigenvalues of $Df$ in $p$, then
$b=|\lambda\sp s|\cdot|\lambda\sp u|$.

\smallskip

Note that for each $n\ge0$
\[g({-n},E_p\sp u)=\frac{b\sp{-n}}{|Df\sp{-n}|E_p\sp
u|\sp2}=\left(\frac{|\lambda\sp u|\sp2}{b}\right)\sp n>b\sp{-n}>\beta_-\sp{-n},\]
and that
\[g({n},E_p\sp s)=\frac{b\sp{n}}{|Df\sp{n}|E_p\sp
s|\sp2}=\frac{|\lambda\sp u|\sp n}{|\lambda\sp
s|\sp n}=\left(\frac{|\lambda\sp
u|\sp2}{b}\right)\sp n>b\sp{-n}>1>\beta_+\sp n.\]

We can take $\varepsilon>0$ is small enough such that for every $z\in\www{s}{\varepsilon}{p}$ (resp. $z\in\www{u}{\varepsilon}{p}$) we have that $z\approx p$ and $T_z\www{s}{\varepsilon}{p}\approx E\sp s_p$ (resp. $T_z\www{u}{\varepsilon}{p}\approx E\sp u_p$). We can conclude that for each $z\in\www{u}{\varepsilon}{p}$ (resp. $z\in\www{s}{\varepsilon}{p}$) we have that $g(-n,T_z\www{u}{\varepsilon}{p})\ge \beta_-\sp{-n}$ (resp. $g(n,T_z\www{s}{\varepsilon}{p})\ge \beta_+\sp{n}$) for each $n\ge0$.

Finally, let $q_u$ be the first iterate to the past of $q$ that is inside of $\www{u}{\varepsilon}{p}$ and let $n_+>0$ such that $f\sp{n_+}(q_u)$ is the first iterate to the future of $q$ that is inside of $\www{s}{\varepsilon}{p}$. Since, $q$ is a tangencie point we have that  $Df\sp{n_+}(T_{q_u}\www{u}{\varepsilon}{p})=
T_{f\sp{n_+}(q_u)}\www{s}{\varepsilon}{p}$, hence we conclude that $q_u$ is a $\beta$--critical point at the times $(0,n_+)$. From Theorem \ref{teo:crit-1} we conclude that there exist $0\le n_0\le n_+$ such that $f\sp{n_0}(q_u)$ is a $\beta$--critical point, so we have our assertion.

\subsection{Some Remark on Critical points for H\'enon maps}\label{p_hol_context}

Recall that a polynomial or a rational maps in $\C$, always have critical points. Moreover, the critical points determine the global dynamics of a polynomial. Whit this in mind, we can state: {\it Let $p$ by a polynomial over $\C$ of degree at least two. Then the Julia set $J_p\subset\C$ is hyperbolic, if and only if, $PC(p)\cap J_p=\emptyset$}. Here $PC(p)$ denote the post--critical set defined by
\[PC(p)=\overline{\cup_{n\geq1}p\sp n\left(\bpl z\, :
p'(z)=0\bpr\right)}.\]

Following these result, we wonder if for a complex H\'enon maps, always there exists critical point (even outside of the Julia). Moreover, if these ``critical points'' there exist, we wonder if they determine the global dynamics. In fact, recall that we have proved: {\it The intersection $\textrm{Crit}\cap J=\emptyset$, if and only if $J$ has dominated splitting, where $\textrm{Crit}$ denote the critical set.}

If we think in the words ``critical point'' as the object that represent the obstruction to have dominated splitting (independent of the adopted definition), we can formulate:

\bigskip

\noindent
{\bf Question A:}\label{que09}\, {\it Do always exists critical point in $\Ctwo$?}

\bigskip

\noindent
{\bf Question B:}\label{que10}\, {\it If $K\sp+$ has interior, always exists critical point in $K\sp+$ ?}

\bigskip

We can answer positively the Question B, for a polynomial
automorphisms close to the one dimensional polynomial $p$. Let
\[f_\delta(x,y)=(y,p(y)-\delta x),\]
with $|\delta|$ small. When we refer to critical point of $p$, i.e., $p'(x)=0$ we denote them as one dimensional critical point.

Let us assume that the polynomial $p$ satisfies:
\begin{enumerate}
\item there are not one dimensional critical point in $J_p$,
\item $J_p$ is connected,
\item the filled Julia set $K_p$ has interior.
\end{enumerate}

The item 3, implies that the set $K\sp+_p$ associated with the two dimensional map $f_0:(x,y)\mapsto(y,p(y))$, has non empty interior. In fact, is easy to see that $K\sp+_p=\C\times K_p$. We recall
that since $|\delta|$ small, $f=f_\delta$ is close to $f_0$, hence
$J_f$ is close to the set $J_0=\bpl (y,p(y))\, :\ , y\in J_p \bpr$.

\begin{pr}
Under the previous hypothesis, there are a critical point in the
interior of $K\sp+$.
\end{pr}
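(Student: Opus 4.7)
The plan is to construct an explicit $\beta$-critical point inside $\textrm{int}(K^+)$ by lifting an attracting periodic orbit of $p$ to a periodic orbit of $f_\delta$, and checking that the weak eigendirection at that orbit satisfies the $g$-inequalities. Hypotheses (1) and (3) force $p$ to be hyperbolic with at least one bounded Fatou component, and by Sullivan's no-wandering-domain theorem together with the classification of Fatou components for hyperbolic polynomials (no parabolic/Siegel/Herman components), every bounded Fatou component is an attracting basin, each containing a critical point of $p$. Fix an attracting periodic point $\alpha$ of $p$ in $\textrm{int}(K_p)$, and by passing to an iterate we may assume $p(\alpha)=\alpha$.

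For $|\delta|$ small, the implicit function theorem applied to the equation $p(y)=(1+\delta)y$ produces a fixed point $\alpha_\delta=(y_\delta,y_\delta)$ of $f_\delta$ with $y_\delta$ near $\alpha$; the eigenvalues of
\[ Df_\delta(\alpha_\delta)=\begin{pmatrix} 0 & 1 \\ -\delta & p'(y_\delta) \end{pmatrix} \]
are $\mu_1\approx p'(\alpha)$ and $\mu_2\approx \delta/p'(\alpha)$, both of modulus strictly less than $1$. Hence $\alpha_\delta$ is attracting and, because attracting basins are open, $\alpha_\delta\in\textrm{int}(K^+)$. Assume $p'(\alpha)\neq 0$ (the generic, non-super-attracting case). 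Then $|\mu_1|\neq|\mu_2|$, and on the one-point compact invariant set $X=\{\alpha_\delta\}$ evaluating the projective cocycle at the weak eigenvector $v_1$ of eigenvalue $\mu_1$ gives
\[ g(n,v_1)=\left|\tfrac{\mu_2}{\mu_1}\right|^n,\qquad g(-n,v_1)=\left|\tfrac{\mu_1}{\mu_2}\right|^n. \]
Set $\rho=|\mu_2/\mu_1|\approx|\delta|/|p'(\alpha)|^2$. The condition $|p'(\alpha)|<1$ gives $\rho>b=|\mu_1\mu_2|$, and $\rho<1$ since $|\delta|<|p'(\alpha)|^2$. Choosing any $\beta=(\beta_-,\beta_+)$ with $\beta_+\in(b,\rho]$ and $\beta_-\in[\rho,1)$ (for instance $\beta_\pm=\rho$) makes the inequalities $g(\pm n,v_1)\geq\beta_\pm^{\pm n}$ hold for all $n\geq 0$; thus $\alpha_\delta$ is a $\beta$-critical point of $f_\delta$ in $\textrm{int}(K^+)$ with $\beta_+>b$.

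The main obstacle is the super-attracting case $p'(\alpha)=0$ (e.g.\ $p(z)=z^2$ with $\alpha=0$): then $Df_\delta(\alpha_\delta)$ has eigenvalues $\pm i\sqrt{\delta}$ of equal modulus, $Df_\delta^2(\alpha_\delta)=-\delta I$ is scalar, and a direct computation yields $g(\pm 2k,\xi)=1$ for every direction $\xi$, which is incompatible with the backward inequality $g(-2k,\xi)\geq\beta_-^{-2k}>1$; so no direction at $\alpha_\delta$ itself is critical. To treat this regime one must exploit the 2D critical line $\{y=c\}$ through a 1D critical point $c\in\textrm{int}(K_p)$ of $p$, along which $Df_\delta$ has the degenerate form $\left(\begin{smallmatrix} 0 & 1 \\ -\delta & 0\end{smallmatrix}\right)$, and apply a limit argument in the style of Proposition~\ref{pad_imply_adcrit}: for each $k$ produce a point $z_k\in\textrm{int}(K^+)$ and a direction $\xi_k\in\Cb_{z_k}$ verifying the critical inequalities for $|n|\leq k$, then extract a subsequential limit $(z_\infty,\xi_\infty)$ in the compact closure of a forward-invariant neighborhood of $(c,c)$ inside $K^+$.
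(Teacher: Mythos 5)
You have a genuine gap at the very first step: hypotheses (1)--(3) do not force $p$ to have an attracting cycle. The parabolic polynomial $p(z)=z^{2}+1/4$ has its critical point $0$ in the parabolic basin (hence not in $J_p$), $J_p$ is connected, and $K_p$ has nonempty interior; yet $p$ has no attracting periodic orbit, so there is no point $\alpha$ to feed into your implicit function theorem. The inference ``(1) and (3) force $p$ to be hyperbolic'' is false: absence of critical points \emph{on} $J_p$ is much weaker than hyperbolicity, which requires every critical orbit to converge to an attracting cycle. Moreover, even in the hyperbolic case you concede that the superattracting situation (e.g.\ $p(z)=z^{2}$, which satisfies all three hypotheses) defeats the fixed-point computation, and the ``limit argument'' you propose there is only a sketch with real obstructions: in the basin of the corresponding fixed point of $f_\delta$ both Lyapunov exponents equal $\tfrac{1}{2}\log|\delta|$, so $g(-n,\xi)$ grows subexponentially along any full orbit that stays in the basin, which is exactly what the required backward inequality $g(-n,\xi)\ge\beta_-^{-n}$ forbids; you never say where the points $z_k$ come from, and a subsequential limit of points whose backward orbits must exit the basin to accumulate expansion has no reason to land in $\textrm{int}(K^{+})$ rather than on $J=\partial K^{+}$.

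There is also a conceptual mismatch worth flagging. In the generic case your candidate is a sink and your ``critical direction'' is its weak stable eigendirection. Any sink with eigenvalues of distinct moduli satisfies the literal inequalities of the definition of $\beta$--critical point with $\beta_\pm=|\mu_2/\mu_1|$, while the one-point set $\{\alpha_\delta\}$ trivially \emph{has} dominated splitting; this only illustrates that the definition loses its meaning exactly where the standing hypothesis ``$f$ has no attractors'' fails, so such a point does not witness the obstruction to domination on $K^{+}$ that the proposition is about. The paper argues quite differently and uniformly over all cases: assuming no critical points, $K^{+}$ carries a dominated splitting and hence a holomorphic stable foliation; projecting $f$ along that foliation onto a transversal disc $D$ near the $y$--axis yields a holomorphic self-map of $D$ of degree $\deg p\ge 2$, which must have a critical point $c$, forcing $Df(T_cD)\subset E^{s}$ and contradicting the transversality $T_cD\pitchfork E^{s}$. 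That route needs no attracting cycle and handles the parabolic and superattracting polynomials without case distinctions.
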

\begin{proof}
If there are not critical points in $K\sp+$, then this has dominated
splitting, so $K\sp+$ is foliated by holomorphic stable leaves
\[K\sp+=\sup_{x\in K\sp+}\ww{s}{x}.\]

On the other hand, the map
\[z=(x,y)\hookrightarrow (y,p(y))\hookrightarrow p(y),\]
is holomorphic and the image of $K\sp+_p$ is $K_p$, that is contained
in the $y$-axis. So, for $|\delta|$ small enough, there exists a
holomorphic disc $D$, close to the $y$-axis and transversal to the
stable foliation of $f$ in $K\sp+$.

We define $\pi\sp s$ the projection to $D$, by the stable
foliation. Now we define
\[z\in D\cap K\sp+\hookrightarrow f(z)\hookrightarrow \pi\sp
s(f(z))\in D.\]
Then the map $(\pi\sp s\circ f):D\rightarrow D$ is a
holomorphic one dimensional map. We denote by $pr_2$ the projection in
the second variable. Since $\pi\sp s$ close to $pr_2$ nearby the
Julia set, and $f$ close to $f_0$, then $\pi\sp s\circ f$ is close to
$p$ nearby the Julia set, thus it is follows that $\pi\sp s\circ f$
has degree equal to degree of polynomial $p$.

From the previous observation, there exist $c\in D$ such that $(\pi\sp
s\circ f)'(c)=0$. Now, $Df$ does not have kernel, it is follows
that \[(Df)(T_c D)\subset\, \textrm{Kernel}(D\pi\sp s).\]
Since that $\textrm{Kernel}(D\pi\sp s)=E\sp s$, we conclude that
$(Df)(T_c D)\subset E\sp s$ but this is a contradiction, because
$T_cD {\pitchfork} E\sp s$.
\end{proof}

Therefore, one of the delicate step, is to give a formal definition of critical point in $K\sp+$ in a general context. All of this, in the direction of a possible state of the following:

{\bf Conjecture:}{\it \, If the Julia set there are not $\beta$--critical point, and if the ``critical points'' of the interior of $K\sp+$ not accumulate on $J$ (thinking as in the one--dimensional post--critical point), then $J$ is hyperbolic.}

\appendix
\section{Appendix}

\subsection{Hermitian and Spherical metrics}\label{s:herm_vs_spherical}

This section is devoted to proving the existence of a spherical metric in the Riemann sphere, given an hermitian metric. For this purpose, it is suffices to make this construction in $\Ctwo$.

%Let $TX\odot TX$ be the subset of $TX\times TX$ consisting of pairs $(u,v)$ such that $u$ and $v$ are in the same fiber. An {\it hermitian metric} on $TX$ is a continuous function $(\cdot|\cdot):TX\odot
%TX\rightarrow\C$ such that $(\cdot|\cdot)|T_z\times
%T_z=(\cdot|\cdot)_z$ is an hermitian product in $T_z$. Since $X$ is compact, there exists an hermitian metric on $TX$ (cf. \cite{H}). In what follows, we denote $||v||_z=(v|v)_z$.

%\begin{defi}
%The spherical metric in the projective
%bundle $\P(X)$, is the metric induced by hermitian metric in $TX$.
%\end{defi}
%
%To explain in detail this definition, we consider $(U_i,\varphi_i)$ with $i=1,\ldots,n$ an atlas of the bundle $TX$
%
%\begin{lm}
%The spherical metric on $\P(X)$ varies continuously whit $z\in X$ in the projective bundle.
%\end{lm}

%We observe basic facts of the spherical metric defined in $\Cb$, to explain in more details the definition of the spherical metric and its continuous dependence.

The Riemann sphere is the projective space consisting of all 1--dimensional subspaces of $\C^2$ or complex lines. The complex line that through point $v$ is the set $[v]=\{\lambda v\,:\,\lambda\in\C\sp*\}$. Writing the point $[v]$ in homogeneous coordinates this has the form $[z_1:z_2]$ where $v=(v_1,v_2)$. So we obtain that
\[\Cb=\bpl[z_1:z_2]\, :\, (z_1,z_2)\in\C^2\bpr.\]

Each $z\in\C$ is related with $[z_1:z_2]$ if and only if $[z_1:z_2]=[z_1/z_2:1]=[z:1]$. The point at infinity is related with the class $[1:0]$, and we can write $\Cb=\C\cup\{\infty\}$. In this coordinates the {\bf standard spherical metric}
\begin{equation}\label{sph-met}
d\rho=\frac{2|dz|}{1+|z|^2}
\end{equation}
and has constant Gaussian curvature +1.

The previous construction was made under the representation in homogenous coordinates in the canonical base. Now, we will repeat this construction, but considering an arbitrary base, and we will find the relationship between this different representations.

Let $\beta=\{v_1,v_2\}$ be a base of $\Ctwo$ and write $v=w_1v_1+w_2v_2=(w_1,w_2)_\beta$. We write the homogeneous coordinate in the base $\beta$ of the vector $v$ as $[w_1:w_2]_\beta$. Also we relate each $[w_1:w_2]_\beta$ with the point $w\in\C$ if and only if $w=w_1/w_2$. Finally, we denote
\[\Cb_\beta=\bpl[w_1:w_2]_\beta\, :\, w_1v_1+w_2v_2\in\C^2\bpr,\]
and define the {\bf spherical metric in the base $\beta$ on $\Cb_\beta$} by the equation
\[d\rho_\beta=\frac{2|dw|}{1+|w|^2}.\]

% M${\ddot{\rm o}}$bius

On the other hand, let $L$ the linear transformation satisfying $Lv_i=e_i$ whit $i=1,2$ and where $\{e_1,e_2\}$ denote the canonical base. It is not difficult to see that, denoting the M${\ddot{\rm o}}$bius transformation related with $L$ by $N$, then we have that $N(z)=w$.

Let $v_z=(z_1,z_2)$ such that $z=z_1/z_2$. From equation (\ref{sph-met}) it not difficult to see that
\[d\rho=2|dz|\frac{|(v_z|e_2)|\sp2}{(v_z|v_z)}\]where $(\cdot|\cdot)$ denote the standard hermitian metric. Similarly, if $(\cdot|\cdot)_0$ is a hermitian metric in $\Ctwo$ such that $\beta$ is a orthonormal bases, then
\begin{equation}\label{LA-eq}
d\rho_\beta=2|dw|\frac{|(v_w|v_2)_0|\sp2}{(v_w|v_w)_0}
\end{equation}
where $v_w=(w_1,w_2)_\beta$ such that $w=w_1/w_2$.

To justify that the definition given in equation (\ref{LA-eq}) is a good definition, it is necessary to prove that:

\begin{center}
{\bf \textrm{($\dag$)}\, \hspace{3mm}If $\beta$ is a orthonormal base (different of the canonical base)\\ \, \hspace{3mm}in the standard hermitian metric,\\ \, \hspace{3mm}then $d\rho=d\rho_\beta$.}
\end{center}

In fact, if $\beta$ is a orthonormal base standard hermitian metric, then $L$ is an isometry in the hermitian metric and the induced M${\ddot{\rm o}}$bius transformation $N$ is an isometry in the standard spherical metric. Write
\[L=\left(\begin{matrix}
\overline{b}&-a\\
\overline{a}&b\end{matrix}\right)\]
and
\[N(z)=\frac{\overline{b}z-a}{\overline{a}z+b}.\]
Let $v_z=(z_1,z_2)$ with $z=z_1/z_2$. Since $N(z)=w$, then we can take $v_w=(w_1,w_2)_\beta=w_1v_1+w_2v_2=(\overline{b}z-a,\overline{a}z+b)_\beta$.

Using the notation above and the equation (\ref{LA-eq}) we conclude that
\[
\begin{aligned}
d\rho=2|dz|\frac{|(v_z|e_2)|\sp2}{(v_z|v_z)}&
=2|dz|\frac{|z_2|\sp2}{|z_1|\sp2+|z_2|\sp2}\\
&=2|dz|\frac{|z_2|\sp2}{(Lv_z|Lv_z)}\\
&=2|dz|\frac{|z_2|\sp2}{|\overline{b}z_1-az_2|\sp2+
|\overline{a}z_1+bz_2|\sp2}\\
&=2|dz|\frac{1}{|\overline{b}z-a|\sp2+
|\overline{a}z+b|\sp2}\\
&=\frac{2|dz|}{1+\left|\displaystyle\frac{\overline{b}z-a}{\overline{a}z+b}\right|\sp2}\cdot
\left|\frac{1}{(\overline{a}z+b)\sp2}\right|\\
&=\frac{2|dz|}{1+|N(z)|\sp2}\cdot
\left|N\sp\prime(z)\right|\\
&=2\frac{|dw|}{1+|w|\sp2},
\end{aligned}
\]
and note that
\[\frac{|(v_w|v_2)|\sp2}{(v_w|v_w)}=\frac{|w_2|\sp2}{|w_1|\sp2+|w_2|\sp2}
=\frac{1}{1+|w_1/w_2|\sp2}=\frac{1}{1+|w|\sp2}.\]
Hence $(\dag)$ holds.

Finally, equation (\ref{LA-eq}) allows to define {\bf the spherical metric} as an intrinsic object of the hermitian metric (prefixing an orthonormal base, but not depending of this base). With this, we can justify the existence of a spherical metric in a projective bundle in terms of the hermitian metric defined in the fibre bundle.

%%%%%%%%%%%%%%%%%%%%%%%%%%%%%%%%%%%%%%%%%%%%%%%%%%%%%%%%%%%%
%
%Moreover, let $A:\Ctwo\to\Ctwo$ and let $M$ the M${\ddot{\rm o}}$bius transformation related with $A$ and write
%\[M(z)=\frac{az+b}{cz+d}\]
%and
%\[M(w)=\frac{a\sp\prime w+b\sp\prime}{c\sp\prime w+d\sp\prime}\]
%then
%\[\left(\begin{matrix}
%a\sp\prime&b\sp\prime\\
%c\sp\prime&d\sp\prime\end{matrix}\right)=L\left(\begin{matrix}
%a&b\\
%c&d\end{matrix}\right).\]

\bigskip

%
%Then there exist $\lambda>b\sp{-1}$ such that $g\sp{-n}(E_p\sp
%u)>\lambda\sp n$ and $g\sp{n}(E_p\sp s)>\lambda\sp n$. We conclude
%that for every $0<\delta<1-b$, $p\in \Hc\sp-(\delta)$ and $p\in
%\Hc\sp+(\delta)$. Now we fix $\delta$. Since the previous inequalitys
%open properties, we can find $\e>0$ small such that if
%$z\in\www{u}{\e}{p}$, then $z\in\Hc\sp-(\delta)$.
%
%On the other hand, provided that $\e$ small enough, it is follows
%that for any tangency point $z\in\www{u}{\e}{p}$, we have that
%$z\notin \Hc\sp+(\delta)$. Otherwise, we denote by $F_z$ the tangent
%direction to $\www{u}{\e}{p}$ in $z$ that in fact is the critical
%direction. Since that if $z\in \Hc\sp+(\delta)$, we conclude that
%there exist another direction $E_z$ transversal to $F_z$ such that
%$Df_\#\sp n(E_z)\rightarrow E\sp u_p$ has $n\rightarrow \infty$.
%Here $Df_\#$ denote the projective cocycle induced by $Df$.
%
%Now we have that
%\[g\sp n(E\sp u_p)<\left(\frac{1}{(1+\delta)}\right)\sp n\]for
%$n$ great enough, it is follows that $g\sp n(E_z)$ contracts for the
%future, that is a contradiction.


\begin{thebibliography}{XXX}

\bibitem{bls1} E. Bedford; M. Lyubich; J. Smillie.
  {\it Polynomial diffeomorphisms of $C\sp 2$. IV. The measure of   maximal entropy and laminar currents.} Invent. Math.  112 (1993),  no. 1, 77--125.

%  \bibitem{bls2} E. Bedford; M. Lyubich; J. Smillie.
%  {\it  Distribution of periodic points of polynomial
%  diffeomorphisms of $\C\sp 2$.}
%  Invent. Math.  114  (1993),  no. 2, 277--288.

\bibitem{bs1} E. Bedford; J. Smillie. {\it Polynomial
  diffeomorphisms of $C\sp 2$: currents, equilibrium  measure and hyperbolicity.} Invent. Math.  103  (1991),  no. 1, 69--99.

\bibitem{bs2} E. Bedford; J. Smillie. {\it Polynomial
  diffeomorphisms of $C\sp 2$. II. Stable manifolds and
  recurrence.} J. Amer. Math. Soc.  4  (1991),  no. 4, 657--679.

\bibitem{bs3} E. Bedford; J. Smillie. {\it Polynomial
  diffeomorphisms of $\C\sp 2$. III. Ergodicity, exponents and   entropy of the equilibrium measure.} Math. Ann.  294  (1992),   no. 3, 395--420.
\bibitem{B-C} M. Benedicks; L. Carleson. {\it The dynamics of the H\'enon map.} Annals of Math., 133 (1991), 73--169.

%  \bibitem{bdv} C. Bonatti; L. Díaz; M. Viana. {\it Dynamics
%  beyond uniform hyperbolicity. A global geometric and
%  probabilistic perspective.} Encyclopaedia of Mathematical Sciences,
%  102. Mathematical Physics, III. Springer-Verlag, Berlin,  2005.
%  xviii+384 pp. ISBN: 3-540-22066-6.
%
%  \bibitem{betal} C. Bonatti; S. Gan; D. Yang. {\it On the
%  Hyperbolicity of Homoclinic Classes.} Preprint.
%
%  \bibitem{brolin} H. Brolin. {\it Invariant sets under
%  iteration of rational functions.} Ark. Mat.  6  1965 103--144
%  (1965).
%
%  \bibitem{bu} G. Buzzard. {\it Infinitely many periodic attractors
%  for holomorphic maps of $2$ variables.} Ann. of Math. (2)  145
%  (1997),  no. 2, 389--417.
%
%  \bibitem{cerveau} D. Cerveau; \'E. Ghys; N. Sibony; J-C. Yoccoz.
%  {\it  Complex dynamics and geometry.} With the collaboration of
%  Marguerite Flexor. Papers from the Meeting ``State of the Art of
%  the
%  Research of the Société Mathématique de France'' held at the École
%  Normale Supérieure de Lyon, Lyon, January 1997. Translated from the
%  French by Leslie Kay. SMF/AMS Texts and Monographs, 10. American
%  Mathematical Society, Providence, RI; Société  Mathématique de
%  France,
%  Paris,  2003. viii+197 pp. ISBN: 0-8218-3228-X.
\bibitem{conw} J. Conway. {\it Functions of one complex variable. Second edition.} Graduate Texts in Mathematics, 11. Springer--Verlag, New York--Berlin, 1978. xiii+317 pp.

\bibitem{C} S. Crovisier. {\it Sur la notion de Criticalit\'e de Pujals-Rodriguez Hertz.} Preprint.

\bibitem{F} J. E. Forn\ae{}ss. {\it The Julia set of H\'enon maps.} Math. Ann.  334  (2006),  no. 2, 457--464.

\bibitem{fs1} J. E. Forn\ae{}ss; N. Sibony. {\it Complex H\'enon mappings in $C\sp 2$ and Fatou-Bieberbach domains.} Duke Math. J.  65  (1992),  no. 2, 345--380.

\bibitem{fm} S. Friedland; J. Milnor. {\it Dynamical
  properties of plane polynomial automorphisms.} Ergodic Theory
  Dynam. Systems  9  (1989),  no. 1, 67--99.

%  \bibitem{G} M. Gromov. {\it On the entropy of holomorphic
%  maps.}  Enseign. Math. (2)  49  (2003),  no. 3-4, 217--235.

\bibitem{h} Hubbard, John H.  {\it The H\'enon mapping in the complex domain}.  Chaotic dynamics and fractals (Atlanta, Ga., 1985), 101--111, Notes Rep. Math. Sci. Engrg., 2, Academic Press, Orlando, FL,  1986.


\bibitem{H} D. Husemoller. {\it Fibre bundles.} Third edition. Graduate Texts in Mathematics, 20. Springer-Verlag, New York, 1994. xx+353 pp.

\bibitem{ho1} J. Hubbard; R. Oberste-Vorth. {\it  H\'enon
  mappings in the complex domain. I. The global topology of dynamical space.}  Inst. Hautes Études Sci. Publ. Math.  No.79  (1994), 5--46.

\bibitem{ho2}  Hubbard, John H. ;  Oberste-Vorth, Ralph W.  {\it
H\'enon mappings in the complex domain. II. Projective and inductive
limits of polynomials}.  Real and complex dynamical systems (Hiller\o d, 1993),  89--132, NATO Adv. Sci. Inst. Ser. C Math. Phys. Sci., 464, Kluwer Acad. Publ., Dordrecht,  1995.

%  \bibitem{HPS} M. W. Hirsch; C. C. Pugh; M. Shub. {\it
%  Invariant manifolds.} Lecture Notes in Mathematics, Vol. 583.
%  Springer-Verlag, Berlin-New York,  1977. ii+149 pp.
%  \bibitem{ishii} Y. Ishii. {\it Hyperbolic polynomial
%  diffeomorphisms of $\C\sp 2$. I. A  non-planar map.}
%  Adv. Math.  218  (2008),  no. 2, 417--464.
%
%  \bibitem{jonsson} M. Jonnson. {\it Hyperbolic Dynamics of
%  Endomorphisms.} Preprint.
%  http://www.math.lsa.umich.edu/$\sim$\hspace{-3pt}
%  mattiasj/research/preprints/papers/hypend.html
%
%  \bibitem{K} M. Klimek. {\it Pluripotential theory}. London
%  Mathematical Society Monographs. New Series, 6. Oxford Science
%  Publications. The Clarendon Press, Oxford University Press, New
%  York,
%  1991. xiv+266 pp. ISBN: 0-19-853568-6.
%  \bibitem{KH} A. Katok; B. Hasselblatt. {\it Introduction to the
%  modern theory of dynamical systems. With a supplementary chapter by
%  Katok and Leonardo Mendoza.} Encyclopedia of Mathematics and its
%  Applications, 54. Cambridge University Press, Cambridge,  1995.
%  xviii+802 pp. ISBN: 0-521-34187-6.
%  \bibitem{L} M. Y. Lyubich. {\it Entropy of analytic
%  endomorphisms of the Riemann sphere.} (Russian)  Funktsional. Anal.
%  i
%  Prilozhen.  15  (1981),  no. 4, 83--84.
%
\bibitem{LV} O. Lehto; K. I. Virtanen. {\it  Quasiconformal mappings in the plane.} Second edition. Translated from the German by K. W. Lucas. Die Grundlehren der mathematischen Wissenschaften, Band 126. Springer-Verlag, New York-Heidelberg,  1973. viii+258 pp.

%  \bibitem{Ma} R. Ma\~n\'e. {\it A proof of the $C\sp 1$ stability
%  conjecture.}  Inst. Hautes Études Sci. Publ. Math.  No. 66  (1988),
%  161--210.

\bibitem{ma2} R. Ma\~n\'e. {\it Hyperbolicity, sinks and measure   in one-dimensional dynamics.} Comm. Math. Phys. 100 (1985), no. 4,   495--524.

\bibitem{M} J. Milnor. {\it Dynamics in one complex variable. Third edition.} Annals of Mathematics Studies, 160. Princeton University Press, Princeton, NJ,  2006. viii+304 pp.

%  \bibitem{ueda} S. Morosawa; Y. Nishimura; M. Taniguchi; T.
%  Ueda. {\it  Holomorphic dynamics.} Translated from the 1995
%  Japanese
%  original and revised by the authors. Cambridge Studies in Advanced
%  Mathematics, 66. Cambridge University Press, Cambridge,  2000.
%  xii+338
%  pp. ISBN: 0-521-66258-3.
%
\bibitem{P-RH} E. R. Pujals; F. Rodriguez Hertz. {\it
  Critical points for surface diffeomorphisms.} J. Mod. Dyn. 1 (2007),  no. 4, 615--648.

\bibitem{ps} E. R. Pujals; M. Sambarino. {\it Homoclinic tangencies and hyperbolicity for surface diffeomorphisms.} Annals of Mathematics, vol. 151, 2000, p. 961--1023.
%
%  \bibitem{ps} E. R. Pujals; M. Sambarino. {\it Homoclinic
%  tangencies and hyperbolicity for surface diffeomorphisms.}
%  Annals of Mathematics, vol. 151, 2000, p. 961–1023.
%
%  \bibitem{r} T. Ransford. {\it Potential theory in the complex
%  plane.} London Mathematical Society Student Texts, 28. Cambridge
%  University Press, Cambridge, 1995. x+232 pp. ISBN: 0-521-46120-0;
%  0-521-46654-7.

\bibitem{V} M. Viana. {\it Multiplicative ergodic theorem of Oseledets (after Ricardo Ma\~n\'e): A proof of Oseledets' Theorem.} Lecture Notes.

%  \bibitem{pesin} Y. Pesin. {\it Lectures on partial hyperbolicity
%  and
%  stable ergodicity.} European Mathematical Society (EMS), Z�rich,
%  2004.
%  vi+122 pp. ISBN: 3-03719-003-5.
%
%  \bibitem{pancho} F. Valenzuela. {\it Dominated Splitting and
%  Critical Sets for Polynomials Authomorphisms on $\Ctwo$.} Doctoral
%  Thesis, IMPA (2009).

\bibitem{pancho01}F. Valenzuela--Henriquez. {\it Equivalent conditions for hyperbolicity on partially hyperbolic holomorphic map.}  Indiana Univ. Math. J. Preprint.

\end{thebibliography}
\end{document}